\documentclass[11pt]{amsart}

\usepackage{amssymb,amsmath,accents}
\usepackage{bbm,pgf,tikz}
\usetikzlibrary{arrows,automata}
\usepackage{graphicx}
\usepackage{a4wide}

\usepackage[colorlinks=true, pdfstartview=FitV, linkcolor=blue, 
citecolor=blue]{hyperref}

\newtheorem{theorem}{Theorem}[section]
\newtheorem{prop}[theorem]{Proposition}
\newtheorem{lemma}[theorem]{Lemma}
\newtheorem{fact}[theorem]{Fact}
\newtheorem{coro}[theorem]{Corollary}
\theoremstyle{definition}
\newtheorem{definition}[theorem]{Definition}

\newtheorem{remark}[theorem]{Remark}

\newcommand{\ts}{\hspace{0.5pt}}
\newcommand{\nts}{\hspace{-0.5pt}}

\newcommand{\RR}{\mathbb{R}\ts}
\newcommand{\CC}{\mathbb{C}}
\newcommand{\NN}{\mathbb{N}}

\newcommand{\cA}{\mathcal{A}}
\newcommand{\cB}{\mathcal{B}}
\newcommand{\cC}{\mathcal{C}}
\newcommand{\cD}{\mathcal{D}}
\newcommand{\cE}{\mathcal{E}}

\newcommand{\cM}{\mathcal{M}}
\newcommand{\cP}{\mathcal{P}}
\newcommand{\pa}{\hphantom{g}\nts\nts}

\newcommand{\tiM}{\ts\widetilde{\nts\nts M}\nts\nts}
\newcommand{\whM}{\ts\widehat{\nts\nts M}\nts\nts}
\newcommand{\tir}{\tilde{r}}
\newcommand{\whr}{\hat{r}}
\newcommand{\fa}{\mathfrak{a}}
\newcommand{\fb}{\mathfrak{b}}
\newcommand{\fc}{\mathfrak{c}}

\newcommand{\ee}{\ts\mathrm{e}}
\newcommand{\one}{\mathbbm{1}}
\newcommand{\dm}{\ts\mathsf{d}}
\newcommand{\bell}{\ts\mathsf{B}}
\newcommand{\bd}{{\, \scriptstyle \boxdot \,}}
\newcommand{\trans}{{\scriptscriptstyle \mathsf{T}}}
\newcommand{\pmin}{\ts\ts\underline{\nts\nts 0\nts\nts}\ts\ts}
\newcommand{\pmax}{\ts\ts\underline{\nts\nts 1\nts\nts}\ts\ts}
\newcommand{\klein}{\overset{\raisebox{-2pt}{.}}{\prec}}
\newcommand{\al}{{\ts{\curvearrowleft}\ts\ts}}
\newcommand{\sm}{\ts{\setminus}\ts}

\newcommand{\udo}[1]{\underaccent{$\text{.}$}{#1\ts}\nts}
\newcommand{\exend}{\hfill$\Diamond$}

\newcommand{\card}{\mathrm{card}}
\newcommand{\Mat}{\mathrm{Mat}}

\newcommand{\defeq}{\mathrel{\mathop:}=}
\newcommand{\eqdef}{=\mathrel{\mathop:}}

\newcommand{\myfrac}[2]{\frac{\raisebox{-2pt}{$#1$}}
  {\raisebox{0.5pt}{$#2$}}}

\begin{document}

\title[Recombination and Markov embedding]
 {Recombination in discrete and continuous time\\[2mm]
  from the viewpoint of Markov embedding}

\author{Ellen Baake}
\address{Technische Fakult\"at, Universit\"at Bielefeld, 
         Postfach 100131, 33501 Bielefeld, Germany}
\email{ebaake@techfak.uni-bielefeld.de}

\author{Michael Baake}
\address{Fakult\"at f\"ur Mathematik, Universit\"at Bielefeld, 
         Postfach 100131, 33501 Bielefeld, Germany}
\email{mbaake@math.uni-bielefeld.de}
         
\author{Jeremy Sumner}         
\address{School of Natural Sciences, Discipline of Mathematics,
         University of Tasmania,
    \newline \indent Private Bag 37, Hobart, TAS 7001, Australia}
\email{Jeremy.Sumner@utas.edu.au}

\begin{abstract} 
  The classic recombination equation, both in discrete and in
  continuous time, can be solved in a way that derives from the Markov
  chain of a partitioning process. Here, we revisit this structure
  from the point of view of the Markov embedding problem.  In
  particular, we analyse when a discrete-time Markov matrix of
  recombination type can occur in a time-homogeneous Markov semigroup
  that is generated by a recombination rate matrix.  En route, we also
  show that such rate matrices (or Markov generators) generally do not
  form a matrix algebra, but span a real Lie algebra.
\end{abstract}

\keywords{Markov matrices and generators, recombination processes,
   embedding problem}
\subjclass{60J10, 60J27, 92D15, 15A30, 20F40}

\maketitle

\section{Introduction}

Recombination is an important genetic mechanism that mixes (or
reshuffles) the genetic material of different individuals from
generation to generation; it takes place in the course of sexual
reproduction, and is an important mechanism of
evolution. Traditionally, in the limit of large population sizes, it
was studied via the \emph{recombination equation}, which is a
well-known deterministic dynamical system from mathematical population
genetics \cite{Chris,Buerger,fast}. Some substantial progress was made
in recent years by a change of perspective on the process via
reversing time and looking back into the past. Then, recombination
implies that the genes of an individual are partitioned across its
parents, grandparents, grand-grandparents, and so on. The resulting
partitioning process is a Markov chain in discrete or continuous
time. Both play important roles in population genetics and its
applications \cite{Chris,Buerger,BS2012}, and they are also relevant
to phylogenetics \cite{GM96,SH,EMB}; we refer to \cite[Sec.~5.4]{HSW},
\cite[Secs.~3.3 and 8.4]{Durett} as well as \cite[Sec.~7.2.4]{Wakeley}
for background and general overviews.

The process is best known in the context of its graphical
representation, the \emph{ancestral recombination graph}, which goes
back to \cite{Hudson83,GM96}. Here, it contains both fragmentation and
coalescence events, which make it highly complex; it has been studied
intensely in recent years; compare \cite{Mano13,JFS15,EPB16,
  LM-PS21,Alberti24}, to name just a few. In the
law-of-large-numbers-regime, the process turns into one of pure
fragmentation, which is more accessible. It has been studied in both
discrete and continuous time; we refer to \cite{BvW14,fast,Martinez},
as well as to \cite{EMB} for a review with further references. A
question that has remained open so far is how the discrete-time and
the continuous-time versions are related. More precisely: Under which
conditions on the parameters can the discrete-time Markov chain be
embedded into the semigroup of an underlying continuous-time and
time-homogeneous Markov chain? This is a biologically relevant
instance of the classic Markov embedding problem
\cite{Elfving,King}. The goal of the paper is to fill this gap and to
study the relation between discrete-time and continuous-time
recombination in some detail.  \smallskip

The paper is organised as follows. In Section~\ref{sec:prelim}, we
collect some material and results on Markov embedding and on
partitions of finite sets, together with some additional background
results that we need. We then analyse the partitioning processes of
recombination in Section~\ref{sec:reco-part}, with some emphasis on
the underlying algebraic structure. This section also derives some
general results on the embedding problem for this class of processes.
We then tackle recombination for two and three sites explicitly in
Section~\ref{sec:2-3}, where the embedding problem is solved
completely, and in explicit form with a clear-cut
interpretation. These two cases are still fairly simple and do not
display the general structure, as we show for four sites in
Section~\ref{sec:4}. Here, we see the non-linear parameter dependence
kick in, as well as the absence of a matrix algebra
structure. Instead, we derive that the recombination generators span a
$14$-dimensional Lie algebra, and fully solve the embedding problem in
this case. The algebraic structure is then generalised to an arbitrary
number of sites in Section~\ref{sec:general}, where we also derive a
systematic hierarchy of invariant subspaces and a corresponding tensor
product structure. Further, we derive criteria that are more concrete
than those from Section~\ref{sec:reco-part}.

\section{Notions, preliminaries and preparatory results}\label{sec:prelim}

Let us introduce the notation for the various mathematical objects we
will use, and give some general results that we shall need later.

\subsection{Markov embedding}

Let us briefly introduce and summarise the Markov embedding problem.
A \emph{Markov matrix}
$M = (M^{}_{ij})^{}_{1\leqslant i,j \leqslant \dm}$ has non-negative
entries such that all row sums are $1$. A \emph{Markov generator}
$Q = (Q^{}_{ij} )^{}_{1\leqslant i,j \leqslant \dm}$ has non-negative
off-diagonal entries and zero row sums. Such a $Q$ is also known as a
\emph{rate matrix}; compare \cite{Norris}.  Rate matrices give rise to
time-homogeneous, Abelian \emph{Markov semigroups} of the form
$\{ \ee^{t \ts Q} : t \geqslant 0 \}$, all elements of which are
Markov matrices. Such semigroups always contain $\one$ and thus are
monoids.

A Markov matrix $M$ is called \emph{embeddable} when it satisfies
$M=\ee^Q$ for some rate matrix $Q$; see \cite{Elfving,King} for the
origins and \cite{BS1,BS2} for further background material and recent
results. If embeddable, $M$ occurs within a (time-homogeneous) Markov
semigroup.  If $Q$ is a Markov generator, then so is $\alpha \ts Q$
for any $\alpha >0$, and the chosen time scale is immaterial.  All
embeddable Markov matrices are infinitely divisible, because $M=\ee^Q$
has $\ee^{Q/n}$ as an $n$-th Markov root, for all $n\in\NN$.  The
embedding problem for $\dm=2$ was solved by Kendall, as stated in
\cite{King}; see \cite{BS3} for the general situation with
$\dm\leqslant 4$.  Let us recall Kendall's result.

\begin{fact}[Kendall]\label{fact:King}
  A Markov matrix\/ $M=\left(\begin{smallmatrix} 1-a & a \\ b & 1-b
      \end{smallmatrix}\right)$, where\/ $a,b \in [0,1]$,
    is embeddable if and only if\/ $0 < \det (M) = 1-a-b \leqslant 1$.
    Equivalently, this is true if and only if\/ $0\leqslant a+b < 1$.
    In this case, the embedding\/ $M=\ee^Q$ is unique, with the rate
    matrix\/ $\ts Q= - \frac{\log (1-a-b)}{a+b} (M \nts -\one)$, which
    includes the case\/ $Q=0$ for\/ $a=b=0$.  \qed
\end{fact}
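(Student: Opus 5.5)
The plan is to use the fact that every $2\times 2$ rate matrix is a nonnegative multiple of a single normal form, so the exponential can be computed in closed form. Write a general generator as $Q=\left(\begin{smallmatrix} -\alpha & \alpha \\ \beta & -\beta \end{smallmatrix}\right)$ with $\alpha,\beta\geqslant 0$, and set $s=\alpha+\beta$.

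\emph{Step 1: exponential of a generator.} Since $Q^2=-s\,Q$, one has $Q^n=(-s)^{n-1}Q$ for $n\geqslant 1$. Hence
\[
  \ee^{Q} \, = \, \one + \frac{1-\ee^{-s}}{s}\, Q ,
\]
with the quotient read as $1$ when $s=0$. Consequently $\ee^Q$ is a Markov matrix of the stated form with
\[
  a=\tfrac{1-\ee^{-s}}{s}\,\alpha, \qquad b=\tfrac{1-\ee^{-s}}{s}\,\beta, \qquad a+b=1-\ee^{-s}\in[0,1).
\]

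\emph{Step 2: necessity.} If $M=\ee^Q$, then Step 1 gives $1-a-b=\ee^{-s}\in(0,1]$. Alternatively, $\det(M)=\ee^{\operatorname{tr} Q}$ is positive, and $\det(M)\leqslant 1$ holds automatically because $a,b\geqslant 0$.

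\emph{Step 3: sufficiency and explicit formula.} Assume $0\leqslant a+b<1$. If $a=b=0$, then $M=\one=\ee^{0}$. Otherwise set $s=-\log(1-a-b)>0$ and
\[
  Q \, = \, \frac{s}{a+b}\,(M-\one).
\]
This is a rate matrix, since $M-\one$ has off-diagonal entries $a,b\geqslant 0$ and zero row sums. Its parameters satisfy $\alpha+\beta=s$, so Step 1 gives
\[
  \ee^Q=\one+\frac{1-\ee^{-s}}{s}\cdot\frac{s}{a+b}\,(M-\one)=M,
\]
because $1-\ee^{-s}=a+b$. This is exactly the formula in Fact~\ref{fact:King}.

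\emph{Step 4: uniqueness.} This is the only step needing care. Suppose $M=\ee^{Q'}$ for some other rate matrix $Q'$ with parameters $\alpha',\beta'$ and sum $s'$. By Step 1, $a+b=1-\ee^{-s'}$, so $s'=-\log(1-a-b)=s$, since $s\mapsto 1-\ee^{-s}$ is injective on $[0,\infty)$. Step 1 also gives $M-\one=\frac{1-\ee^{-s}}{s}\,Q'$, which forces $Q'=\frac{s}{a+b}(M-\one)=Q$. The degenerate case $s'=0$ forces $Q'=0$ and $M=\one$.

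The key point is that every rate matrix has the form used in Step 1. So, unlike the general matrix-logarithm branch issue, no non-real or non-principal logarithm can arise here.
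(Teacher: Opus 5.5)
Your proof is correct and complete. The paper gives no proof of this statement: it is quoted as Kendall's result (via Kingman) and closed with \qed. The natural comparison is therefore with the route the paper uses for its own embedding results, namely computing the principal logarithm and invoking Fact~\ref{fact:Culver} for uniqueness.

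In that route one notes that $A = M-\one$ satisfies $A^2 = -(a+b)A$. The series $\log(\one + A) = \sum_{m\geqslant 1} \frac{(-1)^{m-1}}{m} A^m$ then collapses to $-\frac{\log(1-a-b)}{a+b}\, A$. This is a positive multiple of the rate matrix $A$, hence automatically a generator. Uniqueness follows because the eigenvalues $1$ and $1-a-b$ are distinct and positive when $a+b>0$.

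You run the same identity in the opposite direction. You parametrise all generators, use $Q^2 = -sQ$ to get $\ee^Q$ in closed form, and invert. The case $\beta = 0$ of your formula is exactly the paper's two-site computation before \eqref{eq:Q-2-sites}. This makes your argument self-contained, with no logarithm series and no Culver's theorem, and it treats $M=\one$ on the same footing as the other cases. The Culver route, in turn, gives the stronger fact that for $a+b>0$ the matrix $M$ has only one \emph{real} logarithm at all, but it must handle $M=\one$ separately.

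On that point, your final sentence overstates slightly: $\one$ has further real logarithms, e.g.\ $\left(\begin{smallmatrix} 0 & 2\pi \\ -2\pi & 0 \end{smallmatrix}\right)$. What your Step~4 actually shows, and all the statement needs, is that $Q=0$ is the only one of them that is a rate matrix.
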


\begin{figure}
\centerline{\includegraphics[width=0.6\textwidth]{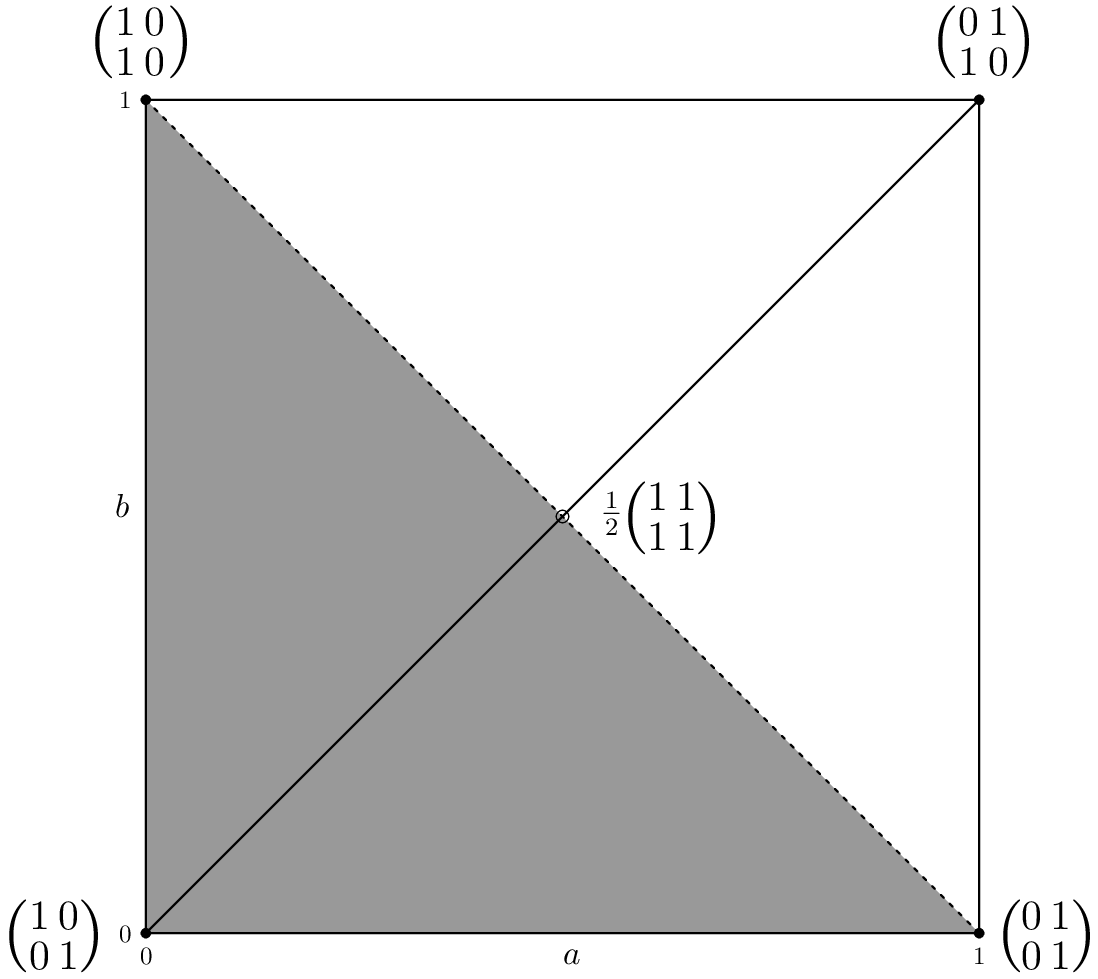}}
\caption{Graphical representation of Kendall's general embedding
  result for $2\ts{\times}\ts 2$ Markov matrices; see text for
  details. \label{fig:2d} }
\end{figure}

The situation is transparently summarised in Figure~\ref{fig:2d},
where the convex set $\cM^{}_{2}$ of all Markov matrices for $\dm=2$
forms the (closed) square, with the extremal ones as corners. The grey
triangle, excluding the dashed line, comprises the embeddable
ones. The matrices on the excluded line are the non-trivial
idempotents in $\cM_2$, which are infinitely divisible but singular;
see \cite{BS2} for more on the role of idempotents.

More generally, since $\det (\ee^Q) = \ee^{\mathrm{tr} (Q)}$, an
embedding of $M$ is only possible if $\det (M) > 0$, which is tied to
the existence of a real matrix logarithm of $M$. The latter property,
and conditions for uniqueness, was investigated by Culver
\cite{Culver}, and can be summarised via the (complex) \emph{Jordan
  normal form} (JNF) of $M$ as follows, which simplifies to
diagonalisable matrices in an obvious way.
  
\begin{fact}[Culver]\label{fact:Culver}
  A matrix\/ $B\in\Mat (d,\RR)$ has a real logarithm if and only if
  the following two conditions are satisfied.
\begin{enumerate}\itemsep=2pt
\item[(C1)] The matrix\/ $B$ is non-singular.
\item[(C2)] Each elementary Jordan block of the JNF of\/ $B$ that
  belongs to an eigenvalue on the negative real axis occurs with even
  multiplicity.
\end{enumerate}   
Further, the real logarithm of\/ $B$ is unique if and only if all
eigenvalues of\/ $B$ are positive real numbers and no elementary
Jordan block of\/ $B$ occurs more than once.  \qed
\end{fact}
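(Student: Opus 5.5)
The approach is to reduce to the real Jordan normal form of $B$ and to build logarithms block by block. Existence and uniqueness are treated separately.

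\emph{Necessity of (C1) and (C2).} If $B=\ee^{A}$ with $A$ real, then $\det(B)=\ee^{\mathrm{tr}(A)}>0$, so (C1) holds. For (C2), decompose $A$ over $\CC$ into its Jordan blocks. For an eigenvalue $\mu$ of $A$, the map $\exp$ sends an elementary Jordan block $J_k(\mu)$ to a matrix similar to $J_k(\ee^{\mu})$. This holds because $\ee^{\mu}\exp(N)$, with $N$ nilpotent of rank $k-1$, has $\exp(N)-\one$ nilpotent of the same rank. Hence the JNF of $B$ arises block-wise from that of $A$, with block sizes preserved. Now suppose $\ee^{\mu}<0$. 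Then $\mu$ is non-real, with imaginary part an odd multiple of $\pi$. Since $A$ is real, non-real Jordan blocks of $A$ come in conjugate pairs $J_k(\mu), J_k(\bar\mu)$, and both map to $J_k(\ee^{\mu})$. So every Jordan block of $B$ at a negative eigenvalue occurs an even number of times.

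\emph{Sufficiency.} Pass to the real JNF of $B$. There are three kinds of real block, and each needs a real logarithm.
\begin{itemize}
\item A block $J_k(\lambda)$ with $\lambda>0$. Take $\log\lambda\cdot\one+\log(\one+N/\lambda)$, where the second term is the terminating power series.
\item A real block for a pair $\lambda,\bar\lambda\notin\RR$. This has the form $R\otimes\one+\one_2\otimes N$ with $R$ a rotation-scaling $2\times 2$ matrix. Take $L\otimes\one+\log(\one+(R^{-1}\otimes N))$, where $L$ is a real logarithm of $R$, which exists since $R=r\,\mathrm{rot}(\theta)$.
\item A pair of equal blocks $J_k(\lambda)\oplus J_k(\lambda)$ with $\lambda<0$. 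Rewrite it as $\lambda\,\one_2\otimes(\one+N/\lambda)$, and use that $\lambda\one_2 = |\lambda|\,\mathrm{rot}(\pi)=\exp\bigl(\log|\lambda|\one_2+\pi\bigl(\begin{smallmatrix}0&-1\\1&0\end{smallmatrix}\bigr)\bigr)$. The two factors commute, so their logarithms add.
\end{itemize}
The negative case is exactly where (C2) is used. Conjugating back by the real similarity gives a real logarithm of $B$.

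\emph{Uniqueness.} First suppose $B$ has a non-real or negative eigenvalue. Then the logarithm constructed above contains a $2\times2$ rotation part, and adding $2\pi$ to its angle in one block gives a different real logarithm, so uniqueness fails. Next suppose some Jordan block $J$ of $B$ repeats, say $J\oplus J$ with $\lambda>0$. Then $\log(J)\otimes\one_2+\one_k\otimes 2\pi\bigl(\begin{smallmatrix}0&-1\\1&0\end{smallmatrix}\bigr)$ is another real logarithm, because the two summands commute. Conversely, assume all eigenvalues are positive and each block occurs only once. Any real logarithm $A$ commutes with $B=\ee^A$. Its eigenvalues $\mu$ satisfy $\ee^{\mu}>0$, and for real $A$ any non-real eigenvalue would pair with a conjugate.

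This converse is the main obstacle. The step to carry out first is to show that $A$ is a polynomial in $B$. When no block repeats, $B$ is non-derogatory, so its commutant consists of polynomials in $B$. Hence $A$ preserves each generalised eigenspace $V_\lambda$, which carries a single block. On $V_\lambda$, the matrix $A$ commutes with the nonderogatory restriction of $B$, so it is upper triangular Toeplitz with a single eigenvalue $\mu$. Since $A$ is real, the eigenvalues of $A$ on the real subspace $V_\lambda$ come in conjugate pairs, and a single eigenvalue forces $\mu$ real. Then $\mu=\log\lambda$, and on $V_\lambda$ the matrix $A$ equals the principal logarithm series $\log\lambda\cdot\one+\log(\one+N/\lambda)$. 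So $A$ is uniquely determined.
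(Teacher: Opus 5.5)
The paper gives no proof of this statement (it is quoted from Culver with a \qed), so your argument has to stand on its own. Necessity, the block-by-block existence construction and the two non-uniqueness constructions are sound, up to harmless Kronecker-ordering conventions.

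The gap is in the converse of the uniqueness claim. You assert that when no elementary Jordan block repeats, $B$ is non-derogatory. From this you conclude that each generalised eigenspace $V_\lambda$ carries a single block and that the commutant of $B$ consists of polynomials in $B$. This is false. The hypothesis only forbids two blocks of the \emph{same} size for the same eigenvalue.

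For example, let $E_{ij}$ denote the matrix units and take $B=J_1(1)\oplus J_2(1)=\one+E_{23}\in\Mat(3,\RR)$. No block repeats, yet the minimal polynomial $(x-1)^2$ has degree $2<3$. The commutant contains $E_{13}$, which is not of the form $a\one+bE_{23}$, so it is not a polynomial in $B$. This more general case is exactly what the paper needs: Remark~\ref{rem:cyclic-1} stresses that it is strictly weaker than cyclicity, and Theorem~\ref{thm:RMS-RRS} relies on it. As written, you have only proved uniqueness for cyclic $B$.

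The repair uses nothing beyond your own necessity step. Let $A$ be any real logarithm of $B$. The Jordan blocks of $B$ are the $J_k(\ee^{\mu})$ arising from the blocks $J_k(\mu)$ of $A$. Suppose $A$ had a non-real eigenvalue $\mu=\log\lambda+2\pi i m$ with $m\neq 0$. Since $A$ is real, a block $J_k(\mu)$ would come with a block $J_k(\bar\mu)$ of the same size. Both are sent to $J_k(\lambda)$, producing a repeated block of $B$, contrary to assumption. Hence $\sigma(A)\subset\RR$.

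Next, $A$ preserves each generalised eigenspace $V_\lambda$ of $B$, because $[A,B]=0$. On it, $A|_{V_\lambda}=\log(\lambda)\one+N_A$ with $N_A$ nilpotent. Then $B|_{V_\lambda}=\lambda\exp(N_A)$ forces $N_A=\log\bigl(\lambda^{-1}B|_{V_\lambda}\bigr)$, computed by the terminating series. So $A$ is the principal logarithm and is unique, with no commutant or polynomial argument needed.
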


Let us also recall that non-uniqueness of the real logarithm emerges
either from the existence of a complex-conjugate pair of eigenvalues
(giving a countably infinite set of solutions) or from the presence of
more than one copy of an elementary Jordan block (resulting in an
uncountable set of solutions); see \cite{Culver} for details. However,
uniqueness can be restored by some additional constraints, one case of
which will become important to us later. Employing
\cite[Sec.~12.4]{LT}, we explain this in some detail, as it is less
standard in the literature.

Let $J^{}_{k} = \lambda \one^{}_{k} + N^{}_{k}$ with $k\geqslant 2$ be
an elementary (upper-triangular) Jordan block with $\lambda > 0$,
where $N^{}_{k}$ is the nilpotent matrix with $1$s on the first
superdiagonal and $0\ts$s everywhere else. It is nilpotent of index
(or degree) $k$, so $N^k_k = 0$ and $N^m_k \ne 0$ for
$0\leqslant m < k$.  By Fact~\ref{fact:Culver}, $J^{}_{k}$ has a
unique real logarithm, which is given by the principal logarithm
\begin{equation}\label{eq:J-log}
  L^{}_{k} \, = \, \log (J^{}_{k}) \, = \, \log (\lambda) \one^{}_{k} + 
  \sum_{m\geqslant 1} \myfrac{(-1)^{m-1}}{m \ts \lambda^m} 
  N^{m}_{k} \, = \, \log (\lambda) \one^{}_{k} + 
  \sum_{m = 1}^{k-1} \myfrac{(-1)^{m-1}}{m \ts \lambda^m} 
  N^{m}_{k}  ,
\end{equation}
where the last step follows because $N^m_k = 0$ for all
$m\geqslant k$. Let us now consider two elementary Jordan blocks with
the same $\lambda > 0$, say $J^{}_{k}$ and $J^{}_{\ell}$, with
possibly different $k,\ell\in\NN$, and the block-diagonal matrix
$J^{}_{k} \oplus J^{}_{\ell}$. When $k\ne \ell$, its real logarithm is
unique, but we include this case for reasons that will become clear
shortly.

Now, we look for upper-triangular block matrices
$R=\left( \begin{smallmatrix} \alpha & \gamma \\ 0 &
    \beta \end{smallmatrix} \right)$ such that
$\ee^R = J^{}_{k} \oplus J^{}_{\ell}$.  Due to the block structure,
this forces $\ee^{\alpha} = J^{}_{k}$ and $\ee^{\beta} = J^{}_{\ell}$
and thus $\alpha = L^{}_{k}$ and $\beta = L^{}_{\ell}$ from
Eq.~\eqref{eq:J-log}.  Now, we have $[ \ee^R, R ]=0$, which gives one
non-trivial constraint, namely $J^{}_{k} \gamma = \gamma J^{}_{\ell}$
or, equivalently, $N^{}_{k} \gamma = \gamma N^{}_{\ell}$. Since the
same commutation relation holds for powers of the nilpotent matrices,
we must also have $ L^{}_{k} \gamma = \gamma L^{}_{\ell}$. Calculating
the exponential of $R$ gives
\begin{equation}\label{eq:J-sum}
    \ee^R \, = \, \exp \begin{pmatrix} L^{}_{k} & \gamma \\
      0 & L^{}_{\ell} \end{pmatrix} \, = \, \begin{pmatrix}
      J^{}_{k} & C \\ 0 & J^{}_{\ell} \end{pmatrix} ,
\end{equation}
where the (generally rectangular) matrix $C$ reads
\[
    C \, = \sum_{n=1}^{\infty} \myfrac{1}{n \ts !} \sum_{m=0}^{n-1}
    L^{m}_{k} \gamma L^{n-1-m}_{\ell} \, =  \sum_{n=1}^{\infty}
    \myfrac{n}{n\ts !} \ts L^{n-1}_{k} \gamma \, = \, \exp (L^{}_{k}) 
    \ts \gamma \ts .
\]
Note that the penultimate step uses the commutation relation derived
previously. Now, due to $\ee^R = J^{}_{k} \oplus J^{}_{\ell}$, we must
have $C=0$ in \eqref{eq:J-sum}, which implies
$\gamma = \exp (-L^{}_{k}) 0 = 0$, and we see that the restriction to
upper-triangular block matrices confirms uniqueness of the real
logarithm for $k\ne \ell$ and restores it for $k=\ell$. This has the
following important consequence.

\begin{lemma}\label{lem:J-block}
  Let\/ $J^{}_{k_1}, \ldots , J^{}_{k_r}$ be\/ $r$ elementary Jordan
  blocks, all for the same\/ $\lambda >0$. Then, the upper-triangular
  block matrix\/ $B=J^{}_{k_1} \!\oplus \cdots \oplus J^{}_{k_r}$ has
  precisely one real logarithm with upper-triangular block form,
  namely\/ $R = L^{}_{k_1} \!\oplus \cdots \oplus L^{}_{k_r}$.
\end{lemma}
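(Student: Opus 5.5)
Existence is immediate. By \eqref{eq:J-log}, each $L^{}_{k_i}$ satisfies $\ee^{L^{}_{k_i}} = J^{}_{k_i}$. Hence $R = L^{}_{k_1}\oplus\cdots\oplus L^{}_{k_r}$ is a real matrix with $\ee^R = B$. Being block diagonal, it is in particular of upper-triangular block form.

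For uniqueness, I will proceed by induction on $r$, with the case $r=2$ being exactly the argument given before the statement. The case $r=1$ is the uniqueness of the real logarithm of $J^{}_{k}$ from Fact~\ref{fact:Culver}, since $\lambda>0$ and there is a single Jordan block.

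For the induction step, let $R$ be a real matrix in upper-triangular block form, with respect to the partition $(k_1,\ldots,k_r)$, such that $\ee^R = B$. Group the blocks as $B = B' \oplus J^{}_{k_r}$ with $B' = J^{}_{k_1}\oplus\cdots\oplus J^{}_{k_{r-1}}$. Correspondingly, write $R = \left(\begin{smallmatrix} \alpha & \gamma \\ 0 & \beta\end{smallmatrix}\right)$, where $\alpha$ is itself of upper-triangular block form with respect to $(k_1,\ldots,k_{r-1})$. Since exponentials of block upper-triangular matrices have diagonal blocks equal to the exponentials of the diagonal blocks, we get $\ee^{\alpha}=B'$ and $\ee^{\beta}=J^{}_{k_r}$. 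The induction hypothesis then gives $\alpha = L' \defeq L^{}_{k_1}\oplus\cdots\oplus L^{}_{k_{r-1}}$, and the case $r=1$ gives $\beta = L^{}_{k_r}$.

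It remains to show that $\gamma=0$, and here I redo the computation before the lemma with $J^{}_k$ replaced by $B'$. From $[\ee^R,R]=0$, the off-diagonal block yields $B'\gamma = \gamma J^{}_{k_r}$. Since $B' = \lambda\one + N'$ with $N'$ nilpotent, this is equivalent to $N'\gamma = \gamma N^{}_{k_r}$, and hence $p(N')\gamma = \gamma\ts p(N^{}_{k_r})$ for every polynomial $p$. Now $L'$ and $L^{}_{k_r}$ are the \emph{same} polynomial in $N'$ and $N^{}_{k_r}$ respectively, namely $\log(\lambda) + \sum_{m\geqslant 1}\frac{(-1)^{m-1}}{m\lambda^m}N^m$, truncated at a common degree $\max k_i$. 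This common form is the point that requires care, and it holds precisely because all blocks share the same $\lambda$. It follows that $L'\gamma = \gamma L^{}_{k_r}$. The series for the upper-right block of $\ee^R$ then collapses exactly as before to $C = \ee^{L'}\gamma = B'\gamma$. Since $\ee^R=B$ forces $C=0$, and $B'$ is invertible because $\lambda>0$, we conclude $\gamma=0$. Therefore $R = L'\oplus L^{}_{k_r}$, which completes the induction.
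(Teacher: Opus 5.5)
Your proof is correct, and it rests on the same computation the paper uses. The relation $[\ee^R,R]=0$ gives the intertwining relation for the off-diagonal block $\gamma$. This transfers to the logarithms because they are polynomials in the nilpotent parts. The off-diagonal block of $\ee^R$ then collapses to $\ee^{\alpha}\gamma$.

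Where you differ is in how the induction is organised. The paper first fixes all diagonal blocks as the $L^{}_{k_i}$. It then clears the off-diagonal blocks one superdiagonal at a time, always applying the two-block argument to a pair of single Jordan blocks $J^{}_{k}\oplus J^{}_{\ell}$: neighbouring pairs first, then next-to-nearest ones, and so on. From the second step on, this tacitly uses that a non-adjacent pair of blocks decouples from the blocks between them once the intermediate superdiagonals vanish.

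You instead induct on $r$ and remove the whole last block column at once, by running the two-block argument with the direct sum $B'$ in the upper-left corner. This needs exactly the point you flagged: $L'$ and $L^{}_{k_r}$ must be the same polynomial in $N'$ and $N^{}_{k_r}$, which is where the common $\lambda$ enters. In return, your induction is self-contained and needs no decoupling step.

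Your version also assumes only that $R$ is block upper triangular with respect to $(k_1,\ldots,k_r)$, whereas the paper starts from a fully upper-triangular $R$. Both arguments pin down the diagonal blocks via the uniqueness of the real logarithm of a single Jordan block, so this extra generality costs you nothing.
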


\begin{proof}
  If $B = \ee^R$ with $R$ upper triangular, $R$ can be structured in
  blocks that fit the sizes given by the $J^{}_{k_i}$ in $B$, and the
  diagonal blocks of $R$ must be the $L^{}_{k_i}$ from
  \eqref{eq:J-log}. Next, consider any neighbouring pair
  $L^{}_{k_i}\oplus L^{}_{k_{i+1}}$. Using our above argument shows
  that the block just above it must be $0$, thus implying that all
  blocks on the first superdiagonal must vanish.  This allows us to
  employ our argument to next-to-nearest neighbour pairs, and thus to
  conclude that all blocks in the second superdiagonal must vanish as
  well. Inductively, we then get that all blocks above the diagonal
  ones must be $0$, and the claimed uniqueness is established.
\end{proof}

To continue, let $\cM_{\dm}$ denote the set of all Markov matrices in
dimension $\dm$.  We observe that the upper-triangular Markov matrices
form a closed, convex subset of $\cM_{\dm}$, both also being closed
under matrix multiplication.  Likewise, the upper-triangular Markov
generators lie within the algebra of all upper-triangular matrices
with zero row sums. At this point, we state one general result on the
existence of real matrix logarithms as follows.

\begin{lemma}\label{lem:real-log-exists}
  {\nts}Let\/ $M \ts {\in} \ts\ts \cM_{\dm}$ be a non-singular,
  upper-triangular Markov matrix with simple spectrum. Then, all
  eigenvalues of\/ $M$ lie in the half-open interval\/ $(0,1]$.
   
  Moreover, in this case, $M$ has a unique real logarithm,
  $L = \log (M)$. This\/ $L$ has zero row sums and is the principal
  matrix logarithm of $\ts M$, as given by the convergent series
\[
  L \, = \, \log (\one + A) \, = \sum_{m=1}^{\infty}
  \myfrac{(-1)^{m-1}}{m} A^m ,
\]   
where\/ $A = M \nts -\one$ has spectral radius\/ $\varrho^{}_{A} <
1$. In particular, $L$ is upper triangular as well.
\end{lemma}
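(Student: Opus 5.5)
Since $M$ is upper triangular, its eigenvalues are exactly its diagonal entries $M_{ii}$, and each lies in $[0,1]$ because $M$ is Markov (non-negative entries, row sums $1$). Non-singularity of $M$ rules out the value $0$, so every eigenvalue lies in $(0,1]$. This settles the first claim directly; simplicity of the spectrum is not even needed for it.

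For the second part, I set $A = M - \one$. This matrix is upper triangular with diagonal entries $M_{ii}-1 \in (-1,0]$, so its spectrum lies in $(-1,0]$ and $\varrho^{}_{A} < 1$. Consequently the power series $\sum_{m\geqslant 1} \frac{(-1)^{m-1}}{m} A^m$ converges absolutely. One way to see this is that $\|A^m\|^{1/m} \to \varrho^{}_A$ by Gelfand's formula, so the terms decay geometrically for large $m$. The sum $L$ is real, and it is the principal logarithm, so $\ee^L = \one + A = M$ by the standard identity of formal power series $\exp(\log(1+x)) = 1+x$, which is valid within the disc of convergence. Each power $A^m$ is upper triangular, and the upper-triangular matrices form a closed algebra, so $L$ is upper triangular as well.

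Zero row sums follow from $A\ts\mathbf{1} = M\mathbf{1} - \mathbf{1} = 0$, where $\mathbf{1}$ is the all-ones column vector. Then $A^m \mathbf{1} = 0$ for all $m\geqslant 1$, and passing to the convergent sum gives $L\ts\mathbf{1}=0$.

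Uniqueness of the real logarithm comes from Fact~\ref{fact:Culver}. The spectrum is simple and consists of positive reals, so every elementary Jordan block has size $1$ and occurs for a distinct eigenvalue; in particular $M$ is diagonalisable, and no Jordan block appears more than once. Culver's criterion then says the real logarithm is unique, and it must therefore coincide with the series $L$ constructed above. The step needing the most care is the convergence claim for a non-normal $A$: the spectral radius bound has to be turned into actual convergence of the series, via Gelfand's formula or the holomorphic functional calculus, together with $\ee^{\log(\one+A)}=\one+A$. Both are standard facts.
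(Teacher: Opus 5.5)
Your proposal is correct and follows the paper's proof essentially step by step. The shared steps are: the eigenvalues are the diagonal entries and lie in $(0,1]$; Culver's theorem gives uniqueness of the real logarithm; $\varrho^{}_{A}<1$ gives convergence of the principal-logarithm series; and upper triangularity and zero row sums survive the limit. Your extra details (Gelfand's formula, and $A\mathbf{1}=0$ implying $L\mathbf{1}=0$) only make explicit what the paper calls a standard continuity argument.
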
   

\begin{proof}
  Since $M$ is upper triangular, its eigenvalues are the diagonal
  elements.  They all lie in $(0,1]$ because $M$ is Markov and
  non-singular, so $0 < M_{ii} \leqslant 1$ for all
  $1\leqslant i \leqslant d$. Simple spectrum means distinct
  eigenvalues, and Culver's theorem (Fact~\ref{fact:Culver}) implies
  that $M$ possesses a unique real logarithm; see
  \cite[Sec.~2.3]{Higham} or \cite{BS2} for details.

  All eigenvalues of $A$ lie in $(-1,0]$, which implies
  $\varrho^{}_{A} < 1$ and thus the convergence of the series, which
  gives the principal matrix logarithm \cite{HJ,Higham}.  The last
  claim follows because $A$ and all its powers $A^m$ with $m\in\NN$
  are upper triangular with zero row sums, a property which is
  preserved in the limit, as follows from a standard continuity
  argument.  Consequently, $A$ is an element of the matrix algebra
  mentioned above.
\end{proof}

Let us pause to show a better way to calculate $L$, under the
conditions of Lemma~\ref{lem:real-log-exists}. Since $M$ is Markov,
with $1$ being an eigenvalue, $A=M\nts - \one$ is a rate matrix (but
\emph{not} the Markov generator we are after). As such, it lies in the
matrix algebra
\begin{equation}\label{eq:def-A0}
   \cA^{(\dm)}_{\, 0} \, \defeq \, \{ B \in \Mat (\dm ,\RR) :
   \text{ all row sums of $B$ are $0$} \} \ts ,
\end{equation}
which is non-unital, because it neither contains $\one$ nor any other
two-sided unit. Clearly, $\cA^{(\dm)}_{\, 0}$ contains the algebra of
upper-triangular matrices with zero row sums mentioned above.  We have
$L \in \cA^{(\dm)}_{\, 0}$, though $L$ need not be a Markov generator
(we shall shortly see examples where this happens). Still, $M=\ee^L$
means that we can use the \emph{spectral mapping theorem} (SMT). If
$\sigma (B)$ denotes the spectrum of a matrix $B$, the SMT states that
the spectra satisfy the relation $\sigma (\ee^L) = \ee^{\sigma (L)}$,
including multiplicities; see \cite[Thm.~9.4.6]{LT}.  If
$\lambda^{}_{0}, \lambda^{}_{1}, \ldots , \lambda^{}_{\dm-1}$ are the
(positive) eigenvalues of $M$, with $\lambda^{}_{0} = 1$ say,
$\mu^{}_{i} = \lambda^{}_{i} - 1$ are those of $A$, then with
$\mu^{}_{0} = 0$ and $\mu^{}_{i} < 0$ for all
$1\leqslant i \leqslant \dm -1$ because $L$ has simple spectrum. The
matrix $A$ then has a characteristic polynomial of degree $\dm$, say
$P(z)$, with $z$ occurring as one linear factor. The Cayley--Hamilton
theorem then implies that our matrix $L$ from
Lemma~\ref{lem:real-log-exists} satisfies
\[
    L \in \langle A, A^2, \ldots , A^{\dm -1} \rangle^{}_{\RR} \ts ,
\]
which denotes the (non-unital) subalgebra of $\cA^{(\dm)}_{\, 0}$
generated by $A$.

We thus know that $L$ must satisfy
\begin{equation}\label{eq:real-log}
    L \, = \sum_{i=1}^{\dm -1} \alpha^{}_{i} \ts A^{i} ,
\end{equation}
with real coefficients $\alpha^{}_{i}$. The SMT then leads to the
$\dm -1$ equations
\[
    \log (\lambda^{}_{j} ) \, = \sum_{i=1}^{\dm -1}
    \alpha^{}_{i} \, \mu^{\ts i}_{j}
\]
with $1\leqslant j \leqslant \dm -1$ (as $\log (\lambda^{}_{0})=0$
is automatic) and $\mu^{}_{j} = \lambda^{}_{j} -1$. In matrix form,
this reads
\[
    \begin{pmatrix} \log (\lambda^{}_{1}) \\ \vdots \\
    \log (\lambda^{}_{\dm -1}) \end{pmatrix} \, = \, \begin{pmatrix} 
    \mu^{}_{1} & \mu^{2}_{1} & \cdots & \mu^{\dm -1}_{1} \\
    \mu^{}_{2} & \mu^{2}_{2} & \ldots & \mu^{\dm -1}_{2} \\
    \vdots & \vdots & \ddots & \vdots \\ 
      \mu^{}_{\dm -1} & \mu^{2}_{\dm -1}
      & \ldots & \mu^{\dm -1}_{\dm -1} \end{pmatrix}
      \begin{pmatrix} \alpha^{}_{1} \\ \vdots \\ \alpha^{}_{\dm -1}
      \end{pmatrix} \, = \, V  \begin{pmatrix} 
      \alpha^{}_{1} \\ \vdots \\ \alpha^{}_{\dm -1} \end{pmatrix} ,
\]
where $V$ is a simple variant of the Vandermonde matrix, 
compare \cite[Sec.~0.9.11]{HJ}, with
\[
    \det (V) \, = \, \prod_{i=1}^{\dm -1} \mu^{}_{i} \,
    \prod_{1\leqslant k < \ell \leqslant \dm -1} 
    ( \mu^{}_{\ell} - \mu^{}_{k}) \, \ne \, 0 \ts ,
\]
because $A$ has simple spectrum and only $\mu^{}_{0}=0$.  So, we can
extract the $\alpha^{}_{i}$ by applying $V^{-1}$ to the vector of
logarithms; see \cite{BS2} for an explicit formula for $V^{-1}$ in the
version needed here.

Having calculated $L$, which is the only real logarithm of $M$ under
the assumptions of Lemma~\ref{lem:real-log-exists}, we get the
following general result.

\begin{theorem}\label{thm:real-log-and-gen}
  Under the assumptions of Lemma~$\ref{lem:real-log-exists}$, the
  Markov matrix\/ $M$ has the unique real logarithm\/ $L$ as given in
  \eqref{eq:real-log}, with the coefficients\/ $\alpha^{}_{i}\in\RR$
  as derived above.
   
  Then, $M$ is embeddable if and only if the matrix\/ $L$ is a Markov
  generator.  \qed
\end{theorem}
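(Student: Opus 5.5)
The plan has two parts. First, identify the unique real logarithm with the polynomial expression \eqref{eq:real-log}. Second, observe that embeddability then reduces to checking the generator property for this single candidate.

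For the first part, Lemma~\ref{lem:real-log-exists} already gives that $M$ has a unique real logarithm $L=\log(\one+A)$, the principal one, with zero row sums. It remains to see that $L$ coincides with $\sum_{i=1}^{\dm-1}\alpha_i A^i$ for the coefficients obtained from $V$. I would argue as follows. Since $A$ has simple spectrum $\{\mu_0=0,\mu_1,\ldots,\mu_{\dm-1}\}$, it is diagonalisable. Then $\log(\one+A)=f(A)$ for any polynomial $f$ interpolating $\mu\mapsto\log(1+\mu)$ on the spectrum of $A$. The polynomial $f(z)=\sum_{i=1}^{\dm-1}\alpha_i z^i$ satisfies $f(0)=0=\log(\lambda_0)$ automatically. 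It also satisfies $f(\mu_j)=\log(\lambda_j)$ for $1\leqslant j\leqslant \dm-1$ precisely by the Vandermonde-type system, which is uniquely solvable because $\det(V)\neq 0$. Hence $f(A)=L$. Alternatively, one checks directly that $\ee^{f(A)}$ and $M$ agree on each eigenvector of $A$ (the eigenvectors form a basis), so $f(A)$ is a real logarithm of $M$, and uniqueness from Lemma~\ref{lem:real-log-exists} gives $f(A)=L$.

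For the embedding equivalence, one direction is immediate. If $L$ is a Markov generator, then $M=\ee^L$ shows that $M$ is embeddable by definition. Conversely, suppose $M=\ee^Q$ for some rate matrix $Q$. Then $Q$ is a real matrix with $\ee^Q=M$, that is, a real logarithm of $M$. By the uniqueness part of Lemma~\ref{lem:real-log-exists} (ultimately Culver's criterion, Fact~\ref{fact:Culver}, which applies since the spectrum is positive and simple), $Q=L$. So $L$ must be a Markov generator.

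I do not expect a serious obstacle. The only point needing care is the identification of the interpolation formula with the principal logarithm, and the diagonalisability argument above handles it cleanly. Note that uniqueness is over all real logarithms, not only upper-triangular ones, so no restriction on the form of $Q$ is needed.
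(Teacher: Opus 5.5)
Your proposal is correct and follows essentially the same route as the paper's (implicit) argument. The paper places $L$ in $\langle A, A^2, \ldots, A^{\dm-1}\rangle_{\RR}$ via Cayley--Hamilton and determines the $\alpha_i$ through the spectral mapping theorem and the Vandermonde-type system $V$, which amounts to your interpolation-on-eigenvectors identification. The embedding equivalence then follows, exactly as you argue, from Culver's uniqueness of the real logarithm forcing any rate matrix $Q$ with $\ee^Q = M$ to coincide with $L$.
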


\begin{remark}\label{rem:cyclic-1}
  The uniqueness result for $\log (M)$ also applies to \emph{cyclic}
  matrices, which are the matrices for which the characteristic
  polynomial is also the minimal polynomial; they are called
  \emph{non-derogatory} in the matrix analysis literature
  \cite{HJ,Higham}.  Cyclic matrices may contain non-trivial Jordan
  blocks; see \cite{interval} for concrete examples in our context
  where this occurs. As explained in \cite{BS2}, one can then still
  calculate the $\alpha^{}_{i}$ from a linear system of equations.
  Solving the latter requires the technically more involved confluent
  version of the Vandermonde matrix; see \cite[Thm.~5.3]{BS2} and the
  references given in its proof.  Further, as stated in
  Fact~\ref{fact:Culver}, uniqueness also holds when
  $\sigma (M) \subset \RR_{+}$ and no elementary Jordan block occurs
  more than once, which is yet slightly more general than being
  cyclic, and needed later.  \exend
\end{remark}

Let us now become a bit more specific on what kind of upper-triangular
matrices we will have to consider for the recombination process, and
how partitions enter the picture.

\subsection{Partitions}

Let $S$ be a finite set, and consider the lattice $\cP (S)$ of
partitions of $S$; see \cite{Aigner} for background material and
\cite{fast} and references therein for details of the present setting.
Here, we write a partition of $S$ as
$\cA = \{ A_{1}, \dots , A_{m} \}$, where $m = |\cA|$ is the number of
its (non-empty) parts (also called blocks), and one has
$A_{i} \cap A_{j} = \varnothing$ for all $i\ne j$ together with
$A_{1} \cup \dots \cup A_{m} = S$. The natural ordering relation is
denoted by $\preccurlyeq$, where $\cA \preccurlyeq \cB$ means that
$\cA$ is \emph{finer} than $\cB$, or that $\cB$ is \emph{coarser} than
$\cA$.  The conditions $\cA \preccurlyeq \cB$ and
$\cB \succcurlyeq \cA$ are synonymous, while $\cA \prec\cB$ means
$\cA \preccurlyeq \cB$ together with $\cA \ne \cB$, so $\cA$ is
strictly finer than $\cB$.

The joint refinement (or meet) of two partitions $\cA$ and $\cB$ is
written as $\cA \wedge \cB$, and is the coarsest partition below $\cA$
and $\cB$.  The unique \emph{minimal} partition within the lattice
$\cP (S)$ is denoted as
$\pmin = \pmin^{\ts S} = \big\{ \{x\} \mid x \in S \big\}$, and the
unique \emph{maximal} one as $\pmax = \pmax^{\nts S} = \{ S \}$.  When
$U$ and $V$ are disjoint sets, both finite and non-empty, two
partitions $\cA\in\ts\cP(U)$ and $\cB\in\ts\cP(V)$ can be joined (in
the obvious way) to form an element of $\cP(U\nts \cup V)$. We denote
such a \emph{joining} by $\cA\sqcup \cB$, which is meant to indicate
the different roles of $\cP (U)$, $\cP(V)$ and $\cP(U\nts \cup V)$,
and similarly for multiple joinings.  Conversely, if
$\varnothing\ne U\nts\subseteq S$, a partition $\cA\in\ts\cP(S)$, with
$\cA = \{ A_{1}, \dots , A_{m} \}$ say, defines a unique partition of
$U$ by restriction. The latter is denoted by $\cA|^{\pa}_{U}$, and its
parts are precisely all non-empty sets of the form $A_{i} \cap U$ with
$1\leqslant i \leqslant m$, so all $A_i$ with
$A_i \cap U = \varnothing$ are discarded. For $U\subseteq S$, the
maximal partition in $\cP (U)$ is $\{ U \}\eqdef \pmax^{\nts U}$.

There is one technical identity on the joining of partitions that we
shall need. If $A$ is a part of $\cA$, we write $\cA\sm A$ instead of
$\cA\sm\{ A \}$ for simplicity. To continue, we also need integer
linear combinations of partitions, considered as formal sums, and
their (obvious) distributive behaviour in joinings with another
partition.

\begin{lemma}\label{lem:technical}
  Let\/ $\cA , \cB , \cC \in \cP (S)$ be arbitrary, but fixed.  Then,
  for all\/ $A\in\cA$, one has
\[
\begin{split}
  \sum_{B\in\cB} & \bigl[ (\cA\sm A) \sqcup \bigl( (\cB \sm B) \sqcup
     \cC |_B \bigr) |_A - (\cA\sm A) \sqcup \cB |_A \bigr] \\
  & = \sum_{B' \in \cB |_A} \bigl[ (\cA\sm A) \sqcup (\cB |_A \sm B') 
     \sqcup \cC |_{B'} - (\cA \sm A) \sqcup \cB |_A \bigr] .
\end{split}  
\]
\end{lemma}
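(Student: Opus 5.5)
Since both sides share the prefix $(\cA\sm A)\sqcup$, and joining with a fixed partition of $S\sm A$ is injective and extends linearly to formal sums, I will first strip this common factor. The claim then becomes an identity of formal integer combinations in $\cP(A)$:
\[
  \sum_{B\in\cB} \bigl[ \bigl( (\cB\sm B)\sqcup \cC|_B \bigr)\big|_A - \cB|_A \bigr]
  \, = \sum_{B'\in\cB|_A} \bigl[ (\cB|_A\sm B')\sqcup \cC|_{B'} - \cB|_A \bigr] .
\]
To prove this, I will split the left-hand sum according to whether $B\cap A=\varnothing$ or $B\cap A\neq\varnothing$.

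First, take $B\cap A=\varnothing$. The partition $(\cB\sm B)\sqcup\cC|_B$ agrees with $\cB$ outside $B$. Because restriction to $A$ discards every part that misses $A$, its restriction to $A$ is exactly $\cB|_A$. Each such summand therefore vanishes.

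Next, take $B\cap A\neq\varnothing$. The map $B\mapsto B'\defeq B\cap A$ is a bijection from these parts onto $\cB|_A$, since the parts of $\cB$ are disjoint and the parts of $\cB|_A$ are precisely the non-empty intersections. It then suffices to check that, for each such $B$,
\[
  \bigl((\cB\sm B)\sqcup\cC|_B\bigr)\big|_A \, = \, (\cB|_A\sm B')\sqcup \cC|_{B'} .
\]
Restriction is computed part by part. The parts of $\cB\sm B$ restrict to $A$ as $\cB|_A\sm B'$. The parts of $\cC|_B$, namely the sets $C\cap B$, restrict to the non-empty sets $C\cap B\cap A = C\cap B'$, which are exactly the parts of $\cC|_{B'}$. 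Since $B$ and its complement are disjoint, the two families do not interfere, so the joining commutes with restriction and the displayed equality holds. Summing over the bijection then yields the reduced identity.

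The only real obstacle is bookkeeping rather than mathematics: I must justify stripping $(\cA\sm A)\sqcup$ from formal sums, and confirm that restriction commutes with joining over the disjoint pieces $B$ and $S\sm B$. I would state both facts explicitly, as a one-line observation about restriction acting part-wise, before doing the case split. The vanishing of the $B\cap A=\varnothing$ terms is what makes the two index sets, $\cB$ and $\cB|_A$, match up.
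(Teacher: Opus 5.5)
Your proof is correct and follows essentially the paper's argument: the summands with $B\cap A=\varnothing$ vanish, and for the remaining ones the bijection $B\mapsto B\cap A$ onto $\cB|_A$, together with the part-wise computation of the restriction, matches the two sums term by term. The differences are cosmetic: you strip the common factor $(\cA\sm A)\sqcup$ first, for which only linearity and not injectivity is needed in this direction, and you do not need the paper's separate treatment of the cases $\cB=\pmax$ and $\cC=\pmax$.
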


\begin{proof}
  If $\cC = \pmax$, one has $(\cB \sm B) \sqcup \cC |_B = \cB$, and
  the left-hand side vanishes, as does the right-hand side, here due
  to $\cC |_{B'} = B'$. Likewise, $\cB = \pmax$ means $\cB = \{ S \}$,
  so the left-hand side simplifies to
  $(\cA\sm A) \sqcup (\cC |_A - \cB |_A)$. As this case also means
  $\cB |_A = \{ A \}$, the right-hand side only has the summand with
  $B'=A$, which gives the same expression.

  Fix now $A\in\cA$, and let $\cB = \{ B_{1}, \ldots , B_{k} \}$ and
  $\cC = \{ C_{1}, \ldots , C_{\ell} \}$, where $k,\ell \geqslant 2$.
  For any $1\leqslant i \leqslant k$, we also consider
  $\cB \sm B_i = \{ B_{1}, \ldots , \widehat{B_i}, \ldots , B_{k} \}$,
  where $\widehat{\, . \,}$ indicates the missing part. Then,
  $\cC |_{B_i} = \{ C_{1} \cap B_{i}, \ldots , C_{\ell} \cap B_{i} \}$
  with the understanding that empty sets are omitted.

Now, we get 
\[
  \bigl( (\cB \sm B_i) \sqcup \cC |_{B_i} \bigr) |_A \, = \, \{ B_{1}
  \cap A, \ldots , \widehat{B_i \cap A}, \ldots , B_k \cap A \} \cup
  \{ C_j \cap B_i \cap A : 1 \leqslant j \leqslant \ell \} \ts ,
\]   
again removing all empty sets.  If $B_i \cap A = \varnothing$, it is
clear that $\cC |_{B_i}$ is disjoint from $A$, hence
$\bigl( (\cB\sm B_i)\sqcup \cC |_{B_i} \bigr) \big|_A =\cB |_A$, and
this term does not contribute to the sum on the left-hand side of our
claim, as the difference in the square brackets vanishes.  This means
that the sum effectively runs over $B\in\cB$ subject to the condition
that $B\cap A \ne \varnothing$.

On the other side, we have
$\cB |_A = \{ B_1 \cap A, \ldots , B_k \cap A \}$, again discarding
empty sets, hence
\[
  \bigl( \cB |_A \sm (B_i \cap A) \bigr) \sqcup \cC |_{B_i \cap A} \,
  = \, \{ B_1 \cap A, \ldots , \widehat{B_i \cap A}, \ldots , B_k \cap
  A \} \cup \{ C_j \cap B_i \cap A : 1 \leqslant j \leqslant \ell \}
  \ts ,
\]
which agrees with the previous expression when
$B_i \cap A \ne \varnothing$.  Since the second sum only runs over
such parts, we see that both sums contain the same terms, and are thus
equal.
\end{proof}

In the identity in Lemma~\ref{lem:technical}, the partition $\cA\sm A$
can be taken out of the formal sum on both sides, in the obvious way,
which leads to the following variant.

\begin{coro}\label{coro:technical}
  Let\/ $\varnothing\ne A\subseteq S$ be arbitrary, but fixed. Then,
  one has the identity
\[
  \sum_{B\in\cB} \bigl[ \bigl( (\cB \sm B) \sqcup \cC |_B \bigr) |_A -
  \ts \cB |_A \bigr] \, = \! \sum_{B' \in \cB |_A} \! \bigl[ (\cB |_A
  \sm B') \sqcup \cC |_{B'} - \ts \cB |_A \bigr] ,
\]   
   which holds for all\/ $\cB, \cC \in \cP (S)$.  \qed
\end{coro}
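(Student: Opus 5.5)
The plan is to obtain Corollary~\ref{coro:technical} from Lemma~\ref{lem:technical} by choosing the auxiliary partition so that the term $\cA\sm A$ is empty. Concretely, for a given $\varnothing\ne A\subseteq S$ with $A\ne S$, let $\cA=\{A, S\sm A\}$; for $A=S$, let $\cA=\pmax$. In both cases $A\in\cA$, so Lemma~\ref{lem:technical} applies with this $\cA$ and the given $\cB,\cC$.

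The key step is to read the identity of Lemma~\ref{lem:technical} as an identity between formal integer linear combinations of partitions. Every partition occurring on either side has the form $(\cA\sm A)\sqcup\cD$ with $\cD\in\cP(A)$. The map $\cD\mapsto(\cA\sm A)\sqcup\cD$ from $\cP(A)$ into $\cP(S)$ is injective, because $\cD$ is recovered as the restriction to $A$. Extending it linearly and using the distributivity of joining over formal sums, the identity becomes
\[
  (\cA\sm A)\sqcup\Bigl(\sum_{B\in\cB}\bigl[\bigl((\cB\sm B)\sqcup\cC|_B\bigr)|_A-\cB|_A\bigr]-\!\!\sum_{B'\in\cB|_A}\!\!\bigl[(\cB|_A\sm B')\sqcup\cC|_{B'}-\cB|_A\bigr]\Bigr)=0 .
\]
Injectivity of the linear extension then forces the bracketed combination in $\cP(A)$ to vanish, and this is exactly the claim.

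Alternatively, one could skip the embedding and repeat the proof of Lemma~\ref{lem:technical} with $\cA\sm A$ deleted throughout. The bookkeeping is unchanged. Summands with $B\cap A=\varnothing$ cancel on the left, and for $B_i\cap A\ne\varnothing$ the two displayed partitions of $A$ coincide. The degenerate cases $\cB=\pmax$ or $\cC=\pmax$ are handled as in that proof.

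The only point needing care is that injectivity is used on formal sums, not just on single partitions. This holds because distinct $\cD\in\cP(A)$ give distinct joinings, so the linear extension maps a basis injectively to a basis. I expect no real obstacle beyond stating this cleanly.
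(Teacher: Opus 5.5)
Your proposal is correct and follows the paper's route: the corollary comes from Lemma~\ref{lem:technical} by taking the common partition $\cA\sm A$ out of the formal sums on both sides. Your explicit choice $\cA=\{A,S\sm A\}$ (or $\cA=\pmax$ when $A=S$) and the injectivity of $\cD\mapsto(\cA\sm A)\sqcup\cD$ on formal sums make precise what the paper calls taking $\cA\sm A$ out ``in the obvious way''.
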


Let us now see how partitions become useful for the stochastic
processes we are after.

\section{Partitioning processes for recombination}\label{sec:reco-part}

Let $S=\{1, 2, \ldots , n\}$ be the set of sites (typically
representing sequence positions or genes). There are $\bell_n$
elements in $\cP (S)$, where $\bell_n$ is the $n$-th Bell number, with
generating function
\[
    \sum_{n=0}^{\infty} \myfrac{\bell_n}{n {\ts} !} \ts z^{n} \ts = \,
    \ee^{\ee^z - 1} \ts = \,
       1 + z + 2 z^2 + 5 z^3 + 15 z^4 + 52 z^5 + \ldots \ts .
\]
The Bell numbers satisfy
$\bell_{n+1} = \sum_{m=0}^{n} \binom{n}{m} \bell_m$; see
\cite[\texttt{A{\ts}000{\ts}110}]{OEIS} for more. Let us now introduce
the recombination matrix classes of Markov matrices and generators for
dimension $\dm = \bell_n$, where it is natural to use the partitions
of $S$ for indexing the $\dm$ directions in $\RR^{\!\dm}\!$.

\subsection{Markov matrices}

To describe the recombination process in discrete time, we know from
\cite{fast,EMB} that
$M= \bigl( M^{}_{\!\cA \ts \cB} \bigr)_{\cA, \cB \in \cP (S)}$ with
$M^{}_{\!\cA \ts\cB} = 0$ whenever $\cB \not \preccurlyeq \cA$, which
means that $M$ is upper triangular. In fact, $M$ is an element of the
incidence algebra of $\cP (S)$, compare \cite{Spiegel}, which is an
even stronger property. The transition probabilities
$r^{}_{\! \cA} \in [0,1]$ from $\pmax$ to $\cA\in\cP(S)$ constitute the
first row of $M$, and we write
$M^{}_{\pmax \cA} = r(\cA) = r^{}_{\! \cA}$. Note that the $r(\cA)$
with $\cA \in \cP (S)$ form a probability (row) vector. All other
elements of $M$ derive from here via \emph{marginalisation}, as given
by
\begin{equation}\label{eq:M-marg}
    M^{}_{\!\cA\ts \cB} \, = \, \begin{cases}
    \prod_{A\in\cA} r^{\ts A}_{\nts \cB |_A}  ,  & 
         \text{if } \cB \preccurlyeq \cA  \ts , \\
    0 \ts , & \text{otherwise} \ts ,  \end{cases}
\end{equation}
with the marginal probabilities
\begin{equation}\label{eq:marg-prob}
    r^{\ts A}_{\cD} \, \defeq \sum_{\substack{\cC \in \cP (S) \\
         \cC |_A = \cD}} r (\cC) \ts ,
\end{equation}
for any $\cD \in \cP (A)$.  It is easy to see that $M=\one$ satisfies
\eqref{eq:M-marg}, with $r (\cC) =\delta_{\pmax \cC}$.  In general,
the matrix elements of $M$ depend on the $r (\cA)$ in a
\emph{non-linear} manner once $\cA$ has more than one non-singleton
part. The eigenvalues of $M$ are its diagonal elements, so
\begin{equation}\label{eq:eigen-M}
  \lambda^{}_{\cA} \, = \, M^{}_{\! \cA \ts \cA} \ts = \prod_{A\in\cA}
  r^{\ts A}_{\nts \{ \nts A\}} \, = \prod_{A\in\cA}
     \sum_{\substack{\cC \in \cP (S) \\
      \cC |_A = \{ \nts A \} }} r (\cC) \ts ,
\end{equation}
using $\cA |_A = \{ \nts A \} = \pmax^{\nts A}$. The eigenvalues are
thus generally non-linear in the parameters $r (\cA)$.

\begin{remark}\label{rem:non-singular}
  One has $\det(M) = 0$ when $r(\pmax) = 0$, because
  $\lambda_{\pmax} = r(\pmax)$. In fact, since $r$ is a probability
  vector and $r(\pmax)$ occurs as a summand in each of the sums in
  \eqref{eq:eigen-M}, the triangular matrix $M$ is singular if and
  only if $r(\pmax) = 0$, and has positive spectrum otherwise.
\exend
\end{remark}

Let us next derive that, for fixed $n$ and $\dm =\bell_n$, the set of
Markov matrices of the form \eqref{eq:M-marg} is a semigroup
(actually, a monoid) under matrix multiplication.  Consider matrices
$M$ and $\tiM$ with defining vectors $r$ and $\tir$, so
$M^{}_{\pmax \cA} = r^{}_{\! \cA}$ and
$\tiM^{}_{\pmax \cA} = \tir^{}_{\! \cA}$ for $\cA\in\cP (S)$ together
with
\[
    M^{}_{\!\cA \ts \cB} \, = \prod_{A\in\cA} r^{\ts A}_{\cB |_A}
    \quad \text{and} \quad  \tiM^{}_{\! \cA \ts \cB} \, =
    \prod_{A\in\cA} \tir^{\ts A}_{\cB |_A}
\]
for $\cB \preccurlyeq \cA$, while all other matrix elements
vanish. Note that these relations, for $\cA=\pmax$ and arbitrary
$\cB$, simply reduce to those for the first row as given in the
preceding line. Further, for $\cB = \cA$, they give the diagonal
elements and thus the eigenvalues of the matrices. Now, consider a
product, $\whM = \tiM M$. Its matrix elements for
$\cC \preccurlyeq \cA$ satisfy
\[
    \whM^{}_{\! \cA \ts \cC} \, = \!
    \sum_{\cC \preccurlyeq \udo{\cB} \preccurlyeq \cA}
    \! \tiM^{}_{\! \cA \ts \cB} \, M^{}_{\cB \ts \cC} \, = \!
    \sum_{\cC \preccurlyeq \udo{\cB} \preccurlyeq \cA}
    \, \prod_{A\in\cA} \tir^{\ts A}_{\cB |_A}
    \prod_{B\in\cB} r^{\ts B}_{\cC |_B} \ts ,
\]
where the underdot marks the summation variable, while all other
elements vanish due to the special triangular structure of $M$ and
$\tiM$. In particular, the first row of $\whM$ has the entries
\begin{equation}\label{eq:product-r}
   \whr^{}_{\cC} \, \defeq \,\whM^{}_{\pmax \cC} \, = \!
   \sum_{\cC \preccurlyeq \udo{\cB} \preccurlyeq\pmax} \! \tir^{}_{\cB} 
   \prod_{B\in\cB} r^{\ts B}_{\cC |_B} \, \eqdef \,
   \bigl( \tir \bd r \bigr) (\cC) \ts .
\end{equation}
To establish that $\whM$ is again a Markov matrix of the right type,
we have to show that
$\whM^{}_{\! \cA \ts \cC} = \prod_{A\in\cA} \whr^{\ts A}_{\cC |_A}$
holds for $\whM$ for all $\cC\preccurlyeq\cA$, with the parameters
$\whr^{}_{\cC}$ from \eqref{eq:product-r}. In other words, using
\eqref{eq:product-r}, we have to prove that
\begin{equation}\label{eq:need-to-show}
    \sum_{\cC \preccurlyeq \udo{\cB} \preccurlyeq \cA} \,
    \prod_{A\in\cA}  \tir^{\ts A}_{\cB |_A} \prod_{B\in\cB} r^{\ts B}_{\cC |_B}
    \, = \prod_{A\in\cA} \whr^{\ts A}_{\cC |_A}
\end{equation}
holds for all $\cA, \cC \in \cP (S)$ with $\cC \preccurlyeq \cA$. One
important step in this is the following.

\begin{lemma}\label{lem:part-help}
  For any\/ $\varnothing \ne U\subseteq S$ and then every\/ 
  $\cA \in \cP (U)$, one has the identity
\[
    \whr^{\ts U}_{\! \cA} \, = \!
    \sum_{\cA \preccurlyeq \ts\udo{\fa}\ts \preccurlyeq \pmax^{\nts U}}
    \! \tir^{U}_{\fa} \prod_{a \in \fa} r^{\, a}_{\cA |_{a}} \ts .
\]   
\end{lemma}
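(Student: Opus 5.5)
Throughout, write $\cC$ for a general partition of $S$, to keep it apart from $\cA\in\cP(U)$. The plan is to expand the left-hand side using the definition of marginals, and then regroup the sum by restriction to $U$.

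\emph{Step 1: expand the marginal.} By \eqref{eq:marg-prob} and \eqref{eq:product-r},
\[
  \whr^{\ts U}_{\! \cA} \, = \sum_{\cC|_U = \cA} \whr^{}_{\cC}
  \, = \sum_{\cC|_U = \cA} \, \sum_{\cC \preccurlyeq \udo{\cB}}
  \tir^{}_{\cB} \prod_{B\in\cB} r^{\ts B}_{\cC|_B} .
\]
Swap the order of summation so that $\cB\in\cP(S)$ is the outer variable. The inner variable is then $\cC\preccurlyeq\cB$ subject to $\cC|_U=\cA$.

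\emph{Step 2: factorise the inner sum.} A partition $\cC\preccurlyeq\cB$ is the same as a family $(\cC|_B)_{B\in\cB}$ with $\cC|_B\in\cP(B)$ chosen independently. The constraint $\cC|_U=\cA$ splits along the parts: since $\cC\preccurlyeq\cB$, it is equivalent to $\cC|_{B\cap U}=\cA|_{B\cap U}$ for every $B$ with $B\cap U\ne\varnothing$. It also forces $\cA\preccurlyeq\cB|_U$; if this fails, the contribution is zero. So the inner sum becomes
\[
  \prod_{B\in\cB} \, \sum_{\substack{\cD\in\cP(B)\\ \cD|_{B\cap U}=\cA|_{B\cap U}}} r^{\ts B}_{\cD} .
\]
For parts with $B\cap U=\varnothing$ the factor is $\sum_{\cD} r^{B}_{\cD}=1$, because $r$ is a probability vector. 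For the others, the factor is the marginal of a marginal, which equals $r^{\ts B\cap U}_{\cA|_{B\cap U}}$ by transitivity of marginalisation, as \eqref{eq:marg-prob} immediately gives. The inner sum is therefore $\prod_{a\in\cB|_U} r^{\ts a}_{\cA|_a}$, which depends on $\cB$ only through $\fa\defeq\cB|_U$.

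\emph{Step 3: regroup by $\fa$.} Group the outer sum according to $\fa=\cB|_U\in\cP(U)$, which runs over $\cA\preccurlyeq\fa\preccurlyeq\pmax^{\nts U}$. We have $\sum_{\cB|_U=\fa}\tir^{}_{\cB}=\tir^{U}_{\fa}$, again by \eqref{eq:marg-prob}, and this yields the claim.

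The only step needing care is the bijection and factorisation in Step 2. There one must check that the constraint $\cC|_U=\cA$ decomposes exactly into the blockwise constraints. This holds because each part of $\cC|_U$ lies inside some $B\cap U$, so $\cC|_U$ is the joining of the $\cC|_{B\cap U}$. The same fact shows that such $\cC$ exist only when $\cA\preccurlyeq\cB|_U$.
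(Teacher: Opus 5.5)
Your proposal is correct and follows essentially the same route as the paper. You expand $\whr^{\ts U}_{\!\cA}$ via \eqref{eq:marg-prob} and \eqref{eq:product-r}, factorise the inner sum over the finer partition blockwise (blocks disjoint from $U$ contribute $1$, the others give $r^{\ts a}_{\cA|_a}$ by transitivity of marginalisation), and then collect $\tir$ over all partitions with the same restriction $\fa$ to $U$; the only cosmetic difference is that the paper regroups by $\fa$ before evaluating the inner sum, whereas you factorise first and dispose of the case $\cA \not\preccurlyeq \cB|_U$ explicitly.
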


\begin{proof}
  Let $\varnothing \ne U\subseteq S$ and $\cA \in \cP (U)$ be fixed.
  Then, by the definition of the marginal probabilities from
  \eqref{eq:marg-prob} in conjunction with \eqref{eq:product-r}, we
  get
\begin{align*}
   \whr^{U}_{\cA} \,  = \! \sum_{\substack{\cB\in\cP(S) \\ \cB |_U = \cA}}
   \! \whr^{}_{\cB} \, = \! \sum_{\substack{\cB\in\cP(S) \\ \cB |_U = \cA}}
   \, \sum_{\cB \preccurlyeq \udo{\cC} \preccurlyeq \pmax}  \tir^{}_{\cC}
   \prod_{C\in\cC} r^{\ts C}_{\cB |_C}  \,  =  \!
   \sum_{\cA \preccurlyeq \ts \udo{\fa} \ts \preccurlyeq \pmax^{\nts U}}
   \, \sum_{\substack{\cC \in \cP (S) \\ \cC |_U = \ts \fa}}  \! \tir^{}_{\cC}
      \sum_{\substack{\udo{\cB} \preccurlyeq \cC \\ \cB |_U = \ts \cA}}
      \prod_{C\in\cC} r^{\ts C}_{\cB |_C} \ts ,
\end{align*}  
where the last step follows by a suitable resummation.
   
To establish our claim, we now need to evaluate the last sum (over
$\cB$).  For any given $\fa = \{ a^{}_{1}, \ldots , a^{}_{k} \}$ and
$\cC \in\cP (S)$ with $\cC |_U = \fa$, we employ a calculation from
\cite{BBS} and write
$\cC = \{ C^{}_{1} , \ldots , C^{}_{k}, C^{\prime}_{1}, \ldots ,
C^{\prime}_{\ell} \}$ with $C^{}_i \cap U = a^{}_i$,
$C^{\prime}_{j} \cap U = \varnothing$, and
$\ell = \lvert \cC \rvert - k$.  This gives
\begin{align*}
  \sum_{\substack{\udo{\cB} \preccurlyeq \cC \\ \cB |_U = \ts \cA}}
   \prod_{C\in\cC} r^{\ts C}_{\cB |_C} \, & =
    \sum_{\substack{\fc^{}_{1} \in \cP (C^{}_1) \\ \fc^{}_{1} |_{a^{}_1} 
        = \cA |^{\phantom{\hat{I}}}_{a^{}_1}}} \cdots 
    \sum_{\substack{\fc^{}_{k} \in \cP (C^{}_k) \\ \fc^{}_{k} |_{a^{}_k} 
        = \cA |^{\phantom{\hat{I}}}_{a^{}_k}}}    
    \sum_{\fc^{\prime}_{1} \in \cP (C^{\prime}_1)} \cdots      
    \sum_{\fc^{\prime}_{\ell} \in \cP (C^{\prime}_{\ell})}
    \, \prod_{i=1}^{k}   r^{C^{}_{i}}_{\fc^{\phantom{\prime}}_{i}}
    \prod_{j=1}^{\ell}  r^{C^{\prime}_{j}}_{\fc^{\prime}_{j}} \\
     & = \, \biggl( \, \prod_{i=1}^{k}
          \sum_{\substack{\fc^{}_{i} \in \cP (C^{}_{i}) \\[1mm]
          \fc^{}_{i} |^{}_{a_i} = \cA |_{a_i}}} 
          \! r^{C^{}_{i}}_{\fc^{\phantom{\prime}}_{i}}\biggr)
          \biggl( \, \prod_{j=1}^{\ell} \,
          \sum_{\fc^{\prime}_{j} \in \cP (C^{\prime}_{j})}
          \! r^{C^{\prime}_{j}}_{\fc^{\prime}_{j}} \biggr) \, = \,
          \prod_{i=1}^{k} r^{\, a^{}_{i}}_{\!\cA |_{a^{}_{i}}}  \ts ,      
\end{align*}   
where the last step uses that each factor in the first product is a
sum that gives one of the $r^{\, a^{}_{i}}_{\cA |_{a^{}_{i}}} \nts $,
while each factor in the second product is a sum that adds to
$1$. These relations use the fact that, for any
$\varnothing \ne W \subseteq V \subseteq S$ and every
$\cB \in \cP (S)$, one has
\[
   r^{\ts W}_{\cB |_W} \, = \sum_{\substack{\cD \in \cP (S) \\ 
        \cD |_W = \cB |_W}} \! r^{}_{\cD} \, =
        \sum_{\substack{\cE \in \cP (V) \\
        \cE |_W = \cB |_W}} \! r^{V}_{\cE}  ,
\]  
in analogy to \cite[Eq.~(26)]{BBS}. Putting the two pieces together
and observing the chosen representation of $\cA$ establishes our
claim.
\end{proof}

Now, we can verify \eqref{eq:need-to-show} as follows. Let
$\cA\in\cP(S)$ be fixed, and assume
$\cA = \{ A^{}_1, \ldots , A^{}_m \}$. Any $\cB \preccurlyeq\cA$ can
then be written as $\cB = \fb^{}_{1} \sqcup \ldots \sqcup \fb^{}_{m}$
with $\fb_i \in \cP (A_i)$, and we have $\cB |_{A_i} = \fb_i$. The
left-hand side of \eqref{eq:need-to-show} then is
\begin{align*}
   \sum_{\cC \preccurlyeq \udo{\cB} \preccurlyeq \cA}  \,
  \prod_{i=1}^{m}  \tir^{\ts A_i}_{\cB |_{A_i}}
       \prod_{B\in\cB} r^{\ts B}_{\cC |_B}
    & = \sum_{\cC |_{A_1} \nts \preccurlyeq  \ts \udo{\fb}^{}_{1}
       \preccurlyeq  \pmax^{\! A_1}} \cdots
       \sum_{\cC |_{A_m} \nts \preccurlyeq  \ts \udo{\fb}^{}_{m}
       \preccurlyeq \pmax^{\! A_m}} \,  \prod_{i=1}^{m} \tir^{A_i}_{\fb_i}
       \prod_{b_i \in\fb_i} r^{\ts b_i}_{\cC |_{b_i}} \\[1mm]
    & = \, \prod_{i=1}^{m} \, \sum_ {\cC |_{A_i} \nts \preccurlyeq  \ts 
       \udo{\fb}^{}_{i} \preccurlyeq \pmax^{\! A_i}} \! \! \tir^{A_i}_{\fb_i}
       \prod_{b_i \in \fb_i} \! r^{\ts b_i}_{\cC |_{b_i}}  \; = \,
       \prod_{i=1}^{m} \whr^{\ts A_i}_{\cC |_{A_i}} \ts ,
\end{align*}
where the final step follows from an application of
Lemma~\ref{lem:part-help} to each of the factors. Here, the last
product is nothing but the right-hand side of
\eqref{eq:need-to-show}. As $\cA\in\cP(S)$ was arbitrary, we can now
state the following result.

\begin{prop}\label{prop:Markov-closure}
  Let\/ $S = \{ 1, \ldots , n \}$ be fixed. Then, the set of all
  Markov matrices that satisfy the marginalisation structure from
  \eqref{eq:M-marg} contains\/ $\one$ and forms a monoid under matrix
  multiplication. It is also topologically closed, that is, closed
  under taking limits.
  
  Equivalently, the simplex of probability vectors in\/ $\RR^{\dm}$
  with\/ $\dm = \bell_n$ is a monoid under the multiplication\/ $\bd$
  defined in \eqref{eq:product-r}, with the parametrisation of the
  Markov matrices by the probability vectors defining an isomorphism.
\end{prop}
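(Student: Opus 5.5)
The plan is to assemble the proposition from the computation just carried out, in four short steps: the unit, closure under products, associativity and the isomorphism, and topological closure.

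\emph{Unit.} Write $\cM^{\mathrm{rec}}$ for the set of Markov matrices of the form \eqref{eq:M-marg}. We already noted that $\one\in\cM^{\mathrm{rec}}$, with $r(\cC)=\delta^{}_{\pmax\cC}$.

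\emph{Closure under products.} Take $M,\tiM\in\cM^{\mathrm{rec}}$ with parameter vectors $r,\tir$, and set $\whM=\tiM M$. First, $\whM$ is Markov, since products of Markov matrices are Markov. Next, $\whM$ is upper triangular in the sense that $\whM^{}_{\!\cA\ts\cC}=0$ unless $\cC\preccurlyeq\cA$; this holds because the incidence structure of $\cP(S)$ is preserved under products. The remaining entries satisfy \eqref{eq:need-to-show}, which was verified above from Lemma~\ref{lem:part-help} via the factorisation $\cB=\fb^{}_1\sqcup\cdots\sqcup\fb^{}_m$. The first row of $\whM$ is the vector $\whr=\tir\bd r$ from \eqref{eq:product-r}. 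This vector is a probability vector, because it is the first row of a Markov matrix. Hence $\whM\in\cM^{\mathrm{rec}}$, with parameter vector $\tir\bd r$.

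\emph{Isomorphism and monoid structure.} Consider the map $\Phi$ sending a probability vector $r$ to the matrix $M$ defined by \eqref{eq:M-marg}. Each such $r$ yields a Markov matrix: the entries are non-negative, and the row sum for row $\cA$ is
\[
\sum_{\cB\preccurlyeq\cA}\prod_{A\in\cA} r^{A}_{\cB|_A}=\prod_{A\in\cA}\sum_{\cD\in\cP(A)}r^{A}_{\cD}=1 .
\]
Here we use the bijection $\cB\leftrightarrow(\cB|_A)_{A\in\cA}$ for $\cB\preccurlyeq\cA$, together with the fact that the marginals sum to $1$. The map $\Phi$ is injective, since $r$ is recovered as the first row of $M$, and it is surjective onto $\cM^{\mathrm{rec}}$ by definition. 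The previous step gives $\Phi(\tir\bd r)=\Phi(\tir)\Phi(r)$. Associativity of $\bd$ therefore follows from associativity of matrix multiplication, and the unit of $\bd$ is $\delta^{}_{\pmax}$. So $\bd$ makes the simplex a monoid, and $\Phi$ is a monoid isomorphism.

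\emph{Topological closure.} Each entry of $\Phi(r)$ is a polynomial in $r$, so $\Phi$ is continuous. The simplex is compact, hence so is its image $\cM^{\mathrm{rec}}$, and in particular it is closed. More concretely, let $M_k\to M$ with $M_k\in\cM^{\mathrm{rec}}$. The first rows $r_k$ converge to a probability vector $r$, because the simplex is closed. Continuity then gives $M=\lim\Phi(r_k)=\Phi(r)$.

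The only substantive obstacle was identity \eqref{eq:need-to-show}, which is already handled by Lemma~\ref{lem:part-help}. What remains is bookkeeping. The one point to state carefully is the row-sum/factorisation bijection used above, and that is the same decomposition already employed in the verification of \eqref{eq:need-to-show}.
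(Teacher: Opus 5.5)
Your proposal is correct and follows the paper's own argument: closure via \eqref{eq:need-to-show} and Lemma~\ref{lem:part-help}, the bijection with the simplex through the first row, the homomorphism property from \eqref{eq:product-r}, and associativity of $\bd$ inherited from matrix multiplication. You also make explicit two points that the paper's short proof leaves implicit: every probability vector does yield a Markov matrix (your row-sum factorisation), and topological closedness follows from continuity of the parametrisation on the compact simplex.
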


\begin{proof}
  The claim on the Markov matrices follows from our above derivations.
  The parametrisation of $M$ through its first row in conjunction with
  \eqref{eq:M-marg} and \eqref{eq:marg-prob} clearly is a bijection
  between the recombination Markov matrices and the $\dm$-simplex of
  probability vectors. The homomorphism property follows from
  \eqref{eq:product-r}, while associativity of $\bd$, meaning
  $(r\bd s)\bd t = r \bd (s\bd t)$, is inherited from that of matrix
  multiplication, or can be verified via \eqref{eq:need-to-show}.
\end{proof}

For this result, there was no need to distinguish singular from
non-singular matrices. It is clear that the non-singular ones form an
open subset that still is a monoid under matrix multiplication.
Embeddability will only be relevant for this subset.  Also, by the
results of \cite{JS}, one should expect some interesting Lie-algebraic
structure as well.

\subsection{Extension and algebraic properties}
Let us note that the above calculation around $\whr$ relies on $M$ and
$\tiM$ having unit row sums, but does not need non-negativity of the
matrix elements. Consequently, if $M$ and $\tiM$ are real matrices
with unit row sums that satisfy \eqref{eq:M-marg}, $\tiM M$ is again a
matrix of this type. This motivates the following notion.

\begin{definition}\label{def:RMS}
  Let\/ $S = \{ 1, 2, \ldots, n \}$ with\/ $n\in\NN$ be fixed and
  set\/ $\dm=\bell_n$. An upper-triangular, real\/
  $\dm{\times}\dm$-matrix\/ $M$ is said to have a \emph{recombination
    matrix structure}, or RMS for short, if all row sums are\/ $1$ and
  if all elements of\/ $M$ follow from its first row according to
  \eqref{eq:M-marg} and \eqref{eq:marg-prob}.  The first row of\/ $M$,
  denoted by\/ $r$, is its \emph{parameter vector}.

  If\/ $r$ is a probability vector, $M$ is an RMS \emph{Markov}
  matrix.
\end{definition}

The algebraic closure condition under multiplication can be
interpreted as follows.  If we write an RMS Markov matrix as
$M = M(r)$, thus referring to the parametrisation of $M$ via the row
probability vector $r$, Eq.~\eqref{eq:product-r} implies the relation
\[
  M(r) M(r' \ts ) \, = \, M \bigl( r M(r' \ts )\bigr)
  \, = \, M( r \bd r' \ts ) \ts ,
\]
where $r M(r')$ is again a probability vector.

More generally, the parameter vectors of RMS matrices are arbitrary
elements of the hyperplane in $\RR^{\dm}$ that is perpendicular to
$(1,1,\ldots, 1)$ and contains the unit vector $e^{}_{\pmax}$. The
simplex of probability vectors is the convex hull of the $\bell_n$
unit vectors $e^{}_{\!\cA}$ with $\cA\in\cP (S)$. If one demands
invertibility, one also has the following.

\begin{prop}\label{prop:inverse}
  Let\/ $S = \{ 1,2, \ldots , n\}$ be fixed, set\/ $\dm=\bell_n$, and
  let\/ $M$ be a non-singular RMS matrix. Then, also\/ $M^{-1}$ is an
  RMS matrix, and the set of all non-singular RMS matrices forms a
  group under matrix multiplication.

  Further, the subset of all RMS matrices with positive spectrum 
  forms a subgroup.
\end{prop}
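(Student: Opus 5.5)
The plan is to construct the inverse directly inside the RMS class, using the closure under products that was established above. That closure used only unit row sums, not non-negativity. Let $M=M(r)$ be a non-singular RMS matrix. Its eigenvalues are the diagonal entries
$\lambda^{}_{\cA}=\prod_{A\in\cA} r^{\ts A}_{\{A\}}$
from \eqref{eq:eigen-M}, and these remain valid for arbitrary real parameter vectors. So non-singularity means that every marginal $r^{\ts A}_{\{A\}}$ with $\varnothing\ne A\subseteq S$ is non-zero. We seek a real vector $s$ in the hyperplane of parameter vectors, meaning its entries sum to $1$, with $s\bd r = e^{}_{\pmax}$. Then $M(s)M(r)=M(s\bd r)=M(e^{}_{\pmax})=\one$, so $M(s)=M^{-1}$, since a one-sided inverse of a square matrix is two-sided.

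To find $s$, I would use that the first row of $M(s)M(r)$ is $sM(r)$. We therefore need $sM(r)=e^{}_{\pmax}$, which forces $s=e^{}_{\pmax}M^{-1}$, the first row of $M^{-1}$. This $s$ has entries summing to $1$, because $M\mathbf{1}=\mathbf{1}$ implies $M^{-1}\mathbf{1}=\mathbf{1}$. So $s$ is an admissible parameter vector, and we may form the RMS matrix $M(s)$.

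By \eqref{eq:product-r} together with \eqref{eq:need-to-show}, which holds for real vectors with unit sum, $M(s)M(r)$ is the RMS matrix with parameter vector $sM(r)=e^{}_{\pmax}$. That matrix is $\one$, so $M(s)=M^{-1}$ and $M^{-1}$ is RMS.

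The main point to check carefully is that Lemma~\ref{lem:part-help} and the verification of \eqref{eq:need-to-show} indeed use only the unit-sum property. The step where the sums over $\fc'_j$ ``add to $1$'' uses $\sum_{\fc} r^{V}_{\fc}=1$, which holds for any real $r$ with total sum $1$. So nothing breaks.

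The group property then follows directly. The set of non-singular RMS matrices contains $\one$. It is closed under products, by the extended closure and $\det(\tiM M)=\det\tiM\det M\ne0$, and it is closed under inverses, by the above. Associativity is inherited from matrix multiplication.

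For the subgroup claim, note that these matrices are triangular. For $\tiM$ and $M$ with $\cB\preccurlyeq\cA$ ordering, the product of two upper-triangular matrices has diagonal $(\tiM M)_{\cA\cA}=\tiM_{\cA\cA}M_{\cA\cA}$, and $(M^{-1})_{\cA\cA}=1/M_{\cA\cA}$. Hence positivity of all diagonal entries, which is positivity of the spectrum, is preserved under products and inverses, and $\one$ has positive spectrum. This yields the subgroup.
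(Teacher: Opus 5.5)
Your proof is correct and is essentially the paper's own: both construct a unit-sum vector $\tir$ with $\tir\bd r=\delta$ (the same vector in both cases, namely the first row of $M^{-1}$), use the multiplicative closure for general RMS matrices to get $M(\tir)M=\one$, and handle the positive-spectrum subgroup via the diagonals of triangular products and inverses. The only difference is that you read off $\tir=e^{}_{\pmax}M^{-1}$ directly, so its unit sum follows at once from the unit row sums of $M^{-1}$, whereas the paper computes $\tir$ by the back-substitution recursion \eqref{eq:r-inv} and omits the unit-sum check.
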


\begin{proof}
  Let a non-singular RMS matrix $M$ be given, with parameter vector
  $r \in \RR^{\!\dm}$ for $\dm = \bell_n$, where
  $\sum_{\cA \in \cP (S)} r(\cA) = 1$. We first observe that there is
  precisely one $\tir\in\RR^{\!\dm}$ such that
  $\whr (\cA) = \delta^{}_{\pmax \cA}$ holds in \eqref{eq:product-r}
  for every $\cA\in\cP(S)$. This is easily seen recursively, which
  gives
\begin{equation}\label{eq:r-inv}
  \tir (\cA) \, = \, \myfrac{\delta^{}_{\pmax \cA}}{ r(\pmax)}
  \, - \lambda^{-1}_{\!\cA} \sum_{\udo{\cB} \succ \cA} \tir (\cB)
  \prod_{B\in\cB} \sum_{\substack{\cC \in \cP (S) \\ \cC |_B = \cA |_B}}
  \! r (\cC) \ts ,
\end{equation}
where the $\lambda^{}_{\!\cA} \ne 0$ are the eigenvalues of $M$ from
\eqref{eq:eigen-M}, with $\lambda_{\pmax} = r(\pmax)$. This vector
satisfies $\sum_{\cA\in\cP(S)} \tir (\cA) = 1$, as one can check with
a calculation that is completely analogous to the one used in the
proof of Lemma~\ref{lem:part-help} (and thus omitted here).

Let $\tiM$ be the upper-triangular matrix defined by $\tir$ according
to \eqref{eq:M-marg}. Then, by the multiplicative closure, which
generalises from RMS Markov matrices to general RMS matrices by the
same calculation as used above, $\tiM M$ is an RMS matrix with
parameter vector $\delta = (\delta^{}_{\pmax \cA})^{}_{\cA\in\cP(S)}$,
which is the identity, so $\tiM M = \one$ and $\tiM = M^{-1}$ is an
RMS matrix as well.  The claimed group property is then clear.

When $M$ has positive spectrum, this is also true of its inverse,
which is clear from its triangular structure and implies the stated
subgroup property.
\end{proof}

Let us look at this in a different way, which is also an extension of
Remark~\ref{rem:non-singular}.

\begin{definition}\label{def:spec}
  Let $r\in \RR^{\dm}$ be a general row vector with row sum $1$.
  Then, we call the set $\{ \lambda^{}_{\!\cA} : \cA \in \cP(S) \}$
  with $\lambda^{}_{\!\cA} = \prod_{A\in\cA}\phi^{}_{\! A}$ the
  \emph{spectrum} of $r$, where the
\[
  \phi^{}_{U} \, \defeq \, r^{\, U}_{\! \{ U \} } \, = \!
  \sum_{\udo{\cA} |_U = \{ U \}} \! r (\cA) \qquad \text{with }
  \, \varnothing \ne U \subseteq S
\]
are the \emph{characteristic factors} of $r$. Further, $r$ has
\emph{simple spectrum} when the $\lambda^{}_{\!\cA}$ are distinct, $r$
is \emph{non-singular} when no $\lambda^{}_{\!\cA}$ vanishes, and $r$
has \emph{positive spectrum} when all $\lambda^{}_{\!\cA} > 0$.
\end{definition}

Note that $\lambda_{\pmax} = \phi^{}_{S}=r(\pmax)$, and $r$ being
non-singular is equivalent to $\phi^{}_{U} \ne 0$ for all
$\varnothing \ne U \subseteq S$. Note further that $\phi^{}_{U}$ is
an eigenvalue of the marginal Markov matrix defined by the
parameter vector $r^{U} \defeq (r^{U}_{\cD})^{}_{\cD \in \cP (U)}$
with entries according to \eqref{eq:marg-prob}. This matrix describes
the recombination process on the subsystem defined by $U$; see
\cite[Sec.~6]{fast}. The spectrum of $r$ is positive if
and only if all $\phi^{}_{U} > 0$. While one direction of this is
obvious, the other is a simple consequence of $\phi^{}_{\{i\}} = 1$
for any singleton set $U = \{i\}$. Indeed, this direction is clear for
$U=S$, while any other $U$ can be augmented by singleton sets to form
a partition of $S$. Now, Proposition~\ref{prop:inverse} has the
following consequence.

\begin{coro}\label{coro:r-inverse}
  Any non-singular, unit row sum vector\/ $r\in\RR^{\dm}$ has an
  inverse for\/ $\bd$, which is the vector\/ $\tir$ from
  \eqref{eq:r-inv}, and the characteristic factors are related by\/
  $\tilde{\phi}^{}_{U} = 1/\phi^{}_{U}$, for all\/
  $\varnothing \ne U \subseteq S$. In particular, if\/ $r$ has
  positive spectrum, then so does\/ $\tir$.
\end{coro}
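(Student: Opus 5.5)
The plan is to read Corollary~\ref{coro:r-inverse} off Proposition~\ref{prop:inverse} through the correspondence $r \mapsto M(r)$. Let $r$ be non-singular with unit row sum, and let $M=M(r)$ be the RMS matrix it defines via \eqref{eq:M-marg} and \eqref{eq:marg-prob}. By \eqref{eq:eigen-M}, the diagonal entries of $M$ are $\lambda^{}_{\!\cA}=\prod_{A\in\cA}\phi^{}_{A}$. Since $M$ is upper triangular and no $\lambda^{}_{\!\cA}$ vanishes, $M$ is non-singular.

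Proposition~\ref{prop:inverse} then tells us that $M^{-1}=\tiM$ is an RMS matrix. Its parameter vector $\tir$ is the one constructed recursively in \eqref{eq:r-inv}, and it satisfies $\tir\bd r=\delta^{}_{\pmax}$. From $\tiM M=\one$ we also get $M\tiM=\one$, which translates into $r\bd\tir=\delta^{}_{\pmax}$ by \eqref{eq:product-r}. So $\tir$ is a two-sided inverse of $r$ under $\bd$. Uniqueness follows from associativity of $\bd$, which is inherited from matrix multiplication, or from the uniqueness already noted in the recursion.

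For the characteristic factors, fix $\varnothing\ne U\subseteq S$ and apply Lemma~\ref{lem:part-help} to the product $\tir\bd r=\delta^{}_{\pmax}$ with $\cA=\{U\}=\pmax^{\nts U}$. The lemma is stated for general unit row sum vectors: the computation never used positivity, only unit row sums, through the fact that the marginals over parts disjoint from $U$ sum to $1$. The sum over $\fa$ then has the single term $\fa=\{U\}$. This gives
\[
  \widehat{\phi}^{}_{U} \, = \, \tilde{\phi}^{}_{U}\, \phi^{}_{U} ,
\]
where $\widehat{\phi}^{}_{U}$ is the characteristic factor of $\delta^{}_{\pmax}$, namely $\sum_{\cC|_U=\{U\}}\delta^{}_{\pmax\cC}=1$. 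Hence $\tilde{\phi}^{}_{U}=1/\phi^{}_{U}$.

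An alternative route to the same identity is to compare diagonal entries in $\tiM M=\one$, which gives $\tilde\lambda^{}_{\!\cA}=1/\lambda^{}_{\!\cA}$. Taking $\cA=\{U\}\sqcup\pmin^{S\setminus U}$ and using $\phi^{}_{\{i\}}=\tilde\phi^{}_{\{i\}}=1$ recovers the factors. Finally, if all $\phi^{}_{U}>0$, then all $\tilde\phi^{}_{U}>0$, so $\tir$ has positive spectrum.

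The main point needing care is that Lemma~\ref{lem:part-help} and the multiplicative closure hold for signed parameter vectors, not only probability vectors. I would handle this by checking that each step uses only unit row sums, which the paper already notes for the closure computation.
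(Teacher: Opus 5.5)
Your proof is correct and follows the paper's argument. The paper likewise takes $\tir$ with $\tir \bd r = \delta$ from Proposition~\ref{prop:inverse}, reads off $\tilde{\lambda}_{\cA}\lambda_{\cA} = 1$ from \eqref{eq:need-to-show} with $\cC = \cA$, and specialises to $\cA_U = \{U\} \sqcup \pmin^{S \setminus U}$ using $\phi_{\{i\}} = 1$, which is exactly your alternative route. Your main route applies Lemma~\ref{lem:part-help} directly with $\cA = \{U\}$; this is just the one-part case of the computation behind \eqref{eq:need-to-show}, so it gives $\tilde{\phi}_U \phi_U = 1$ without the singleton augmentation. Your observation that only unit row sums, not non-negativity, are used is the same justification the paper relies on.
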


\begin{proof}
  Since $\tir \bd r = \delta$, where
  $\delta(\cA) = \delta^{}_{\pmax\cA}$, and since the spectrum of
  $\delta$ consists of $\dm$ copies of $1$, we get
  $\tilde{\lambda}^{}_{\!\cA} \lambda^{}_{\cA} = 1$ for all
  $\cA\in\cP(S)$ from \eqref{eq:need-to-show} for $\cC = \cA$. This
  gives
  $\tilde{\phi}^{}_{S} = \tilde{\lambda}_{\pmax} = 1/\lambda_{\pmax} =
  1/\phi^{}_{S}$.  For any $\varnothing \ne U \subset S$, we form the
  partition
  $\cA^{}_{U} \defeq \{ U \} \sqcup \bigl\{ \{ i \} : i \in S\ts \sm U
  \bigr\}$ and obtain
\[
  \tilde{\phi}^{}_{\ts U} \, = \prod_{A\in\cA^{}_U} \tilde{\phi}^{}_{\nts A}
  \, = \, \tilde{\lambda}^{}_{\!\cA^{}_U} \, = \, \lambda^{-1}_{\!\cA^{}_U}
  \, = \prod_{A\in\cA^{}_U} \phi^{-1}_{A} \, = \, \phi^{-1}_{\ts U} \ts ,
\]
where we have used that the characteristic factors for singleton sets
are always $1$.

The claim on the positivity follows from this, too.
\end{proof}

The product $\bd$ from \eqref{eq:product-r} is complicated, but has
one important extra property. The set of vectors with unit row sum is
closed under convex combinations, and our product respects this in the
first argument, meaning that we have
\begin{equation}\label{eq:convex}
  \bigl( \alpha \ts r + (1 \nts -\alpha) s \bigr) \bd \ts t
  \, = \, \alpha \, r \bd \ts t + (1 \nts -\alpha) \ts s \bd \ts t
\end{equation}
for all $\alpha \in [0,1]$, as one can easily verify. So, although we
have no addition at our disposal, we can use this property to show the
following result on the existence of principal roots, where we use the
shorthand $w^2 \defeq w \bd w$ and similarly for other powers.

\begin{lemma}\label{lem:roots}
  Let\/ $r \in \RR^{\dm}$ be an arbitrary row vector with unit sum and
  positive spectrum. Then, for any integer\/ $p\geqslant 2$, there is
  a unit row sum vector\/ $w$ with positive spectrum that satisfies\/
  $w^{\ts p} = r$. When the spectrum of\/ $r$ is also simple, $w$ is
  unique.
\end{lemma}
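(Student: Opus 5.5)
Via the parametrisation $r\mapsto M(r)$ of Proposition~\ref{prop:Markov-closure} and its extension to general RMS matrices, $w^{\ts p}=r$ is equivalent to $M(w)^p = M(r)$. So we need an RMS matrix $W$ with positive spectrum and $W^p=M(r)$. The convexity property \eqref{eq:convex} suggests a fixed-point or recursive construction, but a cleaner route is to build $W$ from the principal logarithm and then show that it lies in the RMS class.

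\emph{Existence.} Since $r$ has positive spectrum, $M=M(r)$ is upper triangular, real, with unit row sums and positive diagonal $\lambda_{\!\cA}$. Positivity alone does not give $\varrho^{}_{M-\one}<1$, so instead of the series in Lemma~\ref{lem:real-log-exists} we use the principal logarithm $L=\log M$, defined by holomorphic functional calculus on a simply connected domain containing $\sigma(M)\subset(0,\infty)$ and avoiding $(-\infty,0]$. Then $L$ is a polynomial in $M$, hence real, upper triangular, and in the incidence algebra, with zero row sums because $M\mathbf{1}=\mathbf{1}$ gives $L\mathbf{1}=\log(1)\mathbf{1}=0$. Set $W=\ee^{L/p}$; it is real, upper triangular, with unit row sums, diagonal $\lambda_{\!\cA}^{1/p}>0$, and $W^p=M$.

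The main obstacle is showing that $W$ is RMS. For this, consider the continuous one-parameter group $t\mapsto M_t:=\ee^{tL}$, with $M_1=M$ and $M_{k}=M^k$ RMS for integers $k$ (using Proposition~\ref{prop:inverse} for negative $k$). One option is to show that the set of RMS matrices with positive spectrum is a Lie subgroup with Lie algebra containing $L$. That would require identifying the tangent space, which the paper does only later. A more self-contained plan is as follows. The RMS condition is a finite system of polynomial identities $F_j(X)=0$ in the entries of $X$, namely $X_{\cA\cB}=\prod_{A\in\cA}(\text{marginal of row }\pmax)$. For fixed $j$, the function $t\mapsto F_j(\ee^{tL})$ is real-analytic in $t$ and, more strongly, a finite exponential polynomial $\sum_k c_k(t)\ee^{\nu_k t}$, where the $\nu_k$ are sums of logarithms of eigenvalues and the $c_k$ are polynomials. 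It vanishes at every integer $t$, because every $M^k$ is RMS. An exponential polynomial with real exponents vanishing on all of $\mathbb{Z}$ is identically zero: by grouping, it is $\sum_k c_k(t)\beta_k^{t}$ with distinct $\beta_k>0$, and a dominant-term growth argument as $t\to\pm\infty$ along integers forces all coefficients to vanish. Hence every $M_t$ is RMS, in particular $W=M_{1/p}$. Its parameter vector $w$ has positive spectrum $\lambda_{\!\cA}^{1/p}$ and satisfies $w^{\ts p}=r$.

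\emph{Uniqueness.} Suppose the spectrum of $r$ is simple and $w'$ is another solution with positive spectrum. Then $W'=M(w')$ is upper triangular with positive diagonal, and $(W')^p=M$. Its diagonal entries are the positive $p$-th roots of the $\lambda_{\!\cA}$, hence distinct, so $W'$ is diagonalisable with positive eigenvalues. It therefore has a real logarithm $L'$ with $\ee^{pL'}=M$. Since $M$ has simple positive spectrum, Culver's theorem (Fact~\ref{fact:Culver}) makes its real logarithm unique, so $pL'=L$ and $W'=\ee^{L/p}=W$. Hence $w'=w$.
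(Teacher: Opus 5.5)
Your proof is correct, but it takes a genuinely different route from the paper.

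\textbf{The paper's route.} It stays inside the $\bd$-monoid and runs the Newton iteration $w^{}_{m+1} = \frac{p-1}{p}\, w^{}_{m} + \frac{1}{p}\, r \bd w^{1-p}_{m}$ from $w^{}_{0}=\delta$. It uses the affine property \eqref{eq:convex} and Corollary~\ref{coro:r-inverse} to keep every iterate unit-sum with positive spectrum. Convergence is asserted ``by standard arguments'', with the characteristic factors following scalar Newton iterations. It then passes to the limit in the fixed-point equation. Uniqueness comes from the uniqueness of a $p$-th matrix root with positive spectrum \cite[Thm.~7.2]{Higham}, which is essentially your Culver-based argument.

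\textbf{Your route.} You build the root analytically as $\ee^{L/p}$, with $L$ the principal logarithm. You rightly note that the series of Lemma~\ref{lem:real-log-exists} need not converge for a general unit-sum $r$. You then certify membership in the RMS class by observing that the polynomial RMS constraints, evaluated on $\ee^{tL}$, are exponential polynomials with real exponents. These vanish on the integers, hence vanish identically.

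\textbf{What each buys.} The paper's argument is intrinsic and algorithmic: it works directly on parameter vectors and needs only \eqref{eq:convex} and inverses. However, its convergence step is only sketched and implicitly relies on the convergence theory of the matrix Newton $p$-th root iteration. Your argument needs no convergence analysis and proves more: $\ee^{tL}$ is RMS for every real $t$, whether or not the spectrum is simple. Differentiating at $t=0$ and applying the tangent computation preceding Proposition~\ref{prop:correct-form} then shows directly that the principal logarithm of any RMS matrix with positive spectrum is of RRS type. This bypasses the approximation-by-simple-spectra argument the paper uses for Proposition~\ref{prop:correct-form} and Theorem~\ref{thm:RMS-RRS}.

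A minor point: your dominant-term argument only needs $t\to+\infty$ along the positive integers, so the appeal to Proposition~\ref{prop:inverse} for negative powers is dispensable.
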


\begin{proof}
  Inspired by the methods from \cite[Chs.~6 and 7]{Higham}, we employ
  a convergent Newton-type algorithm. Let $p\geqslant 2$ be fixed, set
  $w^{}_{0} = \delta$, and define the iteration
\[
  w^{}_{m+1} \, = \, \tfrac{p-1}{p}\ts w^{}_{m} +
    \tfrac{1}{p} \ts r \bd \ts w^{1-p}_{m}
\]
for $m\geqslant 0$, where $x^{1-p} = (x^{\ts p-1})^{-1}$ is well
defined for any non-singular row vector with unit sum.  Note that the
right-hand side is a convex combination of unit sum vectors and
thus again a vector of this type.  Since $w^{}_{0}=\delta$ has
positive spectrum, and since having positive spectrum is preserved
under taking inverses by Corollary~\ref{coro:r-inverse}, it is clear
(by induction) that $w_m$ has positive spectrum for every $m\in\NN$ as
well.

The sequence $(w_m)^{}_{m\in\NN}$ converges (by standard arguments,
using a suitable vector norm), where the unit sum is preserved. In
the limit, one has
$w = \frac{p-1}{p} \ts w + \frac{1}{p} \ts r \bd \ts w^{1-p}$, which
gives $w^{\ts p} = \frac{p-1}{p} \ts w^{\ts p} + \frac{1}{p}\ts r$
upon right multiplication with $w^{\ts p-1}$ and an application of
\eqref{eq:convex}, and thus $w^{\ts p} = r$. Now, for each
$\varnothing \ne U \subseteq S$, the corresponding characteristic
factors of the $w_m$ satisfy a scalar Newton iteration of the same
form, all starting from $1$ and staying within $\RR_+$, so each such
sequence must converge to a non-negative real number, which is then
actually positive as well. This shows that $w$ is a $p\ts$-th root of
$r$ with positive spectrum.

For the claimed uniqueness, observe first that the unit sum vector
$r$ defines an RMS matrix with simple, positive spectrum, which is
diagonalisable and possesses a unique $p\ts$-th matrix root with
positive spectrum that is simple as well; see \cite[Thm.~7.2]{Higham}.
Since each $w$ from above defines an RMS matrix with positive spectrum
that is a $p\ts$-th matrix root of $M$ as well, only one such $w$ can
exist.
\end{proof}

\begin{remark}\label{rem:non-pos}
  When $r$ is a probability vector, it has positive spectrum if and
  only if $r(\pmax)>0$. For any integer $p\geqslant 2$, there is then
  a $p\ts$-th root with positive spectrum, but this need not be a
  probability vector.  In fact, Kingman's theorem \cite[Prop.~7]{King}
  implies that $M=M(r)$ is embeddable if and only if it has a
  $p\ts$-th Markov root for every $p\in\NN$. In other words, a
  probability vector $r$ parametrises an embeddable RMS Markov matrix
  if and only if, for every $p\in\NN$, it possesses a $p\ts$-th root
  in the sense of Lemma~\ref{lem:roots} that is a probability vector,
  meaning that it is infinitely divisible with respect to the product
  defined by $\bd$. Unfortunately, this does not lead to a practically
  useful criterion.  \exend
\end{remark}

The non-singular, upper-triangular matrices form a well-known Lie
group, where those with positive spectrum form a subgroup. Our above
results show that the RMS matrices with positive spectrum form yet
another subgroup, which contains all non-singular RMS Markov matrices.
We should thus expect that some interesting connections emerge via the
corresponding Lie algebra. Let us now develop this picture step by
step.

\subsection{Markov generators}

In continuous time, the recombination Markov generators $Q$ have a
related structure \cite{fast,EMB}. The free parameters are the entries
$Q^{}_{\pmax \cA} = \rho (\cA) = \rho^{}_{\! \cA}$ with
$\pmax \ne \cA \in \cP(S)$, which are the transition \emph{rates} from
$\pmax$ to $\cA$, while
$Q^{}_{\pmax \pmax} = - \sum_{\pmax\ne \cA\in\cP(S)} Q^{}_{\pmax \cA}$
ensures row sum $0$. To specify the other matrix entries, we introduce
the relation $\cB \klein \cA$ for the case that $\cB\ne\cA$ refines
precisely one part of $\cA$. Then, for $\cB \prec \cA$, we get from
\cite{fast,EMB} that
\begin{equation}\label{eq:Q-marg}
     Q^{}_{\! \cA \ts \cB} \, = \, \begin{cases}
     \rho^{A}_{\ts \cC} \ts , & \text{if } \cB \klein \cA \text{ with }
     \cC = \cB |_A  \ne \{ A \} \ts ,
        \\  0 \ts , & \text{otherwise} \ts , \end{cases}
\end{equation}
with the marginal rates
\begin{equation}\label{eq:marg-rates}
    \rho^{A}_{\ts \cC} \, = 
    \sum_{\substack{\cD \in \cP (S) \\ \cD |_A = \cC}} \rho(\cD)
\end{equation}
for all $\cC \in \cP (A)$ with
$\cC \ne \{ \nts A \} = \pmax^{\nts A}$. One can verify that all row
sums of $Q$ are $0$, as a consequence of this property for $\rho$.
The diagonal entries of $Q$ are again its eigenvalues,
\begin{equation}\label{eq:eigen-Q}
   \mu^{}_{\cA} \, = \, Q^{}_{\! \cA \ts \cA} \, = \, - \sum_{\cB \prec \cA}
   Q^{}_{\! \cA \ts \cB} \, = \, - \!\sum_{\cB\in\cP(S)}\! \rho (\cB) \,
   \card \bigl\{ A\in\cA : \cB |_A \ne \{ \nts A \} \bigr\} ,
\end{equation}
with $\cA \in \cP(S)$, where the last representation follows from a
simple combinatorial calculation. In particular, one has
$\mu^{}_{\pmax} = 0$.  Note that the eigenvalues of $Q$ are
\emph{linear} in the parameters. It is now natural to introduce the
following notion.

\begin{definition}
  Let\/ $S = \{ 1, 2, \ldots, n\}$ with\/ $n\in\NN$ be fixed and set\/
  $\dm = \bell_n$. A real, upper-triangular matrix\/ $Q$ is said to
  have \emph{recombination rate structure}, or RRS for short, if it
  has zero row sums and if all entries of\/ $Q$ emerge from its first
  row via \eqref{eq:Q-marg} and \eqref{eq:marg-rates}. Its first row,
  denoted by\/ $\rho$, it its \emph{parameter vector}.
  
  If, in addition, $Q$ is a Markov generator, it is called an RRS 
  Markov generator.
\end{definition}

It is not obvious why RRS matrices form the correct counterpart to
non-singular RMS matrices with positive spectrum. For Markov matrices,
this follows from the probabilistic derivation in
\cite{fast,EMB}. Here, we give an independent argument as
follows. Assume that a one-parameter family
$\{ M(t) : 0 \leqslant t < \varepsilon \}$ of RMS
matrices\footnote{The time dependence for this short argument is
  written as $M(t)$, which should be distinguished from the parameter
  dependence $M(r)$ used elsewhere.} is given, for some
$\varepsilon > 0$, with $M(0)=\one$ and differentiability near
$0$. So, $M(t)$ has parameters $r(\cA, t)$ for $\cA\in\cP (S)$ and
small $t\geqslant 0$, with $r (\cA, 0) = \delta_{\nts\cA, \pmax}$ and
$\rho (\cA) \defeq \dot{r} (\cA , 0)$, where we use $\dot{r}$ for the
time derivative of $r$, and similarly for other quantities.

Now, setting
$Q^{}_{\! \cA \ts \cB} \defeq \dot{M}^{}_{\! \cA \ts \cB} (0)$, it is
clear that $Q^{}_{\! \cA \ts \cB} =0$ whenever
$\cB \not \preccurlyeq \cA$. Next, when $\cB \prec \cA$,
Eqs.~\eqref{eq:M-marg} and \eqref{eq:marg-prob} via the product rule
imply that
\[
    Q^{}_{\! \cA \ts \cB} \, \defeq \, \dot{M}^{}_{\! \cA \ts \cB} (0) \, = 
    \sum_{A\in\cA} \dot{r}^{\ts A}_{\cB |_A}  (0) \!
      \prod_{A' \in \cA \setminus A} \! r^{\ts A'}_{\cB |_{A'}} (0)
\]
with
\[    
   \dot{r}^{\ts A}_{\cB |_A} (0) \, = \! \sum_{\substack{\cC\in\cP (S) \\
       \cC |_A = \cB |_A}} \!  \rho (\cC) \qquad \text{and} \qquad
       r^{\ts A'}_{\cB |_{A'}} (0) \, = \,  \begin{cases} 1 \ts , &
       \text{if } \cB |_{A'} = \{ A' \} ,  \\ 0 \ts , &
       \text{otherwise} \ts .  \end{cases}
\]
The case distinction is an easy consequence of the marginalisation
formula in conjunction with $r (\cA, 0) = \delta_{\nts \cA ,
  \pmax}$. Since $\cB \prec \cA$ means that $\cB$ must split at least
one part of $\cA$, we only get a contribution when $\cB\klein\cA$, and
$0$ otherwise.

To determine the remaining elements $Q^{}_{\!\cA\ts\cA}$, we observe
that $\sum_{\cB\in\cP(S)} M^{}_{\!\cA\ts\cB} (t) = 1$ implies the row
sum condition
$\sum_{\cB\in\cP(S)} Q^{}_{\!\cA\ts\cB} = \sum_{\cB\in\cP(S)}
\dot{M}^{}_{\!\cA\ts\cB} (0) = 0$ and thus, via upper triangularity,
the relation
$Q^{}_{\!\cA\ts\cA} = -\sum_{\cB\prec\cA} Q^{}_{\!\cA\ts\cB}$.
Putting the pieces together, we see that the tangent to $M(t)$ at $0$
must be an RRS matrix.  When $M(t)$ is Markov, as assumed, all
$\rho (\cA)$ with $\cA\ne\pmax$ must be non-negative, because
$M(0)=\one$ and the parameter vector of $M(t)$ cannot get negative
entries with increasing $t$. So, $Q$ is then a Markov generator, due
to \eqref{eq:marg-rates}. When we allow $M(t)$ to be a general,
non-singular RMS matrix, the tangent element still satisfies
Eqs.~\eqref{eq:Q-marg} and \eqref{eq:eigen-Q} with
\eqref{eq:marg-rates}, so is RRS, but need no longer be a Markov
generator. In particular, the parameter vector has zero row sum, but
the $\rho (\cA)$ need not be non-negative for $\cA\ne\pmax$.

If we now assume that a non-singular RMS Markov matrix $M$ has a real
logarithm $R$, we may consider $M(t) = \ee^{t R}$ with $M(0)=\one$ and
$R = \dot{M} (0)$. Note that $M(t)$ for $t\ne 1$ need not be
Markov. Now, let $M$ have simple spectrum. Then, $\ee^{t R}$ is RMS
for all $t \geqslant 0$, which can be seen as follows. Clearly, $M$
has distinct, positive eigenvalues, and is upper triangular. Then, for
any $m\geqslant 2$, it has a unique $m$-th root in upper-triangular
form with positive eigenvalues, by an application of
\cite[Thm.~7.2]{Higham}. It can easily be calculated as
$M^{1/m} = T D^{1/m} T^{-1}$ where $D$ is the diagonal of $M$ and $T$
is an invertible upper-triangular matrix that columnwise contains the
eigenvectors of $M$.

On the other hand, the parameter vector for $M$ has positive spectrum,
so Lemma~\ref{lem:roots} tells us that there is a unit sum vector
$w$ with positive spectrum such that $w^m = r$. This $w$ defines an
RMS matrix whose $m$-th power is $M$, due to multiplicative closure,
so defines an $m$-th root of $M$ with positive eigenvalues.  But there
is only one $m$-th root of $M$ with positive eigenvalues by
\cite[Thm.~7.2]{Higham}, because we assumed $M$ to have simple
spectrum, so they must agree, and this root must equal $\ee^{R/m}$.

Consequently, all rational powers of $\ee^R$ are of the right form,
hence also $\ee^{tR}$ for all $t\geqslant 0$, by a standard continuity
argument. Now, we can invoke the above calculations to see that the
derivative at $0$, which is $R$, has the correct form, and we may
conclude as follows.

\begin{prop}\label{prop:correct-form}
  Let\/ $M$ be a non-singular RMS Markov with simple, positive
  spectrum.  Then, it has a unique real matrix logarithm, $R$.
  Further, this\/ $R$ satisfies the linear conditions of
  Eq.~\eqref{eq:Q-marg}, and thus is a matrix of RRS type.

  More generally, this applies to any RMS matrix with simple,
  positive spectrum. \qed
\end{prop}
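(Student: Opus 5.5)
The plan is to assemble the argument sketched just before the statement into four steps: existence and uniqueness of the real logarithm, the RMS property of the one-parameter group $\ee^{tR}$, and the differentiation at $t=0$. Only the second step needs care.

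\emph{Step 1: uniqueness of $R$.} Let $M$ be an RMS matrix, Markov or not, with simple, positive spectrum. It is upper triangular, and all its eigenvalues (the diagonal entries) are distinct and positive. By Fact~\ref{fact:Culver}, $M$ then has a unique real logarithm $R$. It coincides with the principal logarithm and is upper triangular, because it is a polynomial in $M$ via the Vandermonde construction around \eqref{eq:real-log}. For Markov $M$ this is just Lemma~\ref{lem:real-log-exists}. For general RMS $M$ the same reasoning applies, since Lemma~\ref{lem:real-log-exists} only used the positivity and simplicity of the diagonal.

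\emph{Step 2: the roots $\ee^{R/m}$ are RMS.} Fix $m\geqslant 2$. Since $M$ has simple positive spectrum, \cite[Thm.~7.2]{Higham} gives a unique $m$-th root of $M$ with positive spectrum, and $\ee^{R/m}$ is such a root. On the other side, the parameter vector $r$ of $M$ has positive spectrum in the sense of Definition~\ref{def:spec}, so Lemma~\ref{lem:roots} supplies a unit sum vector $w$ with positive spectrum and $w^m=r$. By the multiplicative closure of RMS matrices (Proposition~\ref{prop:Markov-closure} and its extension to unit row sum vectors), $M(w)^m=M(w^m)=M(r)=M$. Hence $M(w)=\ee^{R/m}$. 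Taking integer powers shows $\ee^{(k/m)R}=M(w^k)$ is RMS for all $k,m\in\NN$. The set of RMS matrices is closed, being cut out by polynomial identities in the entries. Continuity of $t\mapsto\ee^{tR}$ therefore gives that $\ee^{tR}$ is RMS for all real $t\geqslant 0$.

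\emph{Step 3: differentiation.} Let $r(\cdot,t)$ be the first row of $\ee^{tR}$. It is smooth in $t$, satisfies $r(\cA,0)=\delta_{\cA,\pmax}$, and determines the whole matrix via \eqref{eq:M-marg} and \eqref{eq:marg-prob}. Set $\rho=\dot r(\cdot,0)$, the first row of $R$. Differentiating \eqref{eq:M-marg} at $t=0$ with the product rule is exactly the computation preceding the statement. For $\cB\prec\cA$, only terms with a single factor differentiated survive, and these are nonzero only if $\cB\klein\cA$. The surviving term equals the marginal rate $\rho^{A}_{\cB|_A}$ from \eqref{eq:marg-rates}, which is \eqref{eq:Q-marg}. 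The diagonal is fixed by the zero row sums, inherited from the unit row sums of $\ee^{tR}$. So $R$ is RRS, which proves the claim in both the Markov and the general setting.

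\emph{Main obstacle.} I expect the main obstacle to be Step 2, namely identifying the abstract $\bd$-root with the matrix root. The proof needs the uniqueness part of \cite[Thm.~7.2]{Higham} to apply to the positive-spectrum root $M(w)$. For this it must be checked that the spectrum of $M(w)$ is positive, which holds because its eigenvalues are the $\lambda_{\cA}$ of $w$. The required positivity is exactly what Lemma~\ref{lem:roots} guarantees.
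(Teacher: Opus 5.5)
Your proof is correct and follows the paper's own argument essentially step by step. It uses Culver's theorem for uniqueness, identifies $M(w)$ from Lemma~\ref{lem:roots} with $\ee^{R/m}$ via the uniqueness of positive-spectrum roots in \cite[Thm.~7.2]{Higham}, uses closedness of the RMS set to pass from rational to all $t\geqslant 0$, and ends with the tangent computation at $t=0$.

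One harmless inaccuracy: Lemma~\ref{lem:real-log-exists} also uses $\lambda\leqslant 1$ for convergence of the series, which can fail for non-Markov RMS matrices. Your justification that the real logarithm is a polynomial in $M$ covers that case anyway.
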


Since non-singular, upper-triangular Markov matrices must have
positive eigenvalues, the existence of a real logarithm is clear, and
the latter is unique when no elementary Jordan block is repeated.
Indeed, we have the following extension of
Theorem~\ref{thm:real-log-and-gen} and
Proposition~\ref{prop:correct-form}.

\begin{theorem}\label{thm:RMS-RRS}
  Let\/ $M$ be a non-singular RMS matrix with positive spectrum, which
  includes the case that\/ $M$ is an RMS Markov matrix with\/
  $\det(M) >0$. Then, $M$ has a real logarithm\/ $R$ of RRS type.
  
  Further, when no elementary Jordan block of the JNF of\/ $M$ over\/
  $\CC$ occurs more than once, $R$ is unique and upper
  triangular. If\/ $M$ is also Markov, it is then embeddable if and
  only if this\/ $R$ is a Markov generator.

  More generally, in the case of repeated Jordan blocks, where further
  real logarithms of\/ $M$ exist, no other one is upper triangular,
  hence not of RRS type.
\end{theorem}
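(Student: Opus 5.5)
The plan is to reduce everything to the simple-spectrum case of Proposition~\ref{prop:correct-form} by a perturbation-and-limit argument, and then to handle uniqueness with Lemma~\ref{lem:J-block} and Fact~\ref{fact:Culver}.

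\emph{Existence.} Let $M=M(r)$ with positive spectrum, so all characteristic factors satisfy $\phi^{}_U>0$. Since $M$ is upper triangular with positive diagonal, it has a principal logarithm. In the upper-triangular (eigen)basis adapted to the block structure of the JNF, this is the upper-triangular logarithm $R=\log(M)$, given by the Cauchy integral or Hermite interpolation formula. In particular, $R$ is a polynomial in $M$, hence upper triangular, and it has zero row sums because $M$ has unit row sums.

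To see that $R$ is RRS, approximate $r$ by unit-sum vectors $r_k\to r$ with positive and simple spectrum. This requires that the spectrum of a parameter vector be generically simple. The eigenvalues $\lambda^{}_{\!\cA}=\prod_{A\in\cA}\phi^{}_A$ are polynomials in $r$ that are pairwise distinct as functions, since distinct partitions use different multisets of characteristic factors. One checks this by choosing parameters for which the $\log\phi^{}_U$ are generic, for instance with the $\phi^{}_U$ nearly independent. Hence the coincidence set is a proper algebraic subset of the hyperplane, and its complement is dense.

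For each $k$, Proposition~\ref{prop:correct-form} gives the unique real logarithm $R_k$ of $M(r_k)$, which is RRS. Moreover, $R_k$ is the principal logarithm, which is continuous on matrices with spectrum in $(0,\infty)$. Therefore $R_k\to\log M = R$. The RRS conditions \eqref{eq:Q-marg} and \eqref{eq:marg-rates} are linear and hence closed, so $R$ is of RRS type.

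\emph{Uniqueness.} Assume no elementary Jordan block repeats. Since $\sigma(M)\subset\RR_+$, Fact~\ref{fact:Culver} gives a unique real logarithm, which must be the $R$ above, and it is upper triangular. If $M$ is Markov, then $M$ is embeddable if and only if some real logarithm is a Markov generator. As $R$ is the only real logarithm, this happens exactly when $R$ is a Markov generator.

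\emph{Repeated blocks.} Now suppose Jordan blocks repeat, and let $R'$ be any upper-triangular real logarithm. The plan is to show $R'=R$. Both $R$ and $R'$ commute with $M=\ee^{R'}$, and both lie in the algebra of upper-triangular matrices. Conjugating by an upper-triangular $T$ that brings $M$ to an upper-triangular block form grouped by eigenvalues keeps $R'$ upper triangular. Within each eigenvalue group, the diagonal blocks of $R'$ must be logarithms of the Jordan blocks, and Lemma~\ref{lem:J-block} forces them to be the $L_{k_i}$ with vanishing off-diagonal blocks. Between different eigenvalue groups, commutation with $M$ (Sylvester's equation, whose solution is unique for distinct spectra) pins down the remaining blocks as those of $\log M$. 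Hence $R'=R$, and every other real logarithm fails to be upper triangular and so cannot be RRS.

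\emph{Main obstacle.} The hard step is this last reduction. The Jordan basis need not be reachable by upper-triangular conjugation compatible with the triangular ordering of partitions, so the argument may have to run directly within the algebra of upper-triangular matrices using the Sylvester-equation step. A secondary but needed point is proving the generic simplicity of the spectrum.
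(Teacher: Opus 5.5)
Your existence argument (density of simple spectrum, Proposition~\ref{prop:correct-form} for the approximants, continuity of the principal logarithm, closedness of the linear RRS conditions) is essentially the paper's. So is your uniqueness argument via Fact~\ref{fact:Culver}. For the density claim, the clean justification is this: the functions $\phi^{}_U$ with $|U|\geqslant 2$ are affinely independent on the unit-sum hyperplane (test against $e^{}_{\pmin}$ and the unit vectors of the partitions $\{V\}\sqcup\bigl\{\{i\}: i\notin V\bigr\}$). Since a partition is determined by its non-singleton parts, the $\lambda^{}_{\!\cA}$ are then distinct monomials in free local coordinates.

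\textbf{The gap.} It sits in the repeated-block step, where you suspected, and the Sylvester idea does not close it. Conjugation by an upper-triangular $T$ leaves the diagonal of $M$ unchanged \emph{in its given order}. Equal eigenvalues therefore cannot be made adjacent when a partition with a different eigenvalue lies between them, and this does occur. Take $r = p\, e^{}_{1234} + q\, e^{}_{3|124} + (1-p-q)\, e^{}_{1|2|3|4}$ with $p>0$ and $p+q<1$. Then $\lambda^{}_{1234} = \lambda^{}_{1|2|34} = p \ne p(p+q) = \lambda^{}_{12|34}$, and $12|34$ lies between the other two in every ordering compatible with refinement.

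What upper-triangular conjugation \emph{can} do is kill all entries $(i,j)$ with $M_{ii}\ne M_{jj}$, working by increasing $j-i$; this is the real content of your Sylvester step. However, commutation with $M$ gives no information on the entries of $R'$ that link equal eigenvalues. On a single eigenvalue you are left with $\lambda\one+N$, where $N$ is strictly upper triangular. Such an $N$ need not be upper-triangularly conjugate to Jordan form: in size $3$, the matrix unit $E^{}_{13}$ is only rescaled. So Lemma~\ref{lem:J-block} is not applicable. The paper's proof takes the same route and just asserts that an upper-triangular $T$ with neighbouring equal-eigenvalue blocks exists, so the same objection applies to it.

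\textbf{The missing idea.} It makes all of this unnecessary. If $R'$ is real and upper triangular with $\ee^{R'}=M$, its eigenvalues are its diagonal entries, hence real. They therefore lie in the strip $\{z : |\mathrm{Im}\, z| < \pi\}$. The principal logarithm is the unique logarithm of $M$ with spectrum in this strip \cite[Thm.~1.31]{Higham}, so $R'=\log(M)=R$. This proves the last claim for every $M$ with positive spectrum, with no Jordan-form bookkeeping.

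If you prefer to finish your own route: after the decoupling, the conjugated $R'$ commutes with the conjugated $M$. It therefore vanishes at the entries linking different eigenvalues. On the block of $\lambda$ it equals $\log(\lambda)\one$ plus a strictly upper-triangular matrix. Since $\exp$ is injective on nilpotent matrices, that block is uniquely determined.
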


\begin{proof}
  Assume first that $M$ has a real logarithm of RRS type.  Then, the
  uniqueness claim in the absence of repeated Jordan blocks is a
  consequence of Culver's theorem (Fact~\ref{fact:Culver}). In fact,
  the unique real matrix logarithm of $M$ must then be the principal
  matrix logarithm, which always exists under our assumption on
  positive spectrum. This can be seen via the series from
  Lemma~\ref{lem:real-log-exists} (when it converges, as it does for
  $M$ being Markov) or from the general integral formula
  \cite[Thm.~11.1]{Higham},
  $ L = \log (M) = \int_{0}^{1} \bigl( t M + (1-t) \one\bigr)^{-1}
  (M-\one) \, \mathrm{d} \ts t $.  Clearly, $L$ is real and inherits
  upper triangularity from $M$. When $M$ is also Markov, since we have
  $M=\ee^L$ and $L$ is unique, the embeddability claim is clear.

  The existence of a real logarithm of RRS type, $R$ say, for
  RMS matrices with simple, positive spectrum follows from
  Proposition~\ref{prop:correct-form}. As the principal matrix
  logarithm $L$ of $M$ is the only real logarithm in this case, which
  is also upper triangular, it must be this RRS matrix, so $R=L$.  
  When the positive spectrum of $M$ has degeneracies, it remains to
  show that $L$ still is an upper-triangular real logarithm of RRS
  type, and that it is the only one.
  
  If $M$ is an RMS matrix with positive spectrum, with some degeneracy
  say, any neighbourhood of $M$ within the RMS matrices will contain
  RMS matrices with simple spectrum. This immediately follows from
  the dependence of $M(r)$ and its eigenvalues on the parameter
  vector $r$ in Eq.~\eqref{eq:eigen-M}. So, by using sufficiently small
  neighbourhoods, we can make sure that the approximating matrices
  all have simple, positive spectrum. Each such 
  matrix has its principal matrix logarithm as its
  unique real logarithm, which is of RRS type. So, we can select a
  sequence of such matrices that converge to $M$. Since $M$ has
  positive spectrum, the corresponding sequence of generators also
  converges, to an upper-triangular matrix of RRS type that is the
  principal matrix logarithm $L$ of $M$.
  
  Clearly, $L$ is unique when no elementary Jordan block of $M$ occurs
  more than once. Moreover, it is always the only upper-triangular
  one. To see this, bring $M$ to its JNF,
  $J^{}_{M} = T \nts M \ts T^{-1}$, such that blocks with the same
  $\lambda$ are aligned as neighbours. This is possible, and $T$ can
  be chosen to be upper triangular. Assume $M = \ee^Q$ with $Q$ upper
  triangular. Since $[M,Q]=0$, we know that $Q$ respects the
  generalised eigenspaces of $M$, because $[(M-\lambda \one)^k,Q]=0$
  holds for all $\lambda \in \sigma (M)$ and $k\in\NN$.
  
  On the other hand, also $T Q \ts T^{-1}$ is still upper triangular,
  and consists of (bigger) diagonal blocks according to the
  generalised eigenspaces of $M$. Each such block has itself a block
  structure with elementary Jordan blocks for the same eigenvalue. We
  can now apply Lemma~\ref{lem:J-block} to see that we indeed have a
  unique upper-triangular real logarithm. This structure is preserved
  under transforming back to $M$ and $Q$, so $Q=L$ follows.
  
  Since any RRS matrix is upper triangular, the last claim is clear.
\end{proof}

It remains to better understand what happens for an RMS Markov matrix
$M = M(r)$ with $\sigma (M) \subset \RR_{+}$ and repeated Jordan
blocks. Invoking the approximation argument from the proof of
Theorem~\ref{thm:RMS-RRS}, let $(r_m)^{}_{m\in\NN}$ be a sequence of
probability vectors with $\lim_{m\to\infty} r_m = r$ and the property
that the spectrum of each $r_m$ is simple, which certainly
exists. Then, $\lim_{m\to\infty} M( r_m) = M$, while each $M (r_m)$
has a unique real logarithm, $L_m$ say, which is its principal
logarithm and of RRS type. Both properties are preserved in the limit,
thus hold for $L = \lim_{m\to\infty} L_m$. When all $L_m$ are Markov
generators, then so is $L$.

The remaining question is whether a non-singular $M$ can be embeddable
without being RRS embeddable. This cannot happen when no elementary
Jordan block for any of its (necessarily positive) eigenvalues is
repeated, because $L$ is then the only real logarithm of $M$. So,
consider an RMS Markov matrix $M=M(r)$ with a repeated Jordan block,
and assume it satisfies $M=\ee^Q$ for some Markov generator, which is
then one of many real logarithms of $M$. Via a small perturbation of
$r$, we see that arbitrarily close to $M$ are RMS Markov matrices
without repeated Jordan blocks. Via \cite[Prop.~4]{King} and
\cite[Thm.~7]{Davies}, there must also be embeddable ones (even with
simple spectrum), then via their principal matrix logarithms, which
are upper triangular and of RRS type. By a standard limit argument, we
then get $M = \ee^L$ and thus, via Theorem~\ref{thm:RMS-RRS}, the
following result.

\begin{coro}\label{coro:RMS-RRS}
  Let\/ $M\in\cM_d$ with\/ $\dm = \bell_n$ be an RMS Markov matrix
  with positive spectrum, not necessarily simple.  Then, the following
  properties are equivalent.
\begin{enumerate}\itemsep=2pt  
\item $M$ is embeddable.
\item $M$ is embeddable with an upper-triangular Markov generator.
\item $M$ is embeddable with a Markov generator of RRS type.
\item The principal matrix logarithm\/ $L$ of\/ $M$ is a Markov
  generator.
\end{enumerate}  
In this case, the embedding is unique in the sense that\/ $L$ is the
only upper-triangular real matrix logarithm of\/ $M$. \qed
\end{coro}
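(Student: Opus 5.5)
The trivial implications come first. We have (4) $\Rightarrow$ (3) $\Rightarrow$ (2) $\Rightarrow$ (1) together with (4) $\Rightarrow$ (2). If $L$ is a Markov generator, then $M=\ee^L$ is an embedding. By Theorem~\ref{thm:RMS-RRS}, $L$ is the upper-triangular real logarithm of RRS type, which gives (3). Every RRS matrix is upper triangular, which gives (2), and (2) clearly implies (1).

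Next comes (2) $\Rightarrow$ (4). Suppose $M=\ee^Q$ with $Q$ an upper-triangular Markov generator. By the last part of Theorem~\ref{thm:RMS-RRS}, via the Jordan block argument built on Lemma~\ref{lem:J-block}, the principal logarithm $L$ is the only upper-triangular real logarithm of $M$. Hence $Q=L$, and $L$ is a Markov generator. The same statement gives the uniqueness claim at the end of the corollary.

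The substantive step is (1) $\Rightarrow$ (4). Here $M=\ee^Q$ for some Markov generator $Q$, not necessarily triangular. If no elementary Jordan block of $M$ is repeated, then $L$ is the unique real logarithm by Fact~\ref{fact:Culver}, so $Q=L$ and we are done. In general I would approximate, as in the discussion preceding the corollary. Write $M=M(r)$. Then $\ee^{tQ}$ is Markov for every $t\ge 0$, and the plan is to find RMS Markov matrices $M(r_m)\to M$ that have simple positive spectrum and are themselves embeddable. Kingman's characterisation (\cite[Prop.~4]{King}) and Davies' result (\cite[Thm.~7]{Davies}) say that embeddable matrices are suitably dense near an embeddable one, and I would use them to pick such $r_m$ inside the RMS family. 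For each $m$, the unique real logarithm $L_m$ of $M(r_m)$ is its principal logarithm, which is upper triangular and of RRS type by Proposition~\ref{prop:correct-form}. Since $M(r_m)$ is embeddable, $L_m$ must be a Markov generator.

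Finally, pass to the limit. The principal logarithm is continuous on matrices with positive spectrum, for instance through the integral formula \cite[Thm.~11.1]{Higham}, which stays well defined because $tM+(1-t)\one$ is invertible. Hence $L_m\to L$. Non-negativity of the off-diagonal entries and the zero row sums survive the limit, so $L$ is a Markov generator, which is (4).

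The main obstacle is the approximation step: producing embeddable RMS Markov matrices with simple spectrum arbitrarily close to $M$. Generic perturbations of $r$ do separate the eigenvalues $\lambda_{\cA}$, but they need not keep the perturbed matrix embeddable. I would first try perturbing along the semigroup itself, using $M(r_m)=\ee^{t_mQ}\,M(s_m)$ with $M(s_m)$ an embeddable RMS matrix near $\one$ that has simple spectrum, and rely on the closure results (Proposition~\ref{prop:Markov-closure}). If that does not work, I would invoke the cited results of Kingman and Davies directly.
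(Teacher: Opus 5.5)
You follow the paper's route. The easy implications and (2)$\Rightarrow$(4) come from the uniqueness part of Theorem~\ref{thm:RMS-RRS}, and (1)$\Rightarrow$(4) comes from approximating $M$ by embeddable RMS matrices with simple spectrum, citing Kingman and Davies. The approximation step, which you rightly flag as the crux, is not secured by what you propose; the paper's own sketch is equally brief there.

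Your first idea fails. $\ee^{t_m Q}$ is only known to be RMS once $Q$ is of RRS type, which is what you want to prove. Even for $t_m=1$, the product $M\,M(s_m)=\ee^{Q}\,\ee^{X_m}$ is RMS by Proposition~\ref{prop:Markov-closure}, but it need not be embeddable, because $Q$ and $X_m$ need not commute.

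The fallback does not help either. Density results of the Kingman--Davies type concern embeddable matrices in all of $\cM_{\dm}$. Nothing in them puts the approximants inside the RMS family or gives them real spectrum. Without that, the Markov generators of the approximants need not be their principal logarithms, so passing to the limit need not produce $L$. In the scenario you must exclude, $Q\neq L$, the generator $Q$ has non-real eigenvalues, since a real logarithm with real spectrum is the principal one. The obvious approximants $\ee^{Q'}$ with $Q'\to Q$ then have generators converging to $Q$, not to $L$.

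The missing idea is a zero-pattern argument that makes the approximation unnecessary. Let $Q$ be a Markov generator and $c\geqslant \max_{\cA}\lvert Q_{\cA\cA}\rvert$. Then $Q+c\one$ is entrywise non-negative, so $\ee^{Q}=\ee^{-c}\,\ee^{Q+c\one}\geqslant \ee^{-c}\,(\one+Q+c\one)$ entrywise. Hence $(\ee^{Q})_{\cA\cB}\geqslant \ee^{-c}\,Q_{\cA\cB}$ for $\cA\neq\cB$. So $M=\ee^{Q}$ forces $Q_{\cA\cB}=0$ wherever $M_{\cA\cB}=0$, in particular whenever $\cB\not\preccurlyeq\cA$. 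Every Markov generator of $M$ is therefore upper triangular, which gives (1)$\Rightarrow$(2) directly. Your (2)$\Rightarrow$(4) step then yields $Q=L$.

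That last step can also be made self-contained. An upper-triangular real logarithm $R$ of $M$ has real spectrum and commutes with $M$. So on each generalised eigenspace of $M$ for $\lambda$, $R$ equals $\log(\lambda)\one+N$ with $N$ nilpotent and $\ee^{N}$ prescribed. Hence $R$ coincides with $L$.
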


For the practical computation of the principal matrix logarithm in the
generic case, we refer back to Remark~\ref{rem:cyclic-1}, though this
will be of limited value in view of the rapid growth of
$\dm = \bell_n$ as a function of $n$.  Whenever repeated Jordan blocks
show up, there will be further real logarithms that are not of
upper-triangular form. However, in view of the particular structure of
the recombination process with the partition lattice, such cases are
of limited interest and thus not considered here.

We are now ready to embark on an investigation of recombination
matrices and their algebraic and embedding structure.

\section{Recombination for two and three sites}\label{sec:2-3}

Let us begin with the simplest recombination scheme, with two sites,
where we write $12$ and $1|2$ for the two possible partitions.  In
discrete time, the Markov transition graph is
\begin{equation}\label{eq:d-2}
\raisebox{-13pt}{\small
\begin{tikzpicture}[->,>=stealth',shorten >=1pt,auto,node
    distance=3cm, semithick]
   \tikzstyle{every state}=[fill=none,text=black]
   \node[state] (A) {$\, 12 \, $};
   \node[state] (B) [right of=A] {$1 | 2$};
   \path
   (A) edge [loop left]   (A)
   (B) edge [loop right]  (B)
   (A) edge [bend left] node [below] {$r^{\phantom{1}}_{1|2}$} (B);
\end{tikzpicture}}
\end{equation}
and has only one free parameter, $a=r^{}_{1|2} \in [0,1]$, where here
and below a loop at a node represents the remaining probability. The
corresponding Markov matrix with parameter vector $r$ (according to
Eq.~\eqref{eq:M-marg} and Definition~\ref{def:RMS}) reads
\begin{equation}\label{eq:2-sites}
    M \, = \, M(r) \, = \,
    \begin{pmatrix} 1{-} \ts a & a \\  0 & 1 \end{pmatrix} ,
\end{equation}
so $r = (1-a,a)$, which is embeddable for $0 \leqslant a < 1$ by
Fact~\ref{fact:King}. Indeed, observing that
\[
  \exp \begin{pmatrix}  - \alpha & \alpha \\ 0 & 0
  \end{pmatrix} \, = \, \begin{pmatrix} 
  \ee^{-\alpha} & 1 {-} \ts \ee^{-\alpha} \\ 0 & 1 \end{pmatrix} ,  
\]
one sees that $a = 1 - \ee^{-\alpha}$, hence $\alpha = - \log (1-a)$,
is the relation between the probability $a$ in discrete time and the
rate $\alpha$ in continuous time. The embedding in this case is unique
by Fact~\ref{fact:King}, where $M = \ee^Q$ with
\begin{equation}\label{eq:Q-2-sites}
  Q \, = \, - \log (1\nts -a) \begin{pmatrix}  - 1 & 1 \\ 0 & 0
  \end{pmatrix} \, = \, - \frac{\log (1\nts -a)}{a}
  \bigl( M \nts - \one \bigr)  .
\end{equation}
This consequence of Kendall's theorem (Fact~\ref{fact:King}) can 
be summarised as follows.

\begin{coro}\label{coro:2-sites}
  The Markov matrix\/ $M$ from~\eqref{eq:2-sites} for recombination
  at\/ $2$ sites is embeddable if and only if it is non-singular,
  which is equivalent to\/ $0\leqslant a < 1$. In this case, one has\/
  $M=\ee^Q$ with the generator from~\eqref{eq:Q-2-sites}, and the
  embedding is unique.  \qed
\end{coro}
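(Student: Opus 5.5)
The plan is to reduce Corollary~\ref{coro:2-sites} to Kendall's theorem (Fact~\ref{fact:King}). Then I will double-check the answer against the general RMS machinery of Section~\ref{sec:reco-part}.

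\emph{Embeddability criterion.} The matrix in \eqref{eq:2-sites} has the form $\left(\begin{smallmatrix} 1-a & a \\ b & 1-b\end{smallmatrix}\right)$ with $b=0$. Fact~\ref{fact:King} therefore says that $M$ is embeddable if and only if $0\leqslant a+b<1$, that is, $0\leqslant a<1$. Since $\det(M)=1-a$, this is exactly the condition $\det(M)\neq 0$, i.e.\ non-singularity; for $a\in[0,1]$ it is also equivalent to $\det(M)>0$. The same equivalence is recorded in Remark~\ref{rem:non-singular}, because $\lambda_{\pmax}=r(\pmax)=1-a$ and $\lambda_{1|2}=1$.

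\emph{Form of the generator and uniqueness.} Fact~\ref{fact:King} also gives the embedding explicitly and uniquely: $Q=-\frac{\log(1-a)}{a}(M-\one)$, with $Q=0$ when $a=0$ (interpreting $-\log(1-a)/a$ as $1$ in the limit, which is harmless since $M-\one=0$ there). Writing $M-\one=a\left(\begin{smallmatrix}-1&1\\0&0\end{smallmatrix}\right)$ turns this into the first expression in \eqref{eq:Q-2-sites}. As an independent check, exponentiating $\left(\begin{smallmatrix}-\alpha&\alpha\\0&0\end{smallmatrix}\right)$ with $\alpha=-\log(1-a)\geqslant 0$ recovers $M$, and this matrix is a Markov generator of RRS type.

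\emph{Alternative uniqueness argument.} Uniqueness also follows from Culver's theorem (Fact~\ref{fact:Culver}) when $a>0$: the eigenvalues $1-a$ and $1$ are distinct and positive, so the real logarithm is unique. For $a=0$ we have $M=\one$, whose only Markov-generator logarithm is $0$. Strictly, $\one$ has many real logarithms (rotation generators), but none of them is a rate matrix, which is Kendall's statement in the degenerate case; Fact~\ref{fact:King} already covers this.

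\emph{Expected difficulty.} No step is a real obstacle. The only delicate point is the degenerate case $a=0$, where the uniqueness claim must be read as uniqueness among Markov generators, exactly as Fact~\ref{fact:King} phrases it.
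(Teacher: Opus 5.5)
Your proposal is correct and takes the same route as the paper: the paper reads the corollary off Kendall's theorem (Fact~\ref{fact:King}) with $b=0$, and confirms the generator by exponentiating $\left(\begin{smallmatrix} -\alpha & \alpha \\ 0 & 0 \end{smallmatrix}\right)$, which gives $a = 1-\ee^{-\alpha}$. Your extra remarks via Culver's theorem and on the case $a=0$ are accurate but not needed; in the latter case uniqueness holds among rate matrices, since $\ee^{Q}=\one$ forces $\mathrm{tr}(Q)=0$ and hence $Q=0$.
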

  
\begin{figure}
\begin{tikzpicture}[->,>=stealth',shorten >=1pt,auto,node
      distance=3cm,semithick]
 \tikzstyle{every state}=[fill=none,text=black]
  \node[state] (E)                    {$\nts 1 | 2 | 3 \nts $};
  \node[state] (D) [left of=E]       {$\,3 | 1 2 $};
  \node[state] (C) [left of=D]       {$2 \ts |13$};
  \node[state] (B) [left of=C]       {$\nts 1|23 \nts$};
  \node[state] (A) [left of=B]       {$\, 123\,\ts $};  
 \path (A) edge [loop below]  (A)
  (B) edge [loop below]     (B)
  (C) edge [loop above]   (C)
  (D) edge [loop above]   (D)
  (E) edge [loop above]   (E)
  (B) edge [bend right=60]  node[above] {$r^{23}_{2|3}$}  (E)
  (C) edge [bend right=50]  node[above] {$r^{13}_{1|3}$}  (E)
  (D) edge [bend right=40] node[above] {$r^{12}_{1|2}$}  (E)
  (A) edge [bend left=60] node[below] {$r^{}_{1|2|3}$}   (E)
  (A) edge [bend left=50] node {$r^{}_{3 \ts |12}$} (D)
  (A) edge [bend left=40] node {$r^{}_{2 \ts |13}$} (C)
  (A) edge [bend left] node[below] {$r^{}_{1|23}$} (B) ;
\end{tikzpicture}
\caption{General Markov transition graph for discrete-time
  recombination with three sites; see text for
  details. \label{fig:d-3-gen} }
\end{figure}

Let us next look at the still fairly transparent situation with three
sites. To simplify notation, we again write partitions with vertical
lines, so $2|13$ instead of $\bigl\{ \{ 2 \} , \{1,3\} \bigr\}$ or
$1|2|3$ instead of $\bigl\{ \{1\},\{2\},\{3\}\bigr\}$ and so on. In
discrete time, the most general transition graph is shown in
Figure~\ref{fig:d-3-gen}, where the defining transition probabilities
are $a=r^{}_{1|23}$, $b=r^{}_{2|13}$, $c=r^{}_{3|12}$ and
$d=r^{}_{1|2|3}$. The remaining ones, for consistency with
\eqref{eq:M-marg}, are the marginal probabilities
\begin{align*}
  r^{12}_{1|2} \, & = \, r^{}_{1|2|3} + r^{}_{1|23} + r^{}_{2|13} \ts , \\
  r^{23}_{2|3} \, & = \, r^{}_{1|2|3} + r^{}_{2|13} + r^{}_{3|12} \ts , \\
  r^{13}_{1|3} \, & = \, r^{}_{1|2|3} + r^{}_{1|23} + r^{}_{3|12} \ts .
\end{align*}
The most general Markov matrix in this case thus reads
\begin{equation}\label{eq:M-3-sites}
   M \, = \, \begin{pmatrix} 1{-}a{-}b{-}c{-}d & a & b & c & d \\
   0 & 1{-}b{-}c{-}d & 0 & 0 & b{+}c{+}d \\
   0 & 0 & 1{-}a{-}c{-}d & 0 & a{+}c{+}d \\
   0 & 0 & 0 & 1{-}a{-}b{-}d & a{+}b{+}d \\
   0 & 0 & 0 & 0 & 1 \end{pmatrix}
\end{equation}
with $a,b,c,d \geqslant 0$ and $a+b+c+d \leqslant 1$. Due to its 
upper-triangular structure, one has $\det (M) \geqslant 0$, with
$\det (M) > 0$ if and only if $a+b+c+d < 1$. Note that this condition
automatically forces all diagonal entries of $M$ to be strictly
positive.

When $\det (M) > 0$, the spectral radius of $A = M - \one$ is
$\varrho^{}_{A} <1$, and the principal matrix logarithm of $M$ is
given by the convergent series for $\log(M)$ from
Lemma~\ref{lem:real-log-exists}, which is a real matrix.  In the
generic case that $M$ has simple spectrum, this is the \emph{only}
real logarithm of $M$ by Fact~\ref{fact:Culver}.  The series also
gives a real logarithm in the case of degenerate eigenvalues, but
there will then be others as well, since non-trivial Jordan blocks
cannot occur in this case. The series can be computed for all
non-singular $M$ from \eqref{eq:M-3-sites}, and $\log (M)$ reads
\[
   \begin{pmatrix} 
   \log (1{-}a{-}b{-}c{-}d) & \alpha & \beta & \gamma & \delta \\
   0 & \!\log (1{-}b{-}c{-}d)\! & 0 & 0 & - \log (1{-}b{-}c{-}d) \\
   0 & 0 & \!\log (1{-}a{-}c{-}d)\! & 0 & -\log (1{-}a{-}c{-}d) \\
   0 & 0 & 0 & \!\log (1{-}a{-}b{-}d)\! & -\log (1{-}a{-}b{-}d) \\
   0 & 0 & 0 & 0 & 0 \end{pmatrix}
\]
with 
\begin{equation}\label{eq:3-parameters}
\begin{split}
 \alpha \, = \, & \log \myfrac{1{-}b{-}c{-}d}{1{-}a{-}b{-}c{-}d} \; , \quad
 \beta \, = \, \log \myfrac{1{-}a{-}c{-}d}{1{-}a{-}b{-}c{-}d} \; , \quad
 \gamma \, = \, \log \myfrac{1{-}a{-}b{-}d}{1{-}a{-}b{-}c{-}d} \; , \\[2mm]
     & \quad \text{and} \quad
     \delta \, = \, \log \myfrac{(1{-}a{-}b{-}c{-}d)^2}{(1{-}b{-}c{-}d)
     (1{-}a{-}c{-}d)(1{-}a{-}b{-}d)} \ts .
\end{split}      
\end{equation}
This can easily be checked with a computer algebra program, or
explicitly by bringing $M$ to diagonal form, with the information from
Table~\ref{tab:eigen}, then using the real logarithm on the diagonal
elements, and finally transforming back to upper-triangular form.

\begin{table}
  \caption{Eigenvalues and right eigenvectors of the matrices $R$ from
    \eqref{eq:R-3-sites}. For any eigenvalue $\mu^{}_{\!\cA}$, the
    corresponding eigenvector has a $1$ in each position that belongs
    to a partition $\cB$ with $\cB
    \succcurlyeq\cA$. \label{tab:eigen}}
\begin{center}
\begin{tabular}{|c|c|c|c|c|c|}\hline
eigenvalue & $\!\begin{array}{c}\mu^{}_{123\vphantom{|}} \\ 
       -\alpha{-}\beta{-}\gamma{-}\delta \end{array}\!$ & 
   $\begin{array}{c} \mu^{}_{1|23} \\ -\beta{-}\gamma{-}\delta \end{array}$ & 
   $\begin{array}{c} \mu^{}_{2|13} \\ -\alpha{-}\gamma{-}\delta \end{array}$ & 
   $\begin{array}{c} \mu^{}_{3|12} \\ -\alpha{-}\beta{-}\delta \end{array}$ & 
   $\;\begin{array}{c} \mu^{}_{1|2|3} \\ 0 \end{array}\;$ \\  \hline
eigenvector & $\begin{pmatrix} 1 \\ 0 \\ 0 \\ 0 \\ 0 \end{pmatrix}$ &
  $\begin{pmatrix} 1 \\ 1 \\ 0 \\ 0 \\ 0 \end{pmatrix}$ &
  $\begin{pmatrix} 1 \\ 0 \\ 1 \\ 0 \\ 0 \end{pmatrix}$ &
  $\begin{pmatrix} 1 \\ 0 \\ 0 \\ 1 \\ 0 \end{pmatrix}$ &
  $\begin{pmatrix} 1 \\ 1 \\ 1 \\ 1 \\ 1 \end{pmatrix}$ \\ \hline
\end{tabular}
\end{center}
\end{table}

Note that, due to $\det (M) > 0$ together with $a,b,c \geqslant 0$,
the real numbers $\alpha, \beta, \gamma$ in \eqref{eq:3-parameters}
are automatically non-negative, while $\delta$ is well defined but can
be negative. A simple calculation shows that $R=\log (M)$ is a real
matrix of the form
\begin{equation}\label{eq:R-3-sites}
    R \, = \begin{pmatrix} * & \alpha & \beta & \gamma & \delta \\
    0 & * & 0 & 0 & \beta {+} \gamma {+} \delta \\
    0 & 0 & * & 0 & \alpha {+} \gamma {+} \delta \\
    0 & 0 & 0 & * & \alpha {+} \beta {+} \delta \\
    0 & 0 & 0 & 0 & 0 \end{pmatrix} ,
\end{equation}
where the $*$ in each row is the unique real number that enforces row
sum $0$. Note that $R$ thus satisfies the marginalisation relations of
Eq.~\eqref{eq:Q-marg}. Here, as follows from \eqref{eq:3-parameters},
the parameters $\alpha,\beta,\gamma,\delta$ are simple linear
combinations of the eigenvalues of $R$. What is more, \emph{all}
matrices of this form share the same parameter-independent set of
eigenvectors given in Table~\ref{tab:eigen}. This shows that all
matrices of the form \eqref{eq:R-3-sites} are simultaneously
diagonalisable, and hence also commute with one another.  We can now
state the following result.

\begin{prop}\label{prop:3-sites}
  Let\/ $M$ be a Markov matrix of the form~\eqref{eq:M-3-sites}, hence
  with\/ $a,b,c,d\geqslant 0$ and\/ $a+b+c+d \leqslant 1$. If\/
  $\det (M) > 0$, which happens if and only if\/ $a+b+c+d < 1$, the
  following properties are equivalent.
 \begin{enumerate}\itemsep=2pt
 \item[(C1)] $M$ is embeddable.
 \item[(C2)] $M$ is embeddable with a Markov generator of the 
                   form~\eqref{eq:R-3-sites}, which is the principal
                   matrix logarithm\/ $L$ of\/ $M$.
 \item[(C3)] $(1-a-b-c-d)^2 \geqslant (1-b-c-d)(1-a-c-d)(1-a-b-d)$.
 \end{enumerate}  
 When\/ $M$ is embeddable and has simple spectrum, the embedding is
 unique. If the spectrum is degenerate, $M$ is still
 diagonalisable. Here, an embedding is never unique, but no generator
 other than\/ $L$ can be of the form\/ \eqref{eq:R-3-sites} or
 otherwise upper triangular.
 \end{prop}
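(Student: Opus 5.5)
The plan is to reduce everything to Corollary~\ref{coro:RMS-RRS}, applied to the explicit logarithm computed above. First, note that $M$ in \eqref{eq:M-3-sites} is an RMS Markov matrix. It is upper triangular, so its eigenvalues are its diagonal entries $1-a-b-c-d$, $1-b-c-d$, $1-a-c-d$, $1-a-b-d$ and $1$. Since $a,b,c,d\geqslant 0$, all of them are positive precisely when $a+b+c+d<1$, which is the stated equivalence with $\det(M)>0$ (compare Remark~\ref{rem:non-singular}). In this case, Corollary~\ref{coro:RMS-RRS} gives (C1) $\Leftrightarrow$ (C2): $M$ is embeddable if and only if its principal logarithm $L$ is a Markov generator, and $L$ is then the unique upper-triangular real logarithm. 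By Theorem~\ref{thm:RMS-RRS}, $L$ is of RRS type, that is, of the form \eqref{eq:R-3-sites}.

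Next I use the explicit form of $L=\log(M)$ with the parameters from \eqref{eq:3-parameters}. I would verify this formula directly. All matrices of the form \eqref{eq:R-3-sites} share the parameter-independent eigenvectors of Table~\ref{tab:eigen}, so $L$ and $M$ are simultaneously diagonalised by the same upper-triangular $T$. It therefore suffices to check that $\mu^{}_{\!\cA}=\log\lambda^{}_{\!\cA}$ for each of the five eigenvalues. For example, $-\beta-\gamma-\delta$ should equal $\log(1-b-c-d)$, and $-\alpha-\beta-\gamma-\delta$ should equal $\log(1-a-b-c-d)$; both are quick computations with \eqref{eq:3-parameters}. Since $T$ also diagonalises $M$, which is of the same triangular type, this identifies $L$ as the principal logarithm.

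Then $L$ is a Markov generator if and only if its off-diagonal entries are non-negative, namely $\alpha,\beta,\gamma,\delta\geqslant 0$ together with $\beta+\gamma+\delta$, $\alpha+\gamma+\delta$, $\alpha+\beta+\delta\geqslant 0$. The entries $\alpha,\beta,\gamma$ are automatically non-negative because, for instance, $1-b-c-d\geqslant 1-a-b-c-d>0$. The combinations such as $\beta+\gamma+\delta=-\log(1-b-c-d)$ are automatically non-negative as well. So the only remaining condition is $\delta\geqslant 0$, which is exactly (C3) after exponentiating. This establishes (C2) $\Leftrightarrow$ (C3).

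For the last part, simple spectrum together with Fact~\ref{fact:Culver} gives uniqueness of the real logarithm, and hence of the embedding. In the degenerate case, $M$ is diagonalisable because it shares the eigenbasis of Table~\ref{tab:eigen}; this is the same argument as for $L$, since $M-\one$ has the shape of \eqref{eq:R-3-sites}. A degenerate eigenvalue then comes with repeated $1\times 1$ Jordan blocks. By Culver, this yields uncountably many real logarithms. These arise by conjugating $L$ with elements of the centraliser of $M$; when $L$ is a generator, I would argue that small such perturbations that keep the off-diagonal positive entries positive remain generators, or else simply state non-uniqueness of the real logarithm as the paper intends. The fact that none of the other logarithms is upper triangular follows from the final clause of Theorem~\ref{thm:RMS-RRS}.

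The main obstacle I expect is the sharp claim ``never unique'' in the degenerate case. This requires exhibiting a second logarithm that is still a Markov generator, not merely a second real logarithm. I would handle this by choosing a perturbation of $L$ within the eigenspace corresponding to the coinciding $\mu$'s, for example mixing two of the vectors $(1,1,0,0,0)^{\trans}$ and $(1,0,1,0,0)^{\trans}$, and checking that row sums and off-diagonal sign conditions are preserved.
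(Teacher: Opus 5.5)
Your treatment of the equivalences is correct and is essentially the paper's argument. It uses the explicit logarithm via the common eigenbasis of Table~\ref{tab:eigen}, reduces the generator property to $\delta\geqslant 0$, invokes Culver's theorem for uniqueness under simple spectrum, and gets diagonalisability from the shared eigenvectors. The only difference is that you obtain (C1)$\Leftrightarrow$(C2) for degenerate spectra directly from Corollary~\ref{coro:RMS-RRS}. The paper instead first treats simple spectrum via Theorem~\ref{thm:real-log-and-gen} and then invokes a continuity argument. Since your verification of $\log(M)$ never uses simplicity, your version is a tidier packaging of the same content.

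The gap is in your final paragraph. Perturbing $L$ inside a degenerate eigenspace, or conjugating $L$ by elements of the centraliser of $M$, cannot produce a new logarithm. $L$ is the primary logarithm, hence a polynomial in $M$, so it is fixed by such conjugations. Moreover, any real logarithm $X$ of $M$ with real spectrum equals $L$. Indeed, $X$ commutes with $M$, so on each eigenspace of the diagonalisable $M$ it equals $\log(\lambda)\one$ plus a nilpotent $N$ with $\ee^N=\one$, which forces $N=0$. The other real logarithms from Culver's theorem carry eigenvalue pairs $\log(\lambda)\pm 2\pi\ts\mathrm{i}\ts k$ with $k\neq 0$ on a repeated eigenvalue. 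They therefore sit at a fixed distance from $L$ and are not small perturbations of it.

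Worse, the sharp claim you are trying to prove is false. Let $X$ be a Markov generator with $\ee^X=M$. Then $\max_i\lvert X_{ii}\rvert\leqslant\lvert\mathrm{tr}(X)\rvert=-\log\det(M)$. Gershgorin places every eigenvalue $z$ of $X$ in a disc $\lvert z+q_i\rvert\leqslant q_i$ with $q_i=-X_{ii}$. Hence $\lvert\mathrm{Im}(z)\rvert<2\pi$ as soon as $\det(M)>\ee^{-2\pi}$, so $X$ has real spectrum and $X=L$. For instance, $M=\one$ (the case $a=b=c=d=0$, which has degenerate spectrum) is embeddable only via $Q=0$.

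So `never unique' can only be read as non-uniqueness of the real logarithm, which is all the paper's proof actually establishes by citing Culver. Your fallback reading is the right one: drop the perturbation plan. The statement that no other logarithm is upper triangular then comes, as you say, from the last clause of Theorem~\ref{thm:RMS-RRS}.
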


\begin{proof}
  First, let $M$ be non-singular with simple spectrum. Then, it has a
  unique real matrix logarithm by Theorem~\ref{thm:real-log-and-gen},
  which must then be $R$ from \eqref{eq:R-3-sites} as a result of the
  above calculations, and (C1) $\Leftrightarrow$ (C2) is then
  clear. In fact, $R$ is the principal matrix logarithm of $M$, so
  $R=L$, in line with our general result in
  Corollary~\ref{coro:RMS-RRS}.

  To show (C2) $\Rightarrow$ (C3), observe that the embedding implies
  $\alpha, \beta, \gamma, \delta\geqslant 0$.  Since
  $\alpha, \beta, \gamma \geqslant 0$ holds automatically, the only
  extra condition is $\delta\geqslant 0$, which is equivalent with
  (C3) by \eqref{eq:3-parameters}.

  Finally, to establish (C3) $\Rightarrow$ (C2), we note that (C3)
  implies $\delta\geqslant 0$. On the other hand, $a,b,c \geqslant 0$
  imply $\alpha, \beta, \gamma \geqslant 0$, from which the
  non-negativity of all off-diagonal entries of $R$ follows. As all
  row sums are $0$, the matrix $R$ is indeed a Markov generator.
  
  Now, again in line with our various closure arguments in and after
  the proof of Theorem~\ref{thm:RMS-RRS}, it is clear that the
  equivalence of the conditions remains true for degenerate spectra.
  The continuity arguments can be repeated here under the simpler
  setting of diagonalisable matrices, the latter being approximated by
  embeddable matrices with simple spectra.

  Uniqueness in the case of simple spectrum is a consequence of
  Fact~\ref{fact:Culver}, while non-uniqueness in the presence of
  repeated eigenvalues is also discussed in \cite{Culver}, where the
  diagonalisability of $M$ follows via the set of right eigenvectors
  from Table~\ref{tab:eigen}.  Now, assuming $\ee^R = \ee^{R'}$ with
  two matrices of the form \eqref{eq:R-3-sites} implies that $R$ and
  $R'$, which then commute, have the same eigenvalues. Thus, they 
  have the same parameters, and hence satisfy $R=R'$.
\end{proof}

There is a bit more to say on the family of matrices of the form
\eqref{eq:R-3-sites}. Clearly, one has
$R = \alpha X_1 + \beta X_2 + \gamma X_3 + \delta \ts Y$ with
\begin{align}
\allowdisplaybreaks
   \label{eq:N1}  
   X_1 \, & = \, \begin{pmatrix} -1 & 1 & 0 & 0 & 0 \\ 0 & 0 & 0 & 0 & 0 \\
     0 & 0 & -1 & 0 & 1 \\ 0 & 0 & 0 & -1 & 1 \\ 0 & 0 & 0 & 0 & 0
     \end{pmatrix}  , \quad     
   X_2 \, = \, \begin{pmatrix} -1 & 0 & 1 & 0 & 0 \\ 0 & -1 & 0 & 0 & 1 \\
     0 & 0 & 0 & 0 & 0 \\ 0 & 0 & 0 & -1 & 1 \\ 0 & 0 & 0 & 0 & 0
     \end{pmatrix}  , \\[2mm] 
   \label{eq:N2}
   X_3 \, & = \, \begin{pmatrix} -1 & 0 & 0 & 1 & 0 \\ 0 & -1 & 0 & 0 & 1 \\
     0 & 0 & -1 & 0 & 1 \\ 0 & 0 & 0 & 0 & 0 \\ 0 & 0 & 0 & 0 & 0
     \end{pmatrix}  , \quad
   Y \, = \, \begin{pmatrix} -1 & 0 & 0 & 0 & 1 \\ 0 & -1 & 0 & 0 & 1 \\
     0 & 0 & -1 & 0 & 1 \\ 0 & 0 & 0 & -1 & 1 \\ 0 & 0 & 0 & 0 & 0
     \end{pmatrix} .   
\end{align}
These four matrices mutually commute, and satisfy the relations
\[
\begin{split}
  X^{2}_{i} \, & = \, - X^{}_{i} \; , \quad Y^2 \, = \, - Y
  \quad\text{together with}\\
  X_i X_j  \, & = Y - X_i - X_j  \; ,
  \quad X_i \ts Y  \, = \, - X_i  \, .
\end{split}
\]
This shows that they span a four-dimensional Abelian matrix algebra,
which is a particularly nice and simple algebraic structure. However,
as we shall see shortly, this structure does not generalise to more
than three sites.  \medskip

Let us now look at the important special case of a single-crossover
process for three sites in discrete time. Here, each part of the
current partition can experience a split into at most two contiguous
blocks in one time step, which means that only interval partitions
emerge. In particular, $13|2$ can never be reached, and $1|2|3$ not in
a single step from $123$. For $3$ sites, this process agrees with the
multiple coupon collection process, for which the embedding problem
was solved in \cite{mccp}; the process is different for more than $3$
sites.

In our present setting, we have to consider the matrix $M$ from
\eqref{eq:M-3-sites} with $b=d=0$, which implies that we are in the
situation of Figure~\ref{fig:sc-d-3}.  However, this means that we get
\[
    \delta \, = \, \log \myfrac{(1-a-c)^2}{(1-a)(1-a-c)(1-c)} \, = \,
    \log \myfrac{1-a-c}{1-a-c+ac} \, \leqslant \, 0
\]
which can only be $0$ if $a=0$ or $c=0$, which effectively brings us
back to two sites. So, the standard non-trivial Markov matrix for
single-crossover recombination in discrete time, for $3$ sites, is
\emph{never} embeddable when $ac>0$. While single-crossover
recombination is a much-studied model, see \cite{WBB,BvW14,Martinez}
and references therein, it is not compatible with an underlying
continuous-time process of any type; see Remark~\ref{rem:g-embed} 
below.

\begin{figure}
\begin{tikzpicture}[->,>=stealth',shorten >=1pt,auto,node
    distance=3.0cm,semithick]
  \tikzstyle{every state}=[fill=none,text=black]
  
  \node[state] (D)                      {$1 | 2 | 3 $};
  \node[state] (C) [left of=D]     {$\, 3 | 1 2 \,$};
  \node[state] (B) [left of=C]     {$\,1 | 2 3\,$};
  \node[state] (A) [left of=B]      {$\ts \; 1 2 3 \; \ts$};

  \path
  (A) edge [loop above]  (A)
  (B) edge [loop above]  (B)
  (C) edge [loop above]  (C)
  (D) edge [loop above]  (D)

  (A) edge [bend right]   node {$r^{}_{1|23}$}  (B);
   \draw[->, thick] (C) to[bend right=30] node {$r^{12}_{1|2}$} (D);
   \draw[->, thick] (B) to[bend right=50] node {$r^{23}_{2|3}$} (D);
   \draw[->, thick] (A) to[bend right=50] node
         {$r^{\phantom{12}}_{12|3}$}  (C);
\end{tikzpicture}
\caption{Single-crossover recombination for three sites in discrete
  time. \label{fig:sc-d-3} }
\end{figure}

Indeed, this can also be seen independently as follows. Working with
the four states of Figure~\ref{fig:sc-d-3}, the most general generator
would be
\[
  Q \, = \, \begin{pmatrix} -\alpha{-}\beta & \alpha & \beta & 0 \\
      0 & -\beta & 0 & \beta \\  0 & 0 & - \alpha & \alpha \\
       0 & 0 & 0 & 0   \end{pmatrix} \, = \, Q_{\alpha} + Q_{\beta}
\]
with the rates $\alpha=\rho^{}_{1|23}$ and $\beta=\rho^{}_{3|12}$ in
obvious partition notation, where the row and column labels follow the
order of the nodes in Figure~\ref{fig:sc-d-3}. As $Q_{\alpha}$ and
$Q_{\beta}$ commute, we get
\[
\begin{split}  
  \exp & (Q_{\alpha} + Q_{\beta}) \, = \,
  \exp (Q_{\alpha}) \ts \exp (Q_{\beta}) \\[2mm]
  & = \, \begin{pmatrix} 
     \ee^{-\alpha-\beta} &  \ee^{-\beta}(1-\ee^{-\alpha}) &
      \ee^{-\alpha}(1-\ee^{-\beta}) & (1-\ee^{-\alpha})(1-\ee^{-\beta}) \\
    0 & \ee^{-\beta} & 0 &  1-\ee^{-\beta} \\
    0 & 0 & \ee^{-\alpha} & 1 - \ee^{-\alpha} \\
    0 & 0 & 0 & 1   \end{pmatrix},
\end{split}
\]
as follows from an elementary calculation. When interpreted in terms
of the graph from Figure~\ref{fig:d-3-gen}, we obtain
\begin{align*}
  r^{}_{1|23} \, & = \, \ee^{-\beta} (1-\ee^{-\alpha}) \ts  , \;
  r^{}_{2|13} \, = \, 0 \ts , \;
  r^{}_{3|12} \,  = \, \ee^{-\alpha} (1-\ee^{-\beta}) \ts , \\[1mm]
  r^{}_{1|2|3} \, & = \,  (1-\ee^{-\alpha})(1-\ee^{-\beta}) \\[1mm]
  r^{12}_{1|2} \, & = \, 1-\ee^{-\alpha} , \;
      r^{23}_{2|3} \, = \, 1-\ee^{-\beta} , \;
      r^{13}_{1|3} \, = \, 1-\ee^{-(\alpha+\beta)} \ts .
\end{align*}
For the full Markov semigroup, one simply replaces $\alpha$ by
$t\alpha$ and $\beta$ by $t\beta$ in the above expressions, which also
allows to look at the asymptotic behaviour for $t\to\infty$.

\begin{remark}\label{rem:g-embed}
  More generally, one can ask whether a potentially non-embeddable
  Markov matrix $M$ of the form \eqref{eq:M-3-sites} can be a product
  of embeddable ones, that is, whether
\[
     M \, = \, \ee^{Q_1} \nts \cdots  \ee^{Q_m}
\]
with recombination generators $Q_1, \ldots , Q_m$ is still possible.
Since the latter commute with one another, this would imply
$M = \exp (Q_1 + \ldots + Q_m)$, where $Q' = \sum_{i=1}^{m} Q_i$ is
again a recombination generator, because the relations in
\eqref{eq:Q-marg} are linear. Consequently, one would get
$M = \ee^{Q'}$, and hence standard embeddability.  By a result
of Johansen \cite{Joh73}, this also excludes embeddability into a
time-inhomogenous process; see also \cite[Sec.~6]{BS3}. 
\exend
\end{remark}

\begin{remark}
The incompatibility of single-crossover recombination with standard 
embeddability in a time-homogeneous process  is also clear from the
transitivity property of embeddable Markov matrices; see
\cite[Prop.~2.1]{BS1} and references given there for background.  

In fact, the simple condition on transitivity consistency leads to a
more general observation as follows.  Consider discrete-time
recombination with\/ $n$ sites, and take the lattice
$\cP'\subseteq \cP (S)$ of partitions that is generated by all
partitions $\cA$ with\/ $r^{}_{\! \cA} > 0$. Then, if $r^{}_{\cB}=0$
for any $\cB\in\cP'$, the process cannot be embeddable.  \exend
\end{remark}

Unfortunately, Remark~\ref{rem:g-embed} does no longer apply when the
algebra generated by the RRS matrices is non-commutative. Let us next
analyse the simplest case where this happens.

\section{Recombination for four sites}\label{sec:4}

To understand why $2$ and $3$ sites are special, and why their
treatment does not give the right idea of the general structure, we
need to analyse $4$ sites in some detail. This is the smallest number
of sites where a non-linear parameter dependence of (some) elements
and eigenvalues of $M$ emerges, which will lead to a rather different
algebraic structure. Here, we have $S = \{1,2,3,4 \}$, hence
$\dm =B^{}_{4} = 15$, and thus $14$ free parameters, as coded by the
probability vector $r = (r^{}_{\!\cA})^{}_{\cA\in\cP(S)}$. The $15$
eigenvalues of $M$ are the $\lambda^{}_{\cA}$ with $\cA\in\cP (S)$,
with $\lambda^{}_{1234} = r^{}_{1234}$ and $\lambda^{}_{1|2|3|4} = 1$,
while the remaining ones, using Eqs.~\eqref{eq:marg-prob} and
\eqref{eq:eigen-M}, are given by
\[
\begin{split}
   \lambda^{}_{i | jk\ell} \, & = \, r^{\, j k\ell}_{ \{ j k \ell \} }
       \, = \, r^{}_{1234 \vphantom{|}} + r^{}_{i | j k \ell} \ts , \\[1mm]
   \lambda^{}_{i | j | k \ell } \, & = \, r^{\, k \ell}_{ \{ k \ell \} } \, = \,
        r^{}_{1234 \vphantom{|}} +  r^{}_{i | j k \ell} + r^{}_{j | i k \ell}
        + r^{}_{ij | k \ell } + r^{}_{i | j | k \ell } \ts , \\[1mm]
  \lambda^{}_{ij | k \ell} \, & = \,  r^{\, i j }_{ \{ i j \} }
             r^{\, k \ell}_{ \{ k \ell \} }  \, = \,
  \lambda^{}_{k | \ell | i j} \cdot \lambda^{}_{i | j | k \ell } \ts ,
\end{split}
\]
where the first, second and third line account for $4$, $6$ and $3$
eigenvalues, respectively. In all cases, the four indices represent a
permutation of the elements of $S$, which is a widely used shorthand
in this setting. The mentioned non-linearity is clear from the third
line.

\subsection{Generator structure}
Let us look at a Markov generator, whose first row is arranged as
indicated in Table~\ref{tab:4-sites}, with $\rho (1234)$ again
following from the condition that the row sum is $0$. When
$M = \ee^Q$, via the diagonal elements, we get the following relations
for the eigenvalues,
\begin{equation}\label{eq:4-eigen}
\begin{split}
  \mu^{}_{1234} \, & = \, \log (\lambda^{}_{1234} ) \, = \,
    - \! \sum_{\pmax^{\nts S} \ne \cA \in \cP(S)} \! \rho (\cA ) 
     \, = \, \rho (\pmax) \ts ,  \\[1mm]
  \mu^{}_{i | jk\ell} \, & = \, \log (\lambda^{}_{i | jk\ell} ) \, = \,
    - \! \sum_{\pmax^{\nts jk\ell} \ne \cA \in \cP (\{j,k,\ell\})} 
            \! \rho^{\ts jk\ell}_{\cA} , \\[1mm]
  \mu^{}_{ij | k\ell} \, & = \, \log (\lambda^{}_{ij | k\ell}) \, = \,
    - \bigl( \rho^{\ts ij}_{i | j} + \rho^{\ts k\ell}_{k | \ell} \bigr) , \\[1mm]
  \mu^{}_{i | j | k\ell} \, & = \, \log (\lambda^{}_{i | j | k\ell} ) \, = \, 
    - \rho^{\ts k\ell}_{k | \ell} \ts , \\[1mm]      
   \mu^{}_{1|2|3|4} \, & = \, \log (\lambda^{}_{1|2|3|4} ) \, = \, 0 \ts ,  
\end{split}
\end{equation}
with the same shorthand as above. The eigenvalue expressions also
reveal that we have three non-trivial linear relations among the
eigenvalues of $Q$, namely
\[
    \mu^{}_{ij | k\ell} \, = \mu^{}_{i | j | k\ell} + \mu^{}_{k | \ell | ij} \ts ,
\]
which is the reason why we cannot determine the $14$ parameters from
the eigenvalues of $Q$ by solving a system of \emph{linear} equations.
This problem did not occur in Section~\ref{sec:2-3}.

Another difference emerges as follows. In the generic case of simple
spectrum, the generator $Q$ is diagonalisable. When degeneracies
occur, this can still be true, but it need not, so non-trivial Jordan
blocks are possible, as is known from the analysis of the special case
of interval partitions \cite{interval}. To be more specific, let
$\cA^{\trans}$ denote the unit column vector with a $1$ in position
$\cA$ and $0$ everywhere else. Then, for any eigenvalue
$\mu^{}_{\! \cA}$ where $\cA\in\cP(S)$ has at most one non-singleton
part, the corresponding right eigenvector $v^{\trans}_{\!\cA}$ is
parameter independent and reads
\[
    v^{\trans}_{\!\cA} \, = \sum_{\cB \succcurlyeq \cA} \cB^{\trans} ,
\]
which is the structure we saw in Table~\ref{tab:eigen} for $3$ sites
as well.  For the remaining partitions,
$\cA \in \{ 12|34, 13|24, 14|23 \}$, we find
\begin{align*}
   v^{\trans}_{12|34} \, & = \, (12|34)^{\trans} -
              \myfrac{b^{}_{1}}{-b^{}_{1} + b^{}_{2} + b^{}_{3} 
              + c^{}_{2} + c^{}_{3} + c^{}_{4} + c^{}_{5} + d^{}_{1}} 
              (1234)^{\trans}  , \\
   v^{\trans}_{13|24} \, & = \, (13|24)^{\trans} -
              \myfrac{b^{}_{2}}{\, b^{}_{1} - b^{}_{2} + b^{}_{3} 
              + c^{}_{1} + c^{}_{3} + c^{}_{4} + c^{}_{6} + d^{}_{1}} 
              (1234)^{\trans}  , \\
   v^{\trans}_{14|23} \, & = \, (14|23)^{\trans} - 
              \myfrac{b^{}_{3}}{\, b^{}_{1} + b^{}_{2} - b^{}_{3} 
              + c^{}_{1} + c^{}_{2} + c^{}_{5} + c^{}_{6} + d^{}_{1}} 
              (1234)^{\trans} ,
\end{align*}
provided the denominators are non-zero. In this case, the matrix $Q$
is still diagonalisable. In the excluded cases, we get a non-trivial
Jordan block, as in \cite{interval}.

\begin{table}
\caption{Transitions from $\pmax$ to $\cA\ne\pmax$   \label{tab:4-sites} }
\begin{center}
\begin{tabular}{|c|c||c|c|} \hline
$\cA$ & $\rho^{}_{\! \cA}$ & $\cA$ & $\rho^{}_{\! \cA}$ \\ \hline
$1|234$ & $a^{}_{1}$ & $1|2|34$ & $c^{}_{1}$ \\
$2|134$ & $a^{}_{2}$ & $1|3|24$ & $c^{}_{2}$ \\
$3|124$ & $a^{}_{3}$ & $1|4|23$ & $c^{}_{3}$ \\
$4|123$ & $a^{}_{4}$ & $2|3|14$ & $c^{}_{4}$ \\
$12|34$ & $b^{}_{1}$ & $2|4|13$ & $c^{}_{5}$ \\
$13|24$ & $b^{}_{2}$ & $3|4|12$ & $c^{}_{6}$ \\
$14|23$ & $b^{}_{3}$ & $1|2|3|4$ & $d^{}_{1}$ \\ \hline
\end{tabular}
\end{center}
\end{table}

The remaining potentially non-zero transition rates follow from 
marginalisation, and are
\[
  i | jk\ell \rightarrow i | j | k\ell :
     \rho^{\ts jkl}_{j | k\ell} \ts , \quad
  i | jk\ell \rightarrow i | j | k | \ell :
     \rho^{\ts jk\ell}_{j | k | \ell} \ts , \quad
  ij | k\ell \rightarrow i | j | k\ell : \rho^{\ts ij}_{i | j}  \ts ,
\]
while all other transition rates vanish, which applies in particular
to $12|34\rightarrow 1|2|3|4$, because never more than one part can be
refined in one step.

The marginal rates in terms of the parameters from
Table~\ref{tab:4-sites} read
\begin{align*}
    \rho^{\ts 234}_{2|34} \nts & = a^{}_{2} + b^{}_{1} + c^{}_{1} \ts , & 
    \rho^{\ts 134}_{1|34} \nts & = b^{}_{1} + c^{}_{1} + a^{}_{1} \ts , &
    \rho^{\ts 234}_{3|24} \nts & = a^{}_{3} + b^{}_{2} + c^{}_{2} \ts , &  
    \rho^{\ts 134}_{3|14} \nts & = b^{}_{3} + c^{}_{4} + a^{}_{3} \ts , \\
    \rho^{\ts 234}_{4|23} \nts & = a^{}_{4} + b^{}_{3} + c^{}_{3} \ts ,& 
    \rho^{\ts 134}_{4|13} \nts &= b^{}_{2} + c^{}_{5} + a^{}_{4} \ts , &
    \rho^{\ts 124}_{1|24} \nts & = a^{}_{1} + b^{}_{2} + c^{}_{2} \ts ,&
    \rho^{\ts 123}_{1|23} \nts & = b^{}_{3} + c^{}_{3} + a^{}_{1} \ts , \\
    \rho^{\ts 124}_{2|14} \nts & = a^{}_{2} + b^{}_{3} + c^{}_{4} \ts, & 
    \rho^{\ts 123}_{2|13} \nts & = b^{}_{2} + c^{}_{5} + a^{}_{2} \ts ,&
    \rho^{\ts 124}_{4|12} \nts & = a^{}_{4} + b^{}_{1} + c^{}_{6} \ts ,& 
    \rho^{\ts 123}_{3|12} \nts & = b^{}_{1} + c^{}_{6} + a^{}_{3} \ts ,
\end{align*}
together with
\begin{align*}             
  \rho^{\ts 234}_{2|3|4} \nts & = c^{}_{4} + c^{}_{5} + c^{}_{6} + d^{}_{1} \ts , & 
  \rho^{\ts 134}_{1|3|4} \nts & = c^{}_{2} + c^{}_{3} + c^{}_{6} + d^{}_{1} \ts , \\
  \rho^{\ts 124}_{1|2|4} \nts & = c^{}_{1} + c^{}_{3} + c^{}_{5} + d^{}_{1} \ts , & 
  \rho^{\ts 123}_{1|2|3} \nts & = c^{}_{1} + c^{}_{2} + c^{}_{4} + d^{}_{1} \ts ,
\end{align*}
and
\begin{align*}
 \rho^{\ts 12}_{1|2} \nts & = a^{}_{1} + a^{}_{2} + b^{}_{2} + b^{}_{3}
     + c^{}_{1} + c^{}_{2} + c^{}_{3} + c^{}_{4} + c^{}_{5} + d^{}_{1} \ts , \\
 \rho^{\ts 13}_{1|3} \nts & =  a^{}_{1} + a^{}_{3} + b^{}_{1} + b^{}_{3} 
     + c^{}_{1} + c^{}_{2} + c^{}_{3} + c^{}_{4} + c^{}_{6} + d^{}_{1} \ts , \\     
 \rho^{\ts 14}_{1|4} \nts & =  a^{}_{1} + a^{}_{4} + b^{}_{1} + b^{}_{2}  
     + c^{}_{1} + c^{}_{2} + c^{}_{3} + c^{}_{5} + c^{}_{6} + d^{}_{1} \ts , \\  
 \rho^{\ts 23}_{2|3} \nts & =  a^{}_{2} + a^{}_{3} + b^{}_{1}  + b^{}_{2} 
     + c^{}_{1} + c^{}_{2} + c^{}_{4} + c^{}_{5} + c^{}_{6} + d^{}_{1} \ts , \\  
 \rho^{\ts 24}_{2|4} \nts & =  a^{}_{2} + a^{}_{4} + b^{}_{1} + b^{}_{3} 
     + c^{}_{1} + c^{}_{3} + c^{}_{4} + c^{}_{5} + c^{}_{6} + d^{}_{1} \ts , \\  
 \rho^{\ts 34}_{3|4} \nts & =  a^{}_{3} + a^{}_{4} + b^{}_{2} + b^{}_{3}
     + c^{}_{2} + c^{}_{3} + c^{}_{4} + c^{}_{5} + c^{}_{6} + d^{}_{1} \ts .
\end{align*}
All the other ones vanish.

Now, in analogy to Eqs.~\eqref{eq:N1} and \eqref{eq:N2}, let
$X^{}_{\!  \cA}$ be the generator that is obtained by setting
$\rho^{}_{\! \cA} = 1$ and all other parameters to $0$, giving us $14$
generators.  Inspecting $X^{}_{12 | 34}$, one realises that its square
cannot be written as a linear combination of the $X^{}_{\! \cA}$, thus
showing that our $14$ generators do \emph{not} span a matrix
algebra. Let us thus look at the \emph{commutator} (or Lie bracket) in
our matrix setting,
\[
     [ X, Y ] \, \defeq \, X \ts Y - Y X , 
\]
which defines a bilinear product. If we know the outcome for the $196$
commutators of our $14$ generators, we then get the extension to the
real span of them via
\begin{equation}\label{eq:lin-extend}
    \Bigl[  \sum_{\cA\in\cP(S)} \! \rho^{}_{\!\cA} X^{}_{\!\cA} ,
      \sum_{\cB\in\cP (S)} \! \eta^{}_{\cB} X^{}_{\cB} \Bigr] \, = \!
      \sum_{\cA, \cB \in \cP (S)}\!  \rho^{}_{\!\cA} \ts \eta^{}_{\cB}
      \, [ X^{}_{\!\cA} , X^{}_{\cB} ] \ts .
\end{equation}
Clearly, $14$ commutators are trivial, due to $[X,X]=0$.  This leaves
us with $91$ to determine, since $[X,Y]=-[Y,X]$. Of these, precisely
$18$ are non-zero, namely
\begin{equation}\label{eq:Lie-4}
\begin{split}
  [ X^{}_{1|2|3|4} , X^{}_{ij | k\ell} ] \, & = \,  X^{}_{ij | k\ell} 
    - X^{}_{i | j | k\ell} - X^{}_{k | \ell | ij} + X^{}_{1|2|3|4}  \ts , \\[1mm]
  [ X^{}_{ij | k\ell} , X^{}_{ik | j\ell} ] \, & = \,
    X^{}_{ik | j\ell} - X^{}_{i | k | j\ell} - X^{}_{j | \ell | ik} 
    - X^{}_{ij | k\ell} + X^{}_{i | j | k\ell} + X^{}_{k | \ell | ij} \ts , \\[1mm]
  [ X^{}_{i | j | k\ell} , X^{}_{ik | j\ell} ] \, & = \, X^{}_{ik | j\ell} 
    - X^{}_{i | k | j\ell} - X^{}_{j | \ell | ik} + X^{}_{1|2|3|4} \ts ,          
\end{split}    
\end{equation}
where the first two formulas account for $3$ relations each, and the
last for $12$, so $18$ in total. Each of these non-trivial commutators
is a linear combination in the $X^{}_{\!\cA}$. Since the $14$
generators are linearly independent over $\RR$, as follows from the
structure of their first row, we have shown the following result.

\begin{lemma}\label{lem:Lie-4}
  The\/ $14$ matrices\/ $X^{}_{\!\cA}$ with\/
  $\pmax \ne \cA \in \cP (S)$ span a real matrix Lie algebra of
  dimension\/ $14$, with the non-trivial commutators being given by
  \eqref{eq:Lie-4}.  \qed
\end{lemma}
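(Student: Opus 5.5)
The plan is to establish three things: the $14$ matrices $X^{}_{\!\cA}$ are linearly independent, their real span $\mathfrak{g}$ is closed under the commutator, and the only non-vanishing brackets among basis elements are those in \eqref{eq:Lie-4}. Bilinearity and antisymmetry then reduce closure of $\mathfrak{g}$ to the basis brackets via \eqref{eq:lin-extend}, so it suffices to handle the $91$ unordered pairs $\{\cA,\cB\}$ with $\cA\ne\cB$.

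Linear independence is immediate from the first rows. By construction, the first row of $X^{}_{\!\cA}$ has entry $1$ at position $\cA$, entry $-1$ at $\pmax$, and zeros elsewhere. A vanishing linear combination $\sum_{\cA}\rho^{}_{\!\cA} X^{}_{\!\cA}=0$ therefore forces every $\rho^{}_{\!\cA}=0$, so $\dim\mathfrak{g}=14$.

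For closure I would use an intrinsic criterion rather than guessing coefficients. An upper-triangular, zero-row-sum matrix $Z$ lies in $\mathfrak{g}$ if and only if all its rows are determined from its first row by \eqref{eq:Q-marg} and \eqref{eq:marg-rates}. In particular, $Z$ must vanish at positions $(\cA,\cB)$ with $\cB\prec\cA$ but $\cB\not\klein\cA$. Given such a $Z$, its expansion in the basis is read off from its first row: the coefficient of $X^{}_{\!\cA}$ is $Z^{}_{\pmax\cA}$.

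To organise the $91$ pairs, I would exploit that most generators have a simple structure. If $\cA$ has at most one non-singleton part, the right eigenvectors $v^{\trans}_{\!\cB}$ of $X^{}_{\!\cA}$ for all such $\cB$ are parameter independent, as noted above. More concretely, each such $X^{}_{\!\cA}$ acts on every row by a single marginal splitting. I would check directly that any two generators whose partitions both have at most one non-singleton part commute. This mirrors the three-site computation, where all generators commute, and can be seen by restricting to the sublattices below each row. That disposes of most pairs.

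Among the remaining pairs, those involving an index $\cA$ that is a singleton-with-triple partition, such as $1|234$, should also commute. This leaves the candidates $X^{}_{1|2|3|4}$ or $X^{}_{i|j|k\ell}$ paired with $X^{}_{ij|k\ell}$-type generators, and pairs of two distinct $X^{}_{ij|k\ell}$. For these, compute the brackets explicitly using the transition structure listed above. For instance, $X^{}_{ij|k\ell}$ feeds $ij|k\ell\to i|j|k\ell$ via $\rho^{ij}_{i|j}$, while $X^{}_{1|2|3|4}$ has no direct entry $12|34\to1|2|3|4$. It is exactly this failure of $(12|34)\to(1|2|3|4)$ that produces the non-zero commutators.

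The main obstacle is verifying in each non-zero case that the bracket again satisfies the marginalisation structure, notably the vanishing of its $(ij|k\ell,\,1|2|3|4)$ entry, after which the first row gives \eqref{eq:Lie-4}. I would carry this out for one representative of each of the three types and transfer to the rest by the action of the symmetric group $S_4$ permuting sites. That action permutes the $X^{}_{\!\cA}$ compatibly with conjugation by permutation matrices, so the $3+3+12$ relations follow from the representatives. A computer-algebra check of all $91$ brackets would serve as confirmation.
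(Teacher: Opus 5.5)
Your plan is correct and is essentially the paper's argument. The paper also gets linear independence from the first rows and then evaluates the $91$ brackets explicitly, finding that exactly the $18$ in \eqref{eq:Lie-4} are non-zero; your $S_4$-symmetry reduction and the observation that only brackets involving some $X_{ij|k\ell}$ can be non-zero just organise that computation efficiently. One small slip in your heuristic: $X_{ij|k\ell}$ itself has a vanishing row $ij|k\ell$, since its marginal $\rho^{ij}_{i|j}$ is $0$, so the transition $ij|k\ell\to i|j|k\ell$ is fed by generators such as $X_{i|j|k\ell}$ or $X_{ik|j\ell}$, not by $X_{ij|k\ell}$.
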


This is the new algebraic structures announced earlier, which starts
at $4$ sites and will be analysed in more generality in
Section~\ref{sec:general}.

\subsection{Embedding}
To approach the embedding question, we start from a non-singular
Markov matrix $M$ with simple spectrum, which has a unique real
logarithm by Lemma~\ref{lem:real-log-exists}, say $Q$ with $M=\ee^Q$,
where we now need to assess when $Q$ really is a Markov generator. As
it must have the marginalisation structure from \eqref{eq:Q-marg}, we
need to compute its first row only, and check when we get
$Q^{}_{\pmax \cA} = \rho^{}_{\! \cA} \geqslant 0$ for all
$\pmax \ne \cA \in \cP (S)$. With the parameters
$r^{}_{\!\cA}\geqslant 0$ from $M$ and the eigenvalue relations from
\eqref{eq:4-eigen}, one finds
\[
    Q^{}_{\pmax \pmax} \, = \, \rho^{}_{1234} \, = 
       \log (\lambda^{}_{1234}) \, \leqslant \, 0 \ts ,
\]
which ensures that the row sum is $0$, together with the following
identities.  First, one has
\[
  \rho^{}_{1 | 234} \, = \, \log \myfrac{\lambda_{1 | 234}}{\lambda^{}_{1234}}
  \, = \, \log \Bigl( 1 + \myfrac{r^{}_{1 | 234}}{r^{}_{1234}} \Bigr)
  \, \geqslant \, 0 \ts , 
\]
and analogously for the other three partitions of this kind. Here,
non-negativity of the $4$ parameters $\rho^{}_{i | jk\ell}$ is
automatic. Next, one obtains 
\[
  \rho^{}_{ij | k\ell} \, = \, r^{}_{ij | k\ell} \,
  \myfrac{\log (\lambda^{}_{1234}) - \log (\lambda_{ij | k\ell})}
    {\lambda^{}_{1234} - \lambda^{}_{ij | k\ell}} \, \geqslant \, 0 \ts ,
\]
which accounts for $3$ parameters, where the fraction is indeed always
non-negative. This condition follows form an explicit computation,
which we skip here because we present a general approach in
Section~\ref{sec:gen-emb-cond}.

Now, we come to the parameters that need not always be non-negative.
Here, in a similar fashion, we get $6$ relations, namely 
\begin{equation}\label{eq:4-c1}
  \rho^{}_{i | j | k  \ell} \,  = \, \rho^{\ts ik \ell}_{i | k \ell }
        - \rho^{}_{i | jk\ell} - \rho^{}_{ij | k\ell}  
     = \, \log \myfrac{\lambda_{i | j | k \ell} \, \lambda^{}_{1234} }
             {\lambda_{j | ik \ell} \, \lambda_{i | jk\ell} }  
            -  r^{}_{ij | k\ell} \, 
            \myfrac{\mu^{}_{1234} - \mu_{ij | k\ell}}
                 {\lambda^{}_{1234} - \lambda_{ij | k\ell}} \ts .
\end{equation}
By another computation of the same kind, we arrive at the final
identity,
\begin{equation}\label{eq:4-c2}
\begin{split}
  \rho^{}_{1|2|3|4} \, & = \, - 3 \ts \mu^{}_{1234} - 2 ( \mu_{1|234}
        + \mu_{2|134} + \mu_{3|124} + \mu_{4|123}) \\[1mm]
  & \quad \; - (\mu_{1|2|34} + \mu_{1|3|24} + \mu_{1|4|23} 
             + \mu_{2|3|14} + \mu_{2|4|13} + \mu_{3|4|12}) \\[1mm]
  & \quad \; - r^{}_{12|34} \myfrac{\mu^{}_{1234} - \mu_{12|34}}
             {\lambda^{}_{1234}-\lambda_{12|34}}    
             - r^{}_{13|24} \myfrac{\mu^{}_{1234} - \mu_{13|24}}
             {\lambda^{}_{1234}-\lambda_{13|24}}   
             - r^{}_{14|23} \myfrac{\mu^{}_{1234} - \mu_{14|23}}
             {\lambda^{}_{1234}-\lambda_{14|23}}  \ts .                  
\end{split}
\end{equation}
Here, lines one and two of the right-hand side together are always
strictly positive, while the third line is non-positive, so we get
a real condition from this equation. Later, we shall derive a
systematic method to compute these conditions, which is based on a
triangular recursion.

To also cover the case of degenerate spectra, we first recall from
\cite[Prop.~3]{King} that the set of embeddable Markov matrices is
relatively closed within the set of all Markov matrices with positive
determinant, which means that we can use a suitable continuity
argument with our conditions. The only obstacle for this are
degeneracies between eigenvalues that occur as a difference in a
denominator. For this, observe that the $2$-variable function defined
by $(x,y) \mapsto \frac{\log(x) - \log(y)}{x-y}$ for positive $x,y$
with $x \ne y$ has the unique continuous extension to $\frac{1}{x}$
for $x=y>0$ by de l'Hospital's rule; see the closely related case
in Lemma~8 of \cite[Appendix]{BBS}. This means that, for Markov
matrices with degenerate positive spectrum, we have to extend the
conditions $\rho^{}_{ij | k | \ell} \geqslant 0$ and
$\rho^{}_{1|2|3|4} \geqslant 0$ in this way, thus getting the correct
criterion for embeddability in general. We thus have the following
result.

\begin{theorem}
  A non-singular Markov matrix\/ $M$ with simple spectrum for 
  recombination with\/ $4$ sites is embeddable if and only if 
  its unique real matrix logarithm satisfies\/
  $\rho^{}_{ij | k | \ell} \geqslant 0$ and\/
  $\rho^{}_{1|2|3|4} \geqslant 0$, which are\/ $7$ conditions with the
  parameters from \eqref{eq:4-c1} and \eqref{eq:4-c2}.

  Further, when\/ $M$ is non-singular but has degeneracies in its
  spectrum, its principal matrix logarithm is still an RRS matrix and
  thus upper triangular.  The latter is a rate matrix if and only if
  the conditions from \eqref{eq:4-c1} and \eqref{eq:4-c2} are
  satisfied, where fractions of the form\/
  $\frac{\log(x) - \log(y)}{x-y}$ have to be replaced by\/
  $\frac{1}{x}$ whenever\/ $x=y$.  \qed
\end{theorem}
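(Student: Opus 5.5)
The plan is to reduce everything to Corollary~\ref{coro:RMS-RRS} and Theorem~\ref{thm:RMS-RRS}, and then to read off the sign of each of the $14$ first-row entries of the principal logarithm $L=\log(M)$. Assume first that $M$ is non-singular with simple spectrum. By Remark~\ref{rem:non-singular} the spectrum is positive. Theorem~\ref{thm:RMS-RRS} then says that $L$ is the unique real logarithm of $M$ and that it is of RRS type. Hence $M$ is embeddable if and only if $L$ is a Markov generator. Since $L$ is RRS, all its entries follow linearly from its first row $\rho$ via \eqref{eq:Q-marg} and \eqref{eq:marg-rates}, with non-negative coefficients. So $L$ is a generator precisely when $\rho(\cA)\geqslant 0$ for all $\cA\ne\pmax$.

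The key step is to compute $\rho$ explicitly from $M=\ee^L$. I would do this by a triangular recursion on the lattice, not through the Vandermonde system of Theorem~\ref{thm:real-log-and-gen}. The relation $ML=LM$, read off at entry $(\pmax,\cB)$, gives
\[
\sum_{\cB\preccurlyeq\udo{\cC}\preccurlyeq\pmax} r(\cC)\ts L^{}_{\cC\cB}
\, = \sum_{\cB\preccurlyeq\udo{\cC}\preccurlyeq\pmax}\rho(\cC)\ts M^{}_{\cC\cB} .
\]
Solving for $\rho(\cB)$ requires dividing by $\lambda_{\pmax}-\lambda_{\cB}$, which is nonzero by simplicity. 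It also uses entries of $L$ in lower rows, and these are marginal rates of coarser type, that is, rates of subsystems with at most $3$ sites. For those, Section~\ref{sec:2-3} already gives closed forms. Carrying this out:
\begin{itemize}
\item for $\cB=i|jk\ell$, one gets $\log(\lambda_{i|jk\ell}/\lambda_{1234})\geqslant 0$;
\item for $\cB=ij|k\ell$, the only lower row feeding in is trivial, and one gets $r_{ij|k\ell}$ times a divided difference of $\log$, which is $\geqslant 0$;
\item for the $6$ partitions $i|j|k\ell$, one uses $\rho^{ik\ell}_{i|k\ell}$ from the $3$-site formula \eqref{eq:3-parameters} applied to the marginal $r^{ik\ell}$, and subtracts the already known $\rho_{i|jk\ell}$ and $\rho_{ij|k\ell}$, giving \eqref{eq:4-c1};
\item finally, $\rho_{1|2|3|4}$ follows from the zero row sum together with the marginal relation for $\rho^{ijk}_{i|j|k}$, giving \eqref{eq:4-c2}.
\end{itemize}
The first $8$ parameters are automatically non-negative, so only the $7$ stated conditions remain. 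This establishes the first part of the theorem.

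For degenerate spectrum, Theorem~\ref{thm:RMS-RRS} still gives that the principal logarithm $L$ is RRS and upper triangular. By Corollary~\ref{coro:RMS-RRS}, embeddability is equivalent to $L$ being a generator. I would approximate $r$ by probability vectors $r_m$ with simple spectrum. Then $L_m\to L$, since $\log$ is continuous on matrices with positive spectrum. The formulas \eqref{eq:4-c1} and \eqref{eq:4-c2} for $L_m$ converge entrywise, and the only delicate terms are $\frac{\log x-\log y}{x-y}$, which extend continuously to $\frac{1}{x}$ on the diagonal. So the first row of $L$ is given by the same formulas with this replacement. The closedness result \cite[Prop.~3]{King} is not even needed, because the criterion is simply the sign of these limits.

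The main obstacle will be the bookkeeping in the recursion for $\rho_{i|j|k\ell}$ and $\rho_{1|2|3|4}$. The issue is the nonlinear eigenvalues $\lambda_{ij|k\ell}=\lambda_{i|j|k\ell}\lambda_{k|\ell|ij}$. Because of these, the contributions from rows $ij|k\ell$ do not telescope, and they leave exactly the divided-difference correction terms. I would first verify the $\cB=ij|k\ell$ entry carefully, since it produces the correction, and then propagate it.
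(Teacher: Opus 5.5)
Your proposal is correct and follows the paper's route. Theorem~\ref{thm:RMS-RRS} reduces everything to the signs of the $14$ off-diagonal first-row entries of the principal logarithm $L$. The entries at $i|jk\ell$ and $ij|k\ell$ are automatically non-negative, the six $\rho_{i|j|k\ell}$ come from marginalisation against the $3$-site logarithm \eqref{eq:3-parameters}, and $\rho_{1|2|3|4}$ comes from the row sum.

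There are three points where you go beyond the paper or need to say more.
\begin{itemize}
\item Your use of $ML=LM$ at row $\pmax$ (Parlett's recurrence) supplies the computation of $\rho_{ij|k\ell}$ that the paper skips. It is a one-liner because $i|jk\ell$ and $ij|k\ell$ are coatoms, so no intermediate partitions occur and no lower row feeds in at all.
\item For the lower-row entries of $L$ you should say why they are first-row entries of $\log(M|_C)$. Principal submatrices on down-sets $\{\cC\preccurlyeq\cA\}$ are multiplicative for matrices of this triangular type. Hence, via the convergent log series, $\log(M)$ restricted to such a down-set is the logarithm of the restricted $M$. Taking $\cA=\{C\}\sqcup$ singletons identifies this restricted matrix with the marginal system on $C$.
\item For degenerate spectra, you replace the paper's appeal to King's relative closedness with continuity of the principal logarithm on matrices with positive spectrum. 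This proves the second claim directly: the first row of $L$ equals the continuously extended formulas. You are right that closedness is not needed there.
\end{itemize}

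One caution: your last bullet will not reproduce \eqref{eq:4-c2} as printed. Sum \eqref{eq:4-c1} over the six partitions $i|j|k\ell$; each $\mu_{m|\ldots}$ then appears three times and each correction term twice. Using the zero row sum, this gives
\[
  \rho_{1|2|3|4} \, = \, -3\mu_{1234}
  + 2\bigl(\mu_{1|234}+\mu_{2|134}+\mu_{3|124}+\mu_{4|123}\bigr)
  - \bigl(\mu_{1|2|34}+\cdots+\mu_{3|4|12}\bigr)
  + \sum_{ij|k\ell} r_{ij|k\ell}\,
  \frac{\mu_{1234}-\mu_{ij|k\ell}}{\lambda_{1234}-\lambda_{ij|k\ell}} \, .
\]
So the printed version has the wrong sign on the $\mu_{i|jk\ell}$-terms and on the third line.

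A quick check is $M=\ee^{tX_{1|2|3|4}}$ with $t>0$. Its principal logarithm is $tX_{1|2|3|4}$, so $\rho_{1|2|3|4}=t$. Here all $\mu_{1234},\mu_{i|jk\ell},\mu_{i|j|k\ell}$ equal $-t$ and all $r_{ij|k\ell}=0$. The expression above gives $3t-8t+6t=t$, while \eqref{eq:4-c2} gives $17t$. The spectrum in this example is degenerate, but by continuity the discrepancy persists at nearby $M$ with simple spectrum.

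Accordingly, the last line equals $\sum\rho_{ij|k\ell}\geqslant 0$, and the first two lines can have either sign; for instance they equal $-t$ for $M=\ee^{tX_{12|34}}$. The theorem holds with this corrected expression in place of \eqref{eq:4-c2}.
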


It is now time to move on to the general case, where we will
employ and profit from some more algebraic tools, in particular
Lie-theoretic ones.

\section{The general case}\label{sec:general}

Let now $S = \{ 1, 2, \ldots , n\}$ with $n\in\NN$ be arbitrary, but
fixed. There are $\bell_n - 1$ elementary recombination generators,
denoted by $X^{}_{\cB}$ with $\pmax \ne \cB \in \cP (S)$. For our
Markov matrices, we use a basis of $\RR^d$ that is labelled by the
partitions $\cA \in \cP (S)$. Therefore, we now adopt the slight (but
common) abuse of notation to identify the partition labels also with
the corresponding unit row vectors. Then, $\cA \ts X_{\cB}$ is a
well-defined row vector again, which is a sum that balances the
`input' and `output' for all one-part refinements of $\cA$ by $\cB$,
thus giving
\[
     \cA \ts X_{\cB} \, = \sum_{A\in \cA}
     \bigl( (\cA \setminus A ) \sqcup \cB|_{A} - \cA \bigr)
     \, = \sum_{A\in\cA} \bigl( (\cA\setminus A) \sqcup
          (\cB|_{A} - \{ A \} ) \bigr) .   
\]
With this, we would also get $\cA \ts X_{\pmax} = 0$ for all $\cA$,
hence $X_{\pmax} = 0$, which is the reason why we only need to
consider $\cB \ne \pmax$ for the generating operators.

When $n\geqslant 4$, the $X_{\cB}$ can never span a matrix algebra, as
we saw in the previous section, the crucial observation being that the
emergence of partitions with more than one non-singleton part implies
the square of the corresponding generator to have non-zero elements
in some wrong places. However, they can still span a Lie algebra over
$\RR$, and this is what we are now going to establish, where we first
need a better way to express $\cA \ts X_{\cB}$.

\subsection{Lie algebra structure}

Consider the formal sum of partitions defined by
\begin{equation}\label{eq:arrow}
      \cA\al\cB \, \defeq \sum_{A\in\cA}  \bigl( (\cA\setminus A) 
               \sqcup \cB |_A  - \cA \bigr) ,
\end{equation}
which clearly satisfies $\cA\al\cA = 0$. Also, for the formal sum
$\cC = \sum_{\pmax \ne \cA \in \cP (S)} n^{}_{\nts\cA} \ts \cA$, we set
\begin{equation}\label{eq:formal-sum}
    X^{}_{\cC} \, \defeq \sum_{\pmax \ne \cA \in \cP (S)} 
    n^{}_{\nts\cA} \ts X^{}_{\!\cA} \ts ,
\end{equation}
which in particular includes
$X^{}_{\! \cA\al \cB} = \sum_{A\in\cA} \bigl( X^{}_{(\cA\setminus
  A)\sqcup \ts \cB |_A} - X^{}_{\! \cA} \bigr)$ and analogously for
$X^{}_{\cB\al\cA}$.  So, we have $\cA \ts X^{}_{\cB} = \cA\al \cB$,
which allows us to work on the level of formal sums to determine the
commutator relations between the $X_{\cB}$ as follows.

\begin{prop}\label{prop:Lie}
  Let\/ $S = \{ 1, 2, \ldots , n \}$ be fixed. Then, the recombination
  Markov generators\/ $X^{}_{\!\cA}$ with\/
  $\pmax \ne \cA \in \cP (S)$ satisfy the commutation relations
\[   
     [X^{}_{\cB} , X^{}_{\cC} ] \, = \, X^{}_{\cB} - X^{}_{\cC}  
           + X^{}_{\cB \al \cC} - X^{}_{\cC \al\cB}
\]     
with the interpretation of the last two terms according to
\eqref{eq:arrow} and \eqref{eq:formal-sum}.
\end{prop}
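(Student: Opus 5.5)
The plan is to verify the identity row by row. Since the unit row vectors $\cA$ with $\cA\in\cP(S)$ form a basis, it is enough to show
\[
\cA\,[X_{\cB},X_{\cC}] \,=\, \cA X_{\cB} - \cA X_{\cC} + \cA X_{\cB\al\cC} - \cA X_{\cC\al\cB}
\]
for every $\cA\in\cP(S)$. The tool throughout is the formula $\cA X_{\cD} = \cA\al\cD$, extended linearly to formal sums via \eqref{eq:formal-sum}, together with the convention $X_{\pmax}=0$, which is consistent with $\cA\al\pmax=0$.

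First I would expand $\cA X_{\cB} X_{\cC}$. By \eqref{eq:arrow}, $\cA X_{\cB} = \sum_{A\in\cA}\bigl((\cA\sm A)\sqcup\cB|_A - \cA\bigr)$. Applying $X_{\cC}$ to the partition $(\cA\sm A)\sqcup\cB|_A$ gives a sum over its parts, which I would split into two groups:
\begin{enumerate}
\item[(i)] parts $A'\in\cA\sm A$, contributing $(\cA\sm\{A,A'\})\sqcup\cB|_A\sqcup\cC|_{A'} - (\cA\sm A)\sqcup\cB|_A$;
\item[(ii)] parts $B'\in\cB|_A$, contributing $(\cA\sm A)\sqcup(\cB|_A\sm B')\sqcup\cC|_{B'} - (\cA\sm A)\sqcup\cB|_A$.
\end{enumerate}
The remaining term is $-\cA X_{\cC}$, taken once for each $A\in\cA$.

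In group (i), the terms where two distinct parts are refined, one by $\cB$ and one by $\cC$, are symmetric under $\cB\leftrightarrow\cC$ and therefore cancel in the commutator. What survives from (i) are the subtracted pieces, which are multiples of $(\cA\sm A)\sqcup\cB|_A$ and its $\cC$-analogue.

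For group (ii) I would invoke Lemma~\ref{lem:technical}, taking $\cA$, $\cB$, $\cC$ as in the lemma. It converts the sum over $B'\in\cB|_A$ into the sum over $B\in\cB$ of $(\cA\sm A)\sqcup\bigl((\cB\sm B)\sqcup\cC|_B\bigr)|_A - (\cA\sm A)\sqcup\cB|_A$. Next I would compute $\cA X_{\cB\al\cC}$ directly from linearity:
\[
\cA X_{\cB\al\cC} \,=\, \sum_{B\in\cB}\bigl(\cA\al((\cB\sm B)\sqcup\cC|_B) - \cA\al\cB\bigr)
\,=\, \sum_{B\in\cB}\sum_{A\in\cA}\bigl((\cA\sm A)\sqcup((\cB\sm B)\sqcup\cC|_B)|_A - (\cA\sm A)\sqcup\cB|_A\bigr).
\]
In this expression the $-\cA$ terms cancel between the two arrow expansions. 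Summed over $A$, it is exactly the rewritten group (ii). The $\cC\leftrightarrow\cB$ version gives $\cA X_{\cC\al\cB}$ in the same way.

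The step I expect to be the main obstacle is the bookkeeping of all leftover ``diagonal'' terms, which must reduce to precisely $\cA\al\cB - \cA\al\cC = \cA X_{\cB}-\cA X_{\cC}$. These leftovers are the subtracted pieces from (i), carrying the multiplicity $|\cA|-1$, the terms $-\sum_{A}\cA X_{\cC}$, which give $-|\cA|\,(\cA\al\cC)$, and their mirror images. Group (ii) is already exactly matched by $\cA X_{\cB\al\cC}$ through Lemma~\ref{lem:technical}, so nothing further comes from there. I would handle this by writing each leftover as a multiple of $\sum_A(\cA\sm A)\sqcup\cB|_A$ or of $\cA$, and checking that the coefficients combine to $1$. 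The cases $\cB|_A=\{A\}$ and $\cB=\pmax$ need a separate check. As a sanity check I would compare with the four-site relations in \eqref{eq:Lie-4}, for instance $[X_{1|2|3|4},X_{ij|k\ell}]$, to confirm the signs.
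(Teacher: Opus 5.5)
Your proposal is correct and follows essentially the same route as the paper: act on each basis row $\cA$, expand $(\cA\al\cB)\al\cC-(\cA\al\cC)\al\cB$, cancel the symmetric two-part refinements, and match the within-part terms to $\cA X_{\cB\al\cC}-\cA X_{\cC\al\cB}$ via Lemma~\ref{lem:technical}. The bookkeeping you flag works out as you expect: $\sum_{A\in\cA}\bigl[(\cA\sm A)\sqcup\cB|_A-(\cA\sm A)\sqcup\cC|_A\bigr]=\cA\al\cB-\cA\al\cC$, so the leftover coefficient is $|\cA|-(|\cA|-1)=1$.
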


\begin{proof}
  The identity holds if, for every basis vector $\cA\in\cP(S)$, both
  sides act equally on it (to the left). Since
  $\cA \ts X^{}_{\cB} = \cA\al\cB$, we thus have to show that
\begin{equation}\label{eq:to-show}
    (\cA\al\cB)\al\cC - (\cA\al\cC)\al\cB \, = \, \cA\al
    (\cB + \cB \al \cC - \cC - \cC \al \cB)
\end{equation}
holds for all $\cA\in\cP(S)$ and all
$\cB , \cC \in \cP(S)\setminus \pmax$, where it is important to note
that the action of $\al$ it \emph{not} associative. The left-hand side
(LHS) of \eqref{eq:to-show} evaluates as
\begin{align*}
\allowdisplaybreaks
  \text{LHS} \, & = \sum_{A\in\cA} 
       \bigl[ (\cA \sm A)\sqcup \cB |_A - \cA \ts \bigr]\al \cC
       - \bigl[ (\cA \sm A)\sqcup \cC |_A -\cA \ts \bigr] \al \cB \\[1mm]
      & =   \sum_{A\in\cA}  \bigl( \cA \al \cB - \cA\al\cC \bigr) \\
      & \quad \; +
      \sum_{A\in\cA} \Bigl( \sum_{\substack{A' \in\cA \\ A' \ne A}}
         \bigl[ (\cA\sm A \sm A')\sqcup \cB |_A \sqcup \cC |_{A'} 
                  - (\cA \sm A) \sqcup \cB |_A \bigr]  \\
      & \qquad\qquad + \sum_{B\in \cB |_A} \bigl[ (\cA \sm A) 
                  \sqcup (\cB |_A \sm B) \sqcup \cC |_B
                  - (\cA \sm A) \sqcup \cB |_A \bigr] \Bigr) \\
        & \quad \; -      
        \sum_{A\in\cA} \Bigl( \sum_{\substack{A' \in\cA \\ A' \ne A}}
          \bigl[ (\cA\sm A \sm A')\sqcup \cC |_A \sqcup \cB |_{A'} 
                  - (\cA \sm A) \sqcup \cC |_A \bigr]  \\
        & \qquad\qquad + \sum_{C\in \cC |_A} \bigl[ (\cA \sm A) 
                  \sqcup (\cC |_A \sm C) \sqcup \cB |_C
                  - (\cA \sm A) \sqcup \cC |_A \bigr] \Bigr)   .
\end{align*}     
Observing that the first contributions to both double sums contain
only terms that are symmetric in $A$ and $A'$ and thus cancel each
other, the LHS simplifies to
\begin{align*}
\allowdisplaybreaks
  \text{LHS} \, & = \, \lvert \cA \rvert
         \bigl( \cA \al \cB - \cA \al \cC \bigr) \\
  & \quad \; + \sum_{A\in\cA} \Bigl( \bigl( \lvert\cA\rvert - 1 \bigr)
      \bigl[ (\cA\sm A) \sqcup \cC |_A - (\cA\sm A) \sqcup \cB |_A \bigr] \\
  & \qquad \qquad + \sum_{B\in \cB |_A}   \bigl[ (\cA\sm A) \sqcup 
      (\cB |_A \sm B) \sqcup \cC |_B  - (\cA\sm A) \sqcup \cB |_A \bigr] \\
  & \qquad \qquad - \sum_{C\in \cC |_A}   \bigl[ (\cA\sm A) \sqcup 
    (\cC |_A \sm C) \sqcup \cB |_C  - (\cA\sm A) \sqcup \cC |_A \bigr]
       \Bigr) \\
  & = \, \cA \al \cB - \cA \al \cC \\
  & \quad \; + \sum_{A\in\cA} \sum_{B' \in \cB |_A} \bigl[ (\cA\sm A) \sqcup 
    (\cB |_A \sm B' ) \sqcup \cC |_{B'}  - (\cA\sm A) \sqcup \cB |_A \bigr] \\
  & \quad \; - \sum_{A\in\cA} \sum_{C' \in \cC |_A}  \bigl[ (\cA\sm A) \sqcup 
    (\cC |_A \sm C') \sqcup \cB |_{C'}  - (\cA\sm A) \sqcup \cC |_A \bigr]  ,   
\end{align*}
because the first two lines simplify as shown, and produce two terms
that also appear on the RHS. We thus only need to look at
\begin{align*}
\allowdisplaybreaks
  \cA \al \bigl( \cB \al \cC - \cC \al \cB \bigr) \, & = \,  \cA \al
    \Bigl( \sum_{B\in \cB} \bigl[ (\cB \sm B) \sqcup \cC |_B - \cB \bigr]
    - \sum_{C\in\cC} \bigl[ (\cC \sm C) \sqcup \cB |_C - \cC\bigr] \Bigr)
        \\[1mm]
    & = \sum_{A\in\cA} \sum_{B\in\cB} \bigl[ (\cA\sm A) \sqcup 
      \bigl( (\cB \sm B) \sqcup \cC |_B \bigr) |_A - (\cA\sm A)
         \sqcup \cB |_A\bigr]  \\[1mm]
    & \quad \; - \sum_{A\in\cA} \sum_{C\in \cC} \bigl[ (\cA\sm A) \sqcup 
      \bigl( (\cC \sm C) \sqcup \cB |_C \bigr) |_A - (\cA\sm A)
          \sqcup \cC |_A\bigr] .
\end{align*}
So, our claim follows when
\[
\begin{split}
  \sum_{B\in\cB} & \bigl[ (\cA\sm A) \sqcup \bigl( (\cB \sm B) \sqcup
     \cC |_B \bigr) |_A - (\cA\sm A) \sqcup \cB |_A \bigr] \\
  & = \sum_{B' \in \cB |_A} \bigl[ (\cA\sm A) \sqcup (\cB |_A \sm B') 
     \sqcup \cC |_{B'} - (\cA \sm A) \sqcup \cB |_A \bigr]
\end{split}  
\]
holds for all $A\in\cA$, and analogously for the second sum. But these
identities are the ones from Lemma~\ref{lem:technical}, and the
commutator identities hold as claimed.
\end{proof}

Observe that the generators $X^{}_{\! \cA}$ with
$\pmax \ne \cA \in \cP(S)$ span a real vector space, which is a
subspace of $\Mat(\dm,\RR)$ of dimension $\dm - 1$. The 
generators are linearly independent due to the
structure of the first row of the generators.  Now,
Proposition~\ref{prop:Lie} in conjunction with the bilinear extension
\eqref{eq:lin-extend} implies the following important result.

\begin{theorem}\label{thm:Lie}
  Let\/ $S = \{ 1, 2, \ldots , n \}$ with\/ $n\in\NN$ be fixed. Then,
  the recombination Markov generators\/ $X^{}_{\!\cA}$ with\/
  $\pmax \ne \cA \in \cP (S)$ span a real matrix Lie algebra of
  dimension\/ $\bell_n - 1$, where\/ $\bell_n$ is the\/ $n$-th Bell
  number.  \qed
\end{theorem}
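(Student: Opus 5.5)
The plan is to assemble two ingredients that are already in place: linear independence of the $X^{}_{\!\cA}$ gives the dimension, and closure of their span under the commutator follows from Proposition~\ref{prop:Lie}. Let
\[
  \mathfrak{L} \defeq \langle X^{}_{\!\cA} : \pmax \ne \cA \in \cP(S) \rangle^{}_{\RR} \subseteq \Mat(\dm,\RR),
  \qquad \dm = \bell_n .
\]
For the dimension, I would note that the first row of $X^{}_{\!\cA}$ is $\cA - \pmax$ in the row-vector notation, which follows from \eqref{eq:arrow} with $\pmax\al\cA = \cA - \pmax$. Hence a vanishing linear combination $\sum_{\cA} c^{}_{\!\cA} X^{}_{\!\cA} = 0$ has first row $\sum_{\cA} c^{}_{\!\cA} (\cA - \pmax)$. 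Reading off the $\cA$-coordinate for each $\cA \ne \pmax$ forces $c^{}_{\!\cA}=0$. So the $\bell_n-1$ generators are linearly independent, and $\dim \mathfrak{L} = \bell_n - 1$.

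For closure, the commutator is bilinear, so by \eqref{eq:lin-extend} it suffices to check that $[X^{}_{\cB},X^{}_{\cC}] \in \mathfrak{L}$ for all $\cB,\cC \ne \pmax$. Proposition~\ref{prop:Lie} expresses this bracket as
\[
  X^{}_{\cB} - X^{}_{\cC} + X^{}_{\cB\al\cC} - X^{}_{\cC\al\cB} .
\]
The only point needing care is that $X^{}_{\cB\al\cC}$, defined through \eqref{eq:formal-sum}, really lies in $\mathfrak{L}$. The formal sum $\cB\al\cC$ is an integer combination of partitions, and some of them may equal $\pmax$. However, every partition that occurs is either $\cB$ or a refinement $(\cB\sm B)\sqcup\cC|_B \preccurlyeq \cB \prec \pmax$, so $\pmax$ never appears. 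Even if it did, the convention $X^{}_{\pmax}=0$ (from $\cA X^{}_{\pmax}=0$) would make such a term harmless. Thus each term is a finite integer combination of basis elements, and $\mathfrak{L}$ is closed under $[\cdot,\cdot]$.

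A linear subspace of $\Mat(\dm,\RR)$ that is closed under the matrix commutator is automatically a real Lie algebra, since antisymmetry and the Jacobi identity are inherited from the associative matrix product. This completes the argument. I expect no real obstacle here, because all the combinatorial work is contained in Proposition~\ref{prop:Lie} via Lemma~\ref{lem:technical}. The only mildly delicate point is the bookkeeping of formal sums and the convention $X^{}_{\pmax}=0$.
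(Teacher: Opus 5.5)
Your proposal is correct and follows the paper's own argument. Linear independence of the $X_{\cA}$, read off from their first rows $\cA - \pmax$, gives dimension $\bell_n - 1$, and closure under the commutator follows from Proposition~\ref{prop:Lie} together with the bilinear extension \eqref{eq:lin-extend}. Your extra check that $\pmax$ never occurs in the formal sum $\cB\al\cC$ (all its terms are $\preccurlyeq \cB \prec \pmax$) is a detail the paper leaves implicit, and it is right.
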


\begin{remark}
  Let us emphasise that we started from a well-established model of
  population genetics, and were led to consider commutators, because
  the recombination Markov generators do \emph{not} form an algebra
  under matrix multiplication (for $n\geqslant 4$). While
  Lie-algebraic structures in models of genetics have been considered,
  see \cite{JS} and references therein, many models also form matrix
  algebras, which is certainly the case in phylogenetics. Still,
  Lie-algebraic techniques have been used, but were then less
  essential. Here, we have one of the first models where the Lie
  algebra structure is essential due to the absence of any matrix
  algebra.  \exend
\end{remark}

The marginalisation structure is reflected in a hierarchy of
subspaces, on which the Markov matrices and generators act via tensor
products. We next describe this for the generators, and get the
corresponding structure for the Markov matrices via a matrix
exponential.

\subsection{Tensor product structure}

If $\cA = \{ A_1, \ldots , A_k \}$, we interpret the corresponding
unit vector as a tensor product, namely
$\cA = \{ A_1 \} \otimes \{ A_2 \} \otimes \cdots \otimes \{ A_k \}$
in our partition-indexed vector notation, where
$\{ A_i \} = \pmax^{\nts A_i} \in \cP (A_i)$.  Here, we apply the
implicit (lexicographic) ordering along the parts in increasing
length.  Then, we can write
\[
    \cA  \ts X^{}_{\cB} \, = \sum_{i=1}^{k} \{ A_1\} \otimes \cdots \otimes
    \{ A_{i-1} \} \otimes \{ A_i\} X^{}_{\cB |_{A_i} } \! \otimes \{ A_{i+1} \}
    \otimes \cdots \otimes \{A_k \}
\]
where
$\{ A_i \} \ts X^{}_{\cB |_{A_i}} \! = \ts \cB |_{A_i} - \{ A_i\}$,
and thus
\[
   \cA \ts X^{}_{\cB} \, = \, \cA \sum_{i=1}^{k}
   \one \otimes \cdots \otimes \one \otimes X^{}_{\cB |_{A_i}} \!
   \otimes \one \otimes \cdots \otimes \one \ts .
\]
In fact, for any $\cC \preccurlyeq \cA$, we have
$\cC = \cC |_{A_1} \otimes \cdots \otimes \cC |_{A_k}$ and
$X^{}_{\cB}$ has the corresponding action on this product. Thus, if we
consider the subspace
$V^{\nts \cA} \defeq \langle \cC : \cC \preccurlyeq \cA
\rangle^{}_{\RR}$ and set
\begin{equation}\label{eq:tensor-Q}
    X^{\nts\cA}_{\cB} \, \defeq \sum_{i=1}^{k}
    \one \otimes \cdots \otimes \one \otimes X^{}_{\cB |_{A_i}} \!
    \otimes \one \otimes \cdots \otimes \one \ts ,
\end{equation}
we see that $X^{\nts \cA}_{\cB }$ acts on $V^{\nts \cA}$. In
particular, for any $\cC\preccurlyeq\cA$, one has
$\cC \ts X^{}_{\cB} = \cC \ts X^{\nts \cA}_{\cB}$. The definition from
\eqref{eq:tensor-Q} behaves well under addition and scalar
multiplication, via
$X^{\nts \cA}_{\alpha \cB + \beta \cC} = \alpha X^{\nts \cA}_{\cB} +
\beta X^{\nts \cA}_{\ts\cC}$. In fact, for any
$\varnothing\ne A\in\cA$, one also has
\[
    [ X^{}_{\cB |_{A}} , X^{}_{\cC |_{A}} ] \, = \, 
       [ X^{}_{\cB} , X^{}_{\cC} ] \big|_{A} ,
\]
because this boils down to the validity of
$(\cB \al \cC)|_A = \cB |_A \al \cC |_A$, which is nothing but the
identity from Corollary~\ref{coro:technical}. Applying this to all
$A = A_i$ with $1\leqslant i \leqslant k$, and using the commutativity
of the summands in \eqref{eq:tensor-Q}, one obtains the relation
\[
    [ X^{\nts \cA}_{\cB} , X^{\nts \cA}_{\cC} ] \, = \, 
      [ X^{}_{\cB} , X^{}_{\cC} ]^{\cA} .
\]
We have thus derived the following result.

\begin{prop}
  For any fixed\/ $\cA \in \cP (S)$, the mapping\/
  $X^{}_{\cB} \mapsto X^{\nts \cA}_{\cB}$ together with its\/
  $\RR$-linear extension defines a Lie algebra homomorphism.  \qed
\end{prop}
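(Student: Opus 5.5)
The map $\Phi^{\cA}\colon X^{}_{\cB} \mapsto X^{\nts\cA}_{\cB}$ is defined on the basis $\{X^{}_{\cB} : \pmax \ne \cB \in \cP(S)\}$ of the Lie algebra from Theorem~\ref{thm:Lie}. Since this is a basis, $\Phi^{\cA}$ has a unique $\RR$-linear extension. By \eqref{eq:formal-sum} and the linearity of \eqref{eq:tensor-Q} in the subscript, $X^{\nts\cA}_{\alpha\cB+\beta\cC} = \alpha X^{\nts\cA}_{\cB} + \beta X^{\nts\cA}_{\cC}$, so the extension agrees with the definition of $X^{\nts\cA}_{\cD}$ for any formal sum $\cD$. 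In particular it is compatible with the notation $X^{}_{\cB\al\cC}$. By bilinearity of both brackets, as in \eqref{eq:lin-extend}, it then suffices to check
\[
  [ X^{\nts\cA}_{\cB}, X^{\nts\cA}_{\cC} ] \, = \, \Phi^{\cA} \bigl( [X^{}_{\cB}, X^{}_{\cC}] \bigr)
\]
for basis elements $\cB,\cC \ne \pmax$.

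\textbf{Step 1: the right-hand side.} Proposition~\ref{prop:Lie} expresses $[X^{}_{\cB},X^{}_{\cC}]$ as $X^{}_{\cB} - X^{}_{\cC} + X^{}_{\cB\al\cC} - X^{}_{\cC\al\cB}$. Applying $\Phi^{\cA}$ and using \eqref{eq:tensor-Q} termwise gives
\[
  \sum_{i=1}^{k} \one\otimes\cdots\otimes \bigl( X^{}_{\cB|_{A_i}} - X^{}_{\cC|_{A_i}} + X^{}_{(\cB\al\cC)|_{A_i}} - X^{}_{(\cC\al\cB)|_{A_i}} \bigr) \otimes\cdots\otimes\one .
\]
Here restriction of formal sums is taken linearly. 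Terms $X_{\pmax^{A_i}}$ can arise; they vanish, consistent with $X_{\pmax}=0$.

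\textbf{Step 2: the left-hand side.} The bracket of the two sums in \eqref{eq:tensor-Q} is computed as follows. Summands acting on different tensor factors $i\ne j$ commute, because they are of the form $\one\otimes\cdots\otimes Y\otimes\cdots\otimes Z\otimes\cdots$ in either order. Only the diagonal terms survive, so
\[
  [ X^{\nts\cA}_{\cB}, X^{\nts\cA}_{\cC} ] \, = \, \sum_{i=1}^{k} \one\otimes\cdots\otimes [X^{}_{\cB|_{A_i}}, X^{}_{\cC|_{A_i}}] \otimes\cdots\otimes\one .
\]
We then apply Proposition~\ref{prop:Lie} on the set $A_i$ in place of $S$. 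This gives $[X^{}_{\cB|_{A_i}}, X^{}_{\cC|_{A_i}}] = X^{}_{\cB|_{A_i}} - X^{}_{\cC|_{A_i}} + X^{}_{\cB|_{A_i}\al\cC|_{A_i}} - X^{}_{\cC|_{A_i}\al\cB|_{A_i}}$. If $\cB|_{A_i}$ or $\cC|_{A_i}$ equals $\pmax^{A_i}$, the identity is checked trivially, since then both sides reduce consistently using $\cD\al\pmax = 0$ and $\pmax\al\cD = \cD - \pmax$.

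\textbf{Step 3: matching, the main step.} Comparing Steps 1 and 2 factor by factor, it remains to show
\[
  (\cB\al\cC)|_{A} \, = \, \cB|_{A} \al \cC|_{A}
\]
as formal sums, for each $A = A_i$. Expanding with \eqref{eq:arrow}, the left side is $\sum_{B\in\cB}\bigl[((\cB\sm B)\sqcup\cC|_B)|_A - \cB|_A\bigr]$. The right side is $\sum_{B'\in\cB|_A}\bigl[(\cB|_A\sm B')\sqcup\cC|_{B'} - \cB|_A\bigr]$, where one uses $(\cC|_A)|_{B'} = \cC|_{B'}$ for $B'\subseteq A$. This is exactly Corollary~\ref{coro:technical}.

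I expect the main obstacle to be bookkeeping rather than substance. One must make sure that restriction of formal sums commutes with the linear extension $X_{\cdot}$. One must also handle the degenerate restrictions to $\pmax^{A_i}$, and the terms where $B\cap A=\varnothing$. The latter contribute zero on the left, as already shown in the proof of Lemma~\ref{lem:technical}.
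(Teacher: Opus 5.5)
Your proposal is correct and follows essentially the same route as the paper. It uses linearity of $\cB\mapsto X^{\nts\cA}_{\cB}$ and the commutativity of the summands in \eqref{eq:tensor-Q} acting on different tensor factors, and it reduces $[X^{}_{\cB|_{A}},X^{}_{\cC|_{A}}]=[X^{}_{\cB},X^{}_{\cC}]|_{A}$ via Proposition~\ref{prop:Lie} to the identity $(\cB\al\cC)|_A=\cB|_A\al\cC|_A$ from Corollary~\ref{coro:technical}. Your treatment of restrictions that collapse to $\pmax^{\nts A_i}$ (via $X_{\pmax}=0$, $\cD\al\pmax=0$ and $\pmax\al\cD=\cD-\pmax$) only makes explicit bookkeeping the paper leaves implicit.
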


\begin{remark}\label{rem:single-exp}
  Let us note that the tensor product structure in \eqref{eq:tensor-Q}
  is preserved under the exponential map, where the standard relation
\[
   \exp ( \one \otimes \dots \otimes \one \otimes 
   X^{}_{\cB |_{A_i}} \! \otimes \one  \otimes \dots \otimes \one ) 
   \, = \, \one \otimes \dots \otimes \one \otimes
  \exp ( X_{\cB |_{A_i}}) \otimes \one \otimes \dots \otimes \one
\]
  together with the mutual commutativity of the summands in 
  \eqref{eq:tensor-Q} implies the identity
\[
    \exp ( X^{\nts \cA}_{\cB} ) \, = \,
        \bigotimes_{i=1}^{k} \exp \bigl( X^{}_{\cB |_{A_i}}\bigr) ,
\]
and the analogous one for any linear combinations of the
$X_{\cB}$. Indeed, the exponential of the sum thus is a product of
exponentials, each of which differs from $\one$ only at position $i$,
so that the factors give a single tensor product as shown. With
hindsight, this shows how the marginalisation structure
\eqref{eq:Q-marg} of generators gives rise to condition
\eqref{eq:M-marg} for Markov matrices, thus reversing the tangent
space argument from Section~\ref{sec:prelim}.  \exend
\end{remark}

Looking back at Section~\ref{sec:reco-part}, one can see the product
structure as follows. Let $Q$ be an RRS Markov generator for $n$
sites, and fix some $\cA = \{ A_1, A_2, \ldots , A_m \} \in \cP
(S)$. Now, let $Q\big|_{U}$ denote the restriction of $Q$ to
$U \subseteq S$, as defined by the marginal rates $\varrho^{U}_{\cD}$
from \eqref{eq:marg-rates} with $\cD \in \cP (U)$, and consider the
induced generator
$Q^{\cA} \defeq \bigl( Q_{\cB \cC}\bigr)_{\cB, \cC \preccurlyeq \cA}$.
Then, since $Q^{\cA}\big|_{A_i} = Q\big|_{A_i}$ holds for all
$1\leqslant i \leqslant m$, the statement of Eq.~\eqref{eq:Q-marg} can
be reformulated as
\[
  Q^{\cA} \, = \ts \sum_{i=1}^{m} \one \otimes \dots \otimes
  \one \otimes Q\big|_{A_i} \! \otimes \one \otimes \dots
  \otimes \one \ts ,
\]
which also implies
$\exp \bigl(Q^{\cA} \bigr) = \bigotimes_{i=1}^{m} \exp \bigl(
Q\big|_{A_i} \bigr)$. This is fully consistent with the product
structure of the RMS Markov matrices. Indeed, if $M\big|_{U}$ is the
restiction of $M$ to $U$ as defined via the marginal probabilities
$r^{U}_{\cD}$ from \eqref{eq:marg-prob} with $\cD\in\cP(U)$,
Eq.~\eqref{eq:M-marg} means nothing but
$M^{\cA} = \bigotimes_{i=1}^{m} M\big|_{A_i}$, again due to
$M^{\cA}\big|_{A_i} = M\big|_{A_i}$ for all $i$, so also
$\exp(Q)^{\cA} \big|_{A_i} = \exp (Q) \big|_{A_i}$.

\subsection{Embedding conditions}\label{sec:gen-emb-cond}

To approach the embedding problem, we will first make use of the
triangular form of the matrices and employ the results from
\cite{tle}, adapted to our setting with the lattice of partitions,
which simplifies several of the sums occurring. For the convenience of
the reader, we will recall all relevant formulas, but refer to
\cite{tle} for the proofs.

So, let $M$ be a recombination Markov matrix with marginalisation
structure \eqref{eq:M-marg}, and assume that $M$ has simple
spectrum. Then, it is diagonalisable, and can be written as
\[
    M \, = \, T  D \, T^{-1} ,
\]
where $D$ is a diagonal matrix that agrees with the diagonal of $M$.
Note that $T = (\tau^{}_{\!\cA \cB})^{}_{\cA, \cB \in \cP(S)}$
columnwise contains the right eigenvectors of $M$, while the rows of
$T^{-1}$ are the left eigenvectors. Both $T$ and $T^{-1}$ are upper
triangular. We can assume them to be real because all eigenvalues
$\lambda^{}_{\cC} = M^{}_{\cC \cC}$ of $M$ are real. Now, we have
\[
  M^{}_{\! \cA \ts \cB} \, = \! \sum_{\cB \preccurlyeq \udo{\cC} \preccurlyeq \cA}
  \! \tau^{}_{\!\cA \ts \cC} \ts \tilde{\tau}^{}_{\ts \cC \cB} \, \lambda^{}_{\cC}
\]
for $\cB \preccurlyeq \cA$, and $M^{}_{\! \cA\ts \cB}=0$ otherwise.
Here, $\tau^{}_{\!\cA \ts \cB}$ and $\tilde{\tau}^{}_{\ts \cC\cD}$ denote
the elements of $T$ and $T^{-1}$, respectively. Since each right
eigenvector is unique up to an overall non-zero factor, which is then
compensated for in the left eigenvectors by the reciprocal of this
factor, we set
\begin{equation}\label{eq:def-theta}
  \vartheta^{}_{\!\cA} (\cC, \cB) \, \defeq \,
   \tau^{}_{\!\cA \ts \cC} \ts \tilde{\tau}^{}_{\ts \cC \cB} \ts ,
\end{equation}
which is blind to this freedom in the choice of $T\nts$. We then have
$M^{}_{\! \cA\ts \cB} = \sum_{\cB \preccurlyeq \udo{\cC} \preccurlyeq
  \cA} \vartheta^{}_{\!\cA} (\cC, \cB) \, \lambda^{}_{\cC}$ together
with
\begin{equation}\label{eq:magic-0}
  \sum_{\cA \succcurlyeq \udo{\cC} \succcurlyeq\cB}
  \! \vartheta^{}_{\!\cA} (\cC, \cB) \, = \, \delta^{}_{\! \cA \ts \cB}
  \quad \text{ and} \quad
  \sum_{\cC \succcurlyeq \udo{\cE} \succcurlyeq\cD} \!
  \vartheta^{}_{\! \cA} (\cC, \cE) \, \vartheta^{}_{\cE} (\cD,\cB)
  \, = \, \delta^{}_{\cC \cD} \, \vartheta^{}_{\! \cA} (\cC, \cB)
\end{equation}
for all $\cA, \cB \in \cP (S)$ with $\cA \succcurlyeq \cB$ in the
first relation and all $\cA, \cB, \cC, \cD \in \cP (S)$ subject to
$\cA \succcurlyeq \cC \succcurlyeq \cD \succcurlyeq \cB$ in the
second. Note that the first is a consequence of $T \ts\ts T^{-1} = \one$,
while the second follows easily from $T^{-1} T = \one$. Note also that
$\vartheta^{}_{\!\cA} (\cC, \cB) = 0$ whenever
$\cA \not \succcurlyeq \cC$ or $\cC \not \succcurlyeq \cB$, due to the
upper-triangular nature of $T$ and ${T^{-1}\!}$. The latter also
implies that we have $\vartheta^{}_{\!\cA} (\cA, \cA) = 1$ for all
$\cA \in \cP (S)$.

The $\vartheta$-coefficients satisfy another identity, namely
\begin{equation}\label{eq:magic-1}
  \sum_{\cA \succcurlyeq \udo{\cD} \succcurlyeq \cC} \!
  M^{}_{\!\cA\ts\cD} \, \vartheta^{}_{\cD} (\cC, \cB)
  \, = \, \lambda^{}_{\cC} \, \vartheta^{}_{\!\cA} (\cC , \cB) 
\end{equation}
for all $\cA, \cB, \cC \in \cP (S)$ with
$\cA \succcurlyeq \cC \succcurlyeq \cB$, which follows from
\cite[Lemma~2.1]{tle}.  Since $M$ has simple spectrum by assumption,
Eq.~\eqref{eq:magic-1} can be solved for
$\vartheta^{}_{\! \cA} (\cC , \cB)$ with $\cA \succ \cC$ to obtain
\begin{equation}\label{eq:magic-2}
  \vartheta^{}_{\!\cA} (\cC, \cB) \, = \,
  \myfrac{1}{\lambda^{}_{\cC} - \lambda^{}_{\!\cA}}
  \sum_{\cA \succ \udo{\cD} \succcurlyeq \cC} \!
   M^{}_{\!\cA\ts\cD} \, \vartheta^{}_{\cD} (\cC, \cB) \ts ,
\end{equation}
which leads to the following result.

\begin{lemma}\label{lem:theta}
  Let\/ $M$ be a recombination Markov matrix with simple spectrum.
  Then, the\/ $\vartheta$-coefficients from \eqref{eq:def-theta} are
  unique, and the non-zero ones can be computed recursively via
\[
  \vartheta^{}_{\!\cA} (\cC, \cB) \, = \, \delta^{}_{\! \cA \ts \cC}
  \Bigl( \delta^{}_{\! \cA \ts \cB} \, - \!
      \sum_{\cA \succ \udo{\cE} \succcurlyeq \cB}
         \! \vartheta^{}_{\!\cA} (\cE, \cB) \Bigr)
         + \myfrac{1}{\lambda^{}_{\cC} - \lambda^{}_{\!\cA}}
      \sum_{\cA \succ \udo{\cD} \succcurlyeq \cC} \! M^{}_{\!\cA\ts\cD}
         \, \vartheta^{}_{\cD} (\cC , \cB) \ts ,
\]
with empty sums understood to be $0$.  This encodes a complete
recursion of the\/ $\vartheta^{}_{\!\cA} (\cC, \cB)$ for\/
$\cA \succcurlyeq \cC \succcurlyeq \cB \succcurlyeq \pmin\ts$, with
the initial conditions\/ $\vartheta^{}_{\!\cA} (\cA, \cA) = 1$ for\/
$\cA \in \cP (S)$.
\end{lemma}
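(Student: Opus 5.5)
The plan is to derive the recursion from the two identities already established, \eqref{eq:magic-0} and \eqref{eq:magic-2}, and to read off uniqueness from the fact that the recursion determines everything. I will separate the cases $\cC \ne \cA$ and $\cC = \cA$; the claimed formula merges them through the factor $\delta^{}_{\!\cA\ts\cC}$.

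First, consider $\cA \succ \cC$. Then the first term of the formula vanishes. The second term is exactly \eqref{eq:magic-2}, which is legitimate because simple spectrum gives $\lambda^{}_{\cC} \ne \lambda^{}_{\!\cA}$. \eqref{eq:magic-2} itself comes from \eqref{eq:magic-1}: split off the summand $\cD = \cA$, which contributes $M^{}_{\!\cA\ts\cA}\,\vartheta^{}_{\!\cA}(\cC,\cB) = \lambda^{}_{\!\cA}\,\vartheta^{}_{\!\cA}(\cC,\cB)$, and solve for $\vartheta^{}_{\!\cA}(\cC,\cB)$.

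Next, consider $\cC = \cA$. The sum in the second term runs over $\cA \succ \cD \succcurlyeq \cA$, so it is empty and the term is $0$. The first relation of \eqref{eq:magic-0} gives $\vartheta^{}_{\!\cA}(\cA,\cB) = \delta^{}_{\!\cA\ts\cB} - \sum_{\cA\succ\cE\succcurlyeq\cB}\vartheta^{}_{\!\cA}(\cE,\cB)$, which is the claimed expression. For $\cB = \cA$ this reproduces $\vartheta^{}_{\!\cA}(\cA,\cA)=1$, matching the initial condition. All $\vartheta$ with $\cA \not\succcurlyeq \cC$ or $\cC \not\succcurlyeq \cB$ vanish by triangularity, so only the range $\cA\succcurlyeq\cC\succcurlyeq\cB$ is relevant.

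The main point to check is that this really is a well-founded recursion. I would order the triples $(\cA,\cC,\cB)$ by the length of the interval $[\cC,\cA]$ in the partition lattice.
\begin{itemize}
\item For $\cC \prec \cA$, the right-hand side involves only $\vartheta^{}_{\cD}(\cC,\cB)$ with $\cD \prec \cA$, so the interval $[\cC,\cD]$ is strictly shorter. These values are known by induction, with base case $\cD = \cC$.
\item For $\cC = \cA$, the right-hand side involves $\vartheta^{}_{\!\cA}(\cE,\cB)$ with $\cE \prec \cA$, which the previous step has already determined.
\end{itemize}
So for fixed $\cB$ one first computes all off-diagonal-in-$\cC$ values and then the diagonal one $\vartheta^{}_{\!\cA}(\cA,\cB)$. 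Induction on $\cA$ upward from $\cB$ is consistent with this, since computing $\vartheta^{}_{\cD}(\cC,\cB)$ with $\cD=\cC$ requires the $\cE$-values for $\cC$, which lie lower.

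Uniqueness then follows. The recursion expresses every $\vartheta$ through $M$ and its eigenvalues alone, so it does not depend on the normalisation of $T$; this is consistent with the gauge-invariance remark after \eqref{eq:def-theta}.
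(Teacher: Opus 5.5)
Your proposal is correct and matches the paper's proof: \eqref{eq:magic-2} for $\cA\succ\cC$, the first identity of \eqref{eq:magic-0} for $\cC=\cA$, and a check that the result is a complete recursion, which you spell out more explicitly than the paper does (the well-founded order is induction on $\cA$ within $[\cB,\pmax]$ for fixed $\cB$, with the values $\cC\prec\cA$ computed before $\cC=\cA$, as you say at the end, not the length of $[\cC,\cA]$ alone). The one difference is that you derive uniqueness from the recursion itself, since \eqref{eq:magic-0} and \eqref{eq:magic-1} hold for every admissible $T$, whereas the paper argues that the only freedom in $T$ is a column rescaling that cancels in \eqref{eq:def-theta}; both arguments are valid.
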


\begin{proof}
  The claimed uniqueness follows directly from the definition in
  \eqref{eq:def-theta} as indicated earlier, because $M$ having simple
  spectrum means that any (non-zero) factor to a column of $T$, which
  is the only remaining freedom in the choice of $T$, is compensated
  by the reciprocal prefactor to the corresponding row of $T^{-1}$,
  hence cancels in the $\vartheta$-coefficients.

  Simple spectrum also means that we can use \eqref{eq:magic-2} for
  $\cA\succ\cC$. Now, we augment this with a relation for
  $\vartheta^{}_{\!\cA} (\cA, \cB)$, which we can get from the first
  identity in \eqref{eq:magic-0}. Adding the initial conditions as
  stated earlier, we obtain the claimed identity, and it is not hard to
  check that this constitutes a complete recursion for the non-trivial
  parts of the $\vartheta$-coefficients.
\end{proof}

The crucial point now is the following. When $M$ is non-singular and
has simple spectrum, it possesses a unique real matrix logarithm, $Q$
say, which also has simple spectrum and commutes with $M$, hence
possesses the \emph{same} eigenvectors as $M$. So, we can use the
$\vartheta$-coefficients of $M$ to calculate the first row of $Q$ as
\begin{equation}\label{eq:Q-condi}
  Q^{}_{\pmax \cA} \, = \sum_{\udo{\cB} \ts \succcurlyeq \cA}
  \mu^{}_{\cB} \, \vartheta^{}_{\pmax} (\cB, \cA) \, =
  \sum_{\udo{\cB} \ts \succcurlyeq \cA}
   \log (\lambda^{}_{\cB}) \, \vartheta^{}_{\pmax} (\cB, \cA) \ts .
\end{equation}
Consequently, we can express the non-negativity conditions
$Q^{}_{\pmax \cA} \geqslant 0$ via the eigenvalues of $M$, which are
positive and distinct by assumption, and the recursively determined
$\vartheta$-coefficients.

Specialising these conditions to the cases of $2$, $3$ and $4$ sites
gives the conditions derived above by direct means, where one has to
observe that we assume $M$ to have positive, simple spectrum. For $2$
and $3$ sites, the $\vartheta$-coefficients are parameter independent,
in line with the commutativity of the matrix algebra in these two
cases. However, this approach does not show which of the conditions
are satisfied automatically. In particular, observing that the
$\vartheta$-coefficients are real but signed, the interpretation of
the alternating sums seems difficult. At this point, the general
situation can be stated as follows.

\begin{theorem}
  Let\/ $M$ be a non-singular RMS Markov matrix for\/ $n$ sites, and
  set\/ $A=M\nts - \one$. Then, the following properties are
  equivalent.
\begin{enumerate}\itemsep=2pt
\item The matrix\/ $M$ is embeddable.
\item $M$ is embeddable with a Markov generator of RRS type.
\item The principal matrix logarithm $\log (\one + A)$ is
  a Markov generator.  
\end{enumerate}
The embedding is unique when no elementary Jordan block in the JNF 
of\/ $M$ occurs more than once. In particular, if\/ $M$ has simple spectrum,
$(3)$ is equivalent with the\/ $Q^{}_{\pmax \cA}$ from \eqref{eq:Q-condi} 
being non-negative for all\/ $\pmax \ne \cA \in \cP(S)$.

Further, when an RMS Markov matrix\/ $M$ with multiple elementary Jordan 
blocks is embeddable, the embedding is not unique, but the principal logarithm
of\/ $M$ is the only Markov generator of RRS type, and no other real
logarithm is upper triangular.
\end{theorem}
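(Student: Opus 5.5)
The plan is to reduce most of the statement to Theorem~\ref{thm:RMS-RRS} and Corollary~\ref{coro:RMS-RRS}, and then add the concrete form of the criterion in the simple-spectrum case. First, since $M$ is a non-singular RMS Markov matrix, Remark~\ref{rem:non-singular} gives $r(\pmax)>0$ and all eigenvalues $\lambda_{\cA}$ from \eqref{eq:eigen-M} lie in $(0,1]$. Consequently $A=M-\one$ has spectrum in $(-1,0]$, so $\varrho_A<1$. The series from Lemma~\ref{lem:real-log-exists} therefore converges to the principal logarithm $L=\log(\one+A)$; this part of the argument does not use simplicity of the spectrum. By Theorem~\ref{thm:RMS-RRS}, $L$ is upper triangular and of RRS type.

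The equivalence of (1), (2) and (3) is then read off from Corollary~\ref{coro:RMS-RRS}, whose items (1), (3), (4) are exactly ours, once $L$ is identified with $\log(\one+A)$. The implications (3)$\Rightarrow$(2)$\Rightarrow$(1) are immediate. The substantial direction (1)$\Rightarrow$(3) is the one proved before that corollary: approximate $M$ by embeddable RMS Markov matrices with simple spectrum, using Kingman and Davies, and pass to the limit of their principal logarithms. I would simply cite this.

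For uniqueness without repeated elementary Jordan blocks, I would invoke Culver's theorem (Fact~\ref{fact:Culver}). The spectrum is positive real, so the real logarithm is unique and equals $L$, hence any embedding generator is $L$. When $M$ has simple spectrum, $M=TDT^{-1}$ with upper-triangular $T$, and $L=T\log(D)T^{-1}$ has the same eigenvectors. Its first row is then given entrywise by \eqref{eq:Q-condi} through the $\vartheta$-coefficients of Lemma~\ref{lem:theta}. Since $L$ has zero row sums and is RRS, all off-diagonal entries are marginal sums \eqref{eq:marg-rates} of the $\rho(\cA)=Q_{\pmax\cA}$. So $L$ is a Markov generator if and only if $Q_{\pmax\cA}\geqslant 0$ for all $\cA\ne\pmax$.

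The last paragraph, covering repeated Jordan blocks, follows from the final part of Theorem~\ref{thm:RMS-RRS}: $L$ is the only upper-triangular real logarithm, and every RRS matrix is upper triangular. Non-uniqueness of the embedding, as opposed to mere non-uniqueness of real logarithms, is the step I expect to be the main obstacle. Culver only supplies other real logarithms, and I must check that some of them are still Markov generators. My plan is to conjugate $L$ by a one-parameter family of invertible matrices $S_t$ commuting with $M$ but not with $L$. Such a family is available because a repeated Jordan block gives a non-abelian commutant, for example rotations mixing two identical blocks. Conjugation preserves zero row sums if $S_t\mathbf 1=\mathbf 1$. For $t$ small, off-diagonal entries of $L$ that are strictly positive stay positive. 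The delicate point is the zero entries of $L$, which might become negative. I would handle this first when $L$ has strictly positive off-diagonal rates, choosing $S_t$ to act only on the generalised eigenspace of the repeated eigenvalue. In general I would fall back on the argument of \cite{Culver} and \cite{BS2} cited earlier, which shows that an uncountable family of logarithms accumulates at $L$.
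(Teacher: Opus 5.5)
Your treatment of (1)--(3), of uniqueness via Culver's theorem, of the first-row criterion \eqref{eq:Q-condi}, and of the ``only upper-triangular logarithm'' claim via Theorem~\ref{thm:RMS-RRS} is the paper's argument. Your remark that every off-diagonal entry of an RRS matrix is a sum of $\rho(\cD)$ with $\cD\neq\pmax$ usefully makes explicit why the first row suffices. The gap is your plan for proving non-uniqueness.

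Conjugating $L$ by matrices $S_t$ that commute with $M$ yields nothing new. By your own series, $L=\sum_{m\geqslant 1}(-1)^{m-1}A^m/m$ is a limit of polynomials in $M$. So every $S_t$ in the commutant of $M$, including rotations mixing identical blocks, also commutes with $L$, and $S_t L S_t^{-1}=L$. The additional real logarithms in Culver's theorem appear only after replacing $\log\lambda$ by $\log\lambda\pm 2\pi\mathrm{i}k$ with $k\neq 0$ on two copies of a repeated block. Their spectra stay at distance at least $2\pi$ from $\sigma(L)$, so they do not accumulate at $L$, and no small perturbation of $L$ reaches them. Your intended first subcase (all off-diagonal rates of $L$ strictly positive) is also empty, since the upper-triangular $L$ has zero entries below the diagonal.

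More fundamentally, the non-uniqueness claim cannot be proved, because it is false in general. Let $Q$ be a Markov generator with $\ee^Q=M$.
\begin{itemize}
\item Gershgorin's theorem gives $\lvert\mathrm{Im}\,\mu\rvert\leqslant -Q_{ii}$ for some $i$, for each eigenvalue $\mu$ of $Q$.
\item Since $\det \ee^Q = \ee^{\mathrm{tr}\,Q}$, we have $\sum_i(-Q_{ii})=-\log\det M$.
\item Any real logarithm of $M$ other than $L$ has an eigenvalue $\log\lambda+2\pi\mathrm{i}k$ with $k\neq 0$. This holds because $L$ is the only logarithm with spectrum in $\{\lvert\mathrm{Im}\,z\rvert<\pi\}$.
\item Such an eigenvalue would force some $-Q_{ii}\geqslant 2\pi$, hence $\det M\leqslant\ee^{-2\pi}$.
\end{itemize}
So once $\det M>\ee^{-2\pi}$, $L$ is the only possible generator. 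In particular, for $n\geqslant 2$, $M=\one$ is an embeddable RMS Markov matrix with $\bell_n$ copies of the $1\times 1$ Jordan block $[1]$, and its only generator is $Q=0$. The paper itself records this uniqueness in Corollary~\ref{coro:2-sites} for $a=0$.

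The paper's proof does not establish non-uniqueness either. It only argues conditionally: when further embeddings exist, none of them is upper triangular or of RRS type. Your appeal to Theorem~\ref{thm:RMS-RRS} already covers exactly that statement, so you should drop the attempt to produce a second generator.
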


\begin{proof}
  Any non-singular RMS Markov matrix satisfies
  $\sigma (M) \subset \RR_{+}$.  When the spectrum is simple or, more
  generally, when no elementary Jordan block occurs more than once,
  there is only one real matrix logarithm of $M$ by Culver's result
  (Fact~\ref{fact:Culver}). The latter is the principal matrix
  logarithm, which must be a matrix of RRS type, because there is
  always one real matrix logarithm of $M$ of this type by
  Theorem~\ref{thm:RMS-RRS}.  In this case, the equivalence of
  conditions (1), (2) and (3) is clear, and the embedding must be
  unique.
   
  When $M$ is diagonalisable, $\sigma(M)$ is free of multiplicities if
  and only if $\sigma (M)$ is simple. Then, condition (3) is indeed
  equivalent with the non-negativity of the $Q^{}_{\pmax \cA}$ from
  \eqref{eq:Q-condi}, for all $\pmax \ne \cA \in \cP (S)$, by an
  application of Lemma~\ref{lem:theta}.
   
  For the cases with degenerate spectrum, we can once again
  approximate $M$ with RMS Markov matrices with simple spectrum. Since
  the embeddable Markov matrices form a set that is relatively closed
  within the set of all Markov matrices with positive determinant, we
  find uniquely embeddable ones in any small neighbourhood of $M$, and
  use a standard limit theorem to get embeddability of $M$ also via
  its principal matrix logarithm. When further embeddings exist, we
  know from Theorem~\ref{thm:RMS-RRS} that no other one is upper
  triangular, hence also not of RRS type, and our argument is
  complete.
\end{proof}

To develop the picture further, it seems likely that one has to
investigate the algebraic structure in more detail. When $r\in\RR^d$
is a vector with row sum $1$ with positive spectrum (in the sense of
Definition~\ref{def:spec}), let $r \mapsto M(r)$ be the mapping to the
corresponding RMS matrix, which is injective. Likewise, let
$\rho \mapsto Q(\rho)$ denote the mapping from vectors with $0$ row
sum to an RRS matrix, which is injective as well. If
$\varphi : \Mat (d, \RR) \longrightarrow \RR^d$ is the projection to
the row vector that is the first row of the input matrix, we can
interpret $\rho \mapsto \varphi \bigl(\exp(Q(\rho))\bigr)$ as the
exponential map from the Lie algebra to the component of the Lie group
connected to the identity, while
$r \mapsto \varphi\bigl(\log (M(r))\bigr)$ is the matching logarithm.
Since this is a bijection, one further step could be to derive an
explicit version of these maps and study them, aiming at a better
interpretation of the embedding condition, which says that a
probability vector $p$ belongs to an embeddable RMS Markov matrix if
and only if $\rho = \log (p)$ has all entries except possibly the
first one non-negative.

In fact, by a straightforward extension of \cite{BvW14} and
\cite{BE18}, the mapping
$\rho \mapsto \varphi \bigl(\exp(Q(\rho))\bigr)$ may be constructed
explicitly via a partial tree decomposition combined with
combinatorial tools such as inclusion/exclusion and M\"{o}bius
inversion, and likewise for the mapping
$r \mapsto \varphi\bigl(\log (M(r))\bigr)$. However, due to the
alternating signs resulting from inclusion/exclusion, it is not clear
how to arrive at explicit non-negativity criteria. We thus leave this
as a challenge for future work.

\section*{Acknowledgements}

MB is grateful to the University of Tasmania in Hobart for
hospitality, where part of this work was done. We acknowledge support
by the German Research Foundation (DFG), within the CRC 1283/2
\mbox{(2021 - 317210226)} at Bielefeld University.

\end{document}